\newtheorem{theorem}{Theorem}[section]
\newtheorem{lemma}[theorem]{Lemma}
\newtheorem{prop}[theorem]{Proposition}
\newtheorem{definition}[theorem]{Definition}
\newtheorem{corollary}[theorem]{Corollary}
\theoremstyle{definition}
\newtheorem{remark}[theorem]{Remark}
\def\qed{\hfill $\square$ \goodbreak \bigskip}
\def\eps{\varepsilon}
\def\R{{\mathbb R}}
\def\N{{\mathbb N}}
\def\A{{\mathcal A}}
\def\Om{\Omega}
\newcommand{\K}{\mathcal{K}}
\def\C{{\mathcal C}}
\def\H{\mathcal{H}}
\def\K{{\mathcal K}}
\def\O{{\mathcal O}}
\def\div{{\rm div}}
\def\Conv{{\rm Conv}}
\def\diam{{\rm diam}}
\def\Int{\text{Int}}
\def\inr{{\rm inr}}
\def\Id{{I\!d}}
\def\Ind{\raise 2pt\hbox{$\chi $}}
\newbox\boxint\newbox\boxtir\newdimen\longa\newdimen\longb
\def\intgm{
\setbox\boxint=\hbox{$\displaystyle\int$}
\setbox\boxtir=\hbox{$-$}
\longa=\wd\boxint\longb=\wd\boxtir
\advance\longa-\longb\divide\longa2
\advance\longb\longa
\box\boxint\hskip-\longb\box\boxtir}
\def\intm{
\setbox\boxint=\hbox{$\int$}
\setbox\boxtir=\hbox{{--}}
\longa=\wd\boxint\longb=\wd\boxtir
\advance\longa-\longb\divide\longa2
\advance\longb\longa
\box\boxint\hskip-\longb\box\boxtir}
\def\moyp{\mathop{\intm}\nolimits}
\def\moy_#1{\ifmmode\ifinner{\moyp_{#1}}\else{\fint_{#1}}\fi\fi}
\newcounter{mnotecount}[section]
\newcommand{\rmnote}[1]{}
\newcommand{\footnoteremember}[2]{\footnote{#2}\newcounter{#1}\setcounter{#1}{\value{footnote}}}
\newcommand{\footnoterecall}[1]{\footnotemark[\value{#1}]}
\title{ Regularity in shape optimization\\under convexity constraint}
\author{Jimmy {\sc Lamboley}\footnoteremember{0}{Sorbonne Universit\'e and Universit\'e Paris Cit\'e, CNRS, IMJ-PRG, F-75005 Paris, France.},  
Rapha\"el {\sc Prunier}\footnoterecall{0}}
\begin{document}

\maketitle
\begin{abstract}

{\it Keywords:\,} Shape optimization, isoperimetric problem, convexity.\\\\
This paper is concerned with the regularity of shape optimizers of a class of isoperimetric problems under convexity constraint. We prove that minimizers of the sum of the perimeter and a perturbative term, among convex shapes, are $C^{1,1}$-regular. To that end, we define a notion of quasi-minimizer fitted to the convexity context and show that any such quasi-minimizer is $C^{1,1}$-regular. The proof relies on a cutting procedure which was introduced to prove similar regularity results in the calculus of variations context. Using a penalization method we are able to treat a volume constraint, showing the same regularity in this case. We go through some examples taken from PDE theory, that is when the perturbative term is of PDE type, and prove that a large class of such examples fit into our $C^{1,1}$-regularity result. Finally we provide a counter-example showing that we cannot expect higher regularity in general. 
\smallskip

\end{abstract}

\section{Introduction}

In this paper, we study the regularity properties of minimizers in shape optimization under convexity constraint for a large class of problems of isoperimetric type. 

In the classical framework of shape optimization (classical in the sense that there is no convexity constraint), the question of regularity has a long-standing history, with strong interactions with the fields of geometric measure theory and free boundary problems. More specifically, the study of various problems involving the classical De-Giorgi perimeter $P$ leads to the notion of quasi-minimizer of the perimeter (see \eqref{eq:quasimin}) which is very useful to prove regularity for many problems of the form
\begin{equation}\label{eq:pb0}
\min\Big\{P(\Om)+R(\Om), \;\;\Om\in \A\Big\}
\end{equation}
Here $R$ is considered to be a perturbative term, and $\A$ is a given class of measurable sets, for example the class of sets of given volume $V_{0}$, or the class of sets included in a fixed box $D$, or a mix of both:
 $$\A_{1}=\{\Om\subset \R^N, \;\;|\Om|=V_{0}\},\;\;\;\;\A_{2}=\{\Om\subset \R^N,\Om\subset D\}, \;\;\;\;\A_{3}=\{\Om\subset \R^N,\ \Om\subset D, \;|\Om|=V_{0}\}$$
 though many other examples are possible (note that $|\Om|$ denotes the volume of $\Om$). It would be impossible to refer to every work in this direction, but we refer to \cite{Mag} for a nice introduction to the concept of quasi-minimizer of the perimeter and for several examples, and to \cite{SZ97Are,ACKS,landais,KM2,dPV,DLPV,P21Lar} for a short sample of applications.

In this paper, we are interested in a similar class of problems, where we add a convexity assumption to the admissible shapes. More precisely, we replace \eqref{eq:pb0} by
\begin{equation}\label{eq:pb1}
\min\Big\{P(K)+R(K), \;\;K\in \A\cap \K^N\Big\}
\end{equation}
where $\K^N$ denotes the class of convex bodies of $\R^N$ (convex compact sets with nonempty interior). As before, $P(K)$ denotes the perimeter of $K$, and as $K$ is a convex body (and therefore is a Lipschitz set) we have $P(K)=\H^{N-1}(\partial K)$ where $\H^{N-1}$ denotes the $(N-1)$-dimensional Hausdorff measure in $\R^N$. Again $R$ is a shape functional that is considered as a perturbative term, and we will make assumptions on $R$ so that the term driving the regularity of optimal shapes is the perimeter term.

Before going into more details about our motivations and our strategy, let us start by giving a consequence of the three main results of the paper that are Theorems \ref{th:shape1}, \ref{th:shape3} and \ref{th:examples}:

\begin{theorem}\label{th:intro}
Let $n\in \N^*$, $N\geq2$,  $F:(0,+\infty) \times (0,+\infty)\times (0,+\infty)^n\times\R_{+}^n \rightarrow\R$ be locally Lipschitz. Let $R:\K^N\rightarrow\R$ be defined by the formula 
\begin{equation}\label{eq:pbapp0} R(K):=F\Big(|K|, \tau(K), \lambda_{1}(K),\ldots,\lambda_{n}(K),\mu_{1}(K),\ldots,\mu_{n}(K)\Big)\end{equation}
where $\tau(K)$ is the torsional rigidity of $K$, $\lambda_{1}(K),\ldots,\lambda_{n}(K)$ are the $n$ first Dirichlet eigenvalues of $K$, and $\mu_{1}(K),\ldots,\mu_{n}(K)$ the $n$ first  Neumann eigenvalues of $K$ (see Section \ref{sect:examples} for  precise definitions).
Let $D\subset \R^N$ be measurable (non-necessarily with finite measure).
\begin{itemize}
    \item Any solution to the problem
 \[\inf\left\{P(K)+R(K),\ K\in\K^N,\ K\subset D\right\}\]
is of class $C^{1,1}$.
\item Suppose that $D$ is a convex body and let $0<V_0<|D|$.  Then any solution to the problem
 \begin{equation}\nonumber \inf\left\{P(K)+R(K),\ K\in \K^N,\ K\subset D, \ |K|=V_0 \right\}\end{equation} 
 is of class $C^{1,1}$.
 \end{itemize} 
\end{theorem}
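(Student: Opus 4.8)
The plan is to read off this statement from the abstract regularity results of the paper: Theorem~\ref{th:shape1} for the inclusion constraint $K\subset D$, Theorem~\ref{th:shape3} for the additional volume constraint $|K|=V_0$, and Theorem~\ref{th:examples}, which certifies that a functional $R$ of the form~\eqref{eq:pbapp0} is an admissible perturbation of the perimeter. Concretely, I would fix a solution $K^*$ of one of the two problems; since $K^*$ is a convex body it is bounded, say $B_\rho(x_0)\subset K^*\subset B_{R_0}$, so the possibly infinite measure of $D$ is irrelevant and every competitor $L$ produced by the cutting procedure (cutting off a small cap $K^*\cap\{x\cdot\nu\le t\}$, or adding a small bump $\Conv(K^*\cup B_\delta(p))$, all of which remain squeezed between a fixed ball and a fixed smaller ball) has volume, torsional rigidity and Dirichlet/Neumann eigenvalues ranging over a fixed compact subset $\mathcal{Q}$ of the domain of $F$ — using here monotonicity of $|\cdot|$, $\tau$, $\lambda_i$ under inclusion, and the two-sided bounds following from $B_\rho(x_0)\subset L\subset B_{R_0}$ for the Neumann spectrum. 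On $\mathcal{Q}$ the locally Lipschitz map $F$ is Lipschitz with some constant $L_F$, so
\begin{equation*}
\big|R(K^*)-R(L)\big|\le L_F\Big(\big||K^*|-|L|\big|+\big|\tau(K^*)-\tau(L)\big|+\sum_{i=1}^n\big|\lambda_i(K^*)-\lambda_i(L)\big|+\sum_{i=1}^n\big|\mu_i(K^*)-\mu_i(L)\big|\Big).
\end{equation*}

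Next I would invoke Theorem~\ref{th:examples}, whose core is precisely that each of $|\cdot|$, $\tau$, $\lambda_1,\dots,\lambda_n$, $\mu_1,\dots,\mu_n$ is, along the cutting procedure, Lipschitz with respect to the symmetric difference: there is $C=C(K^*,N)$ with $|G(K^*)-G(L)|\le C\,|K^*\triangle L|$ for each such functional $G$ and each competitor $L$. Inserting this into the displayed bound gives $|R(K^*)-R(L)|\le C'\,|K^*\triangle L|$; combined with the minimality $P(K^*)+R(K^*)\le P(L)+R(L)$ this yields $P(K^*)\le P(L)+C'\,|K^*\triangle L|$ over all admissible cuts, i.e.\ $K^*$ is a quasi-minimizer of the perimeter in the convexity-adapted sense of the paper. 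Theorem~\ref{th:shape1} then gives $K^*\in C^{1,1}$, which is the first bullet.

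For the second bullet I would first dispose of the volume constraint by penalization, as in Theorem~\ref{th:shape3}: there is $\Lambda_0>0$ such that for $\Lambda\ge\Lambda_0$ the constrained minimizer $K^*$ is also a minimizer of $K\mapsto P(K)+R(K)+\Lambda\big||K|-V_0\big|$ over all convex bodies $K\subset D$. Since the extra term is $1$-Lipschitz in the volume, the penalized perturbation $\widetilde R:=R+\Lambda\big||\cdot|-V_0\big|$ still satisfies the cutting estimate of the previous paragraph, so the volume-unconstrained argument applies to it and returns $K^*\in C^{1,1}$.

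The deduction itself is soft; the real work is hidden in Theorem~\ref{th:examples}, namely proving $|G(K^*)-G(L)|\le C|K^*\triangle L|$ with a constant uniform over all sufficiently small cuts. For the volume and the torsional rigidity this follows from elementary energy comparisons (using the fixed interior ball $B_\rho(x_0)$ as a barrier), and the Dirichlet eigenvalues can be handled via monotonicity and standard spectral stability; the genuinely delicate case is the Neumann spectrum, which is neither monotone nor automatically stable under the boundary perturbations created by cutting, and where one must use convexity — uniform cone conditions, control of $\H^{N-1}$ of the cut region — to keep the constant bounded. A further technical point, specific to the volume-constrained case, is that cutting strictly lowers the volume, so the competitors entering the penalized problem must be renormalised (e.g.\ by a small dilation or an added bump) and one must check that $\Lambda_0$ dominates the perimeter variation this creates; it is here that convexity together with $D$ being a bounded convex body makes $\Lambda_0$ finite.
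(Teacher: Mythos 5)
Your proposal is correct and follows essentially the same route as the paper: one checks that $R$ satisfies the Lipschitz hypothesis \eqref{eq:hypR4} by combining the local Lipschitz continuity of $F$ on the compact range of $(|K|,\tau,\lambda_i,\mu_i)$ over $\K^N_{D',D}$ (obtained from monotonicity and the bounds $\mu_k\leq\lambda_k\leq\lambda_k(D')$) with the individual Lipschitz estimates of Theorem \ref{th:examples}, and then invokes Corollary \ref{th:shapecor} for the first bullet and the penalization of Theorem \ref{th:shape3} (Lemma \ref{lem:shapepenal}) for the second. The only cosmetic deviation is your mention of outward ``bump'' competitors, which are neither needed nor admissible under the constraint $K\subset D$; Definition \ref{def:qmpc} only requires inward perturbations, so this does not affect the argument.
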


In other words, we have identified a large class of functions $R$ so that solutions to \eqref{eq:pb1} are smooth up to the $C^{1,1}$-regularity. We refer to Section \ref{ssect:sharp} for an example of a shape optimization problem of the kind \eqref{eq:pb1} leading to an optimal shape that is a stadium, and is therefore not $C^2$, thus showing that our result is sharp in general. Note that existence is not always ensured for the two problems above (taking for instance $R=0$), and we thus prove in Theorem \ref{th:general_existence} the existence of solutions under various general hypotheses on $R$ and $D$.

\paragraph{Motivations:}

Let us give a few motivations for such a result: shape optimization under convexity constraint goes back to the study of Newton's problem of the body of minimal resistance, that can be formulated as
\begin{equation}\label{eq:newton}
\min\left\{J(u)=\int_{D}\frac{dx}{1+|\nabla u|^2},\;\;u:D\to[-M,0]\;\;\textrm{convex}\right\}
\end{equation}
where $D$ is a smooth convex set in $\R^2$, and $M>0$ is given. In this formulation, the graph of $u$ represents the form of a 3-dimensional convex body, and the energy $J$ models the resistance experienced by the body as it moves through a homogeneous fluid with constant velocity in the direction orthogonal to $D$ (in the negative direction in this formulation). The constant $M$ gives a maximal height for the body under study. We refer to \cite{BB,LP01Ane} for more details about this problem, but it is worth noticing that  while one can prove that this problem admits a solution (despite the energy not having a convexity property in $|\nabla u|$), even when $D$ is a disk the solutions are not explicitly known (see \cite{LWZ22Non}) and  even their regularity is not known. It is nevertheless understood that the problem contains a non-convexity structure and that solutions cannot be locally smooth, see for example \cite{LP01Ane}.
Other models with various backgrounds may also involve a convexity constraint, see for example \cite{RC98} with a model in economics.

In the framework of shape optimization, it is interesting to notice that \eqref{eq:pb0}  might have optimizers that are convex (for example the euclidean ball), and in this case the study of \eqref{eq:pb1} is not relevant. Nevertheless, in many situations, \eqref{eq:pb0} may lead to non-convex solutions or even absence of an optimal shape. Let us give two examples of these situations:
\begin{itemize}
\item in \cite{dPV} (see also \cite{DLPV}), the authors study (as in Theorem \ref{th:intro}, $\lambda_{k}$ denotes the $k$th-Dirichlet eigenvalue of the Laplace operator in $\Om$):
$$\min\Big\{P(\Om)+c\lambda_{k}(\Om), \;\;\Om\subset \R^N\Big\}$$
for $k\in\N^*$ and $c>0$, and show that optimal shapes are smooth up to a residual set of co-dimension less than 8 (see \cite[Remark 3.6]{dPV} where it is shown that this problem is equivalent to a constrained formulation). When $N=2$ we have $(P+c \lambda_{k})(\Conv(\Om))\leq 
(P+c \lambda_{k})(\Om)$ so optimal shapes are necessarily convex, but when $N\geq 3$ this argument is not valid anymore. In \cite[Figure 2]{BO} some numerical computations of optimal shapes are done when $N=3$, and one can observe that for some values of $k$ the optimal shapes are not convex,  so that the same problem with a convexity constraint is of interest (see Section \ref{ssect:selected} for more details about this problem).
\item the famous Gamow's liquid drop model leads to the shape optimization problem
\begin{equation}\label{eq:gamow}
\inf\left\{P(\Om)+\int_{\Om}\int_{\Om}\frac{dxdy}{|x-y|},\;\;\Om\subset \R^3, \;|\Om|=V_{0}\right\}
\end{equation}
where $V_{0}\in(0,+\infty)$. It is conjectured that there is a threshold $V^*>0$ such that the ball is a solution if $V_{0}<V^*$, and that there is no solution if $V_{0}>V^*$, see \cite{KM2,J14Iso,LO14Non} for partial results in this direction.  Let us note that the non-existence phenomenon (which is proven for $V_{0}$ large enough, see \cite{LO14Non}) is expected to be due to the splitting of the mass into pieces, and the convexity constraint is  thus violated for such minimizing sequences. It is therefore interesting to wonder about a version of \eqref{eq:gamow} within the class of convex bodies.  In general, as it is easier to get existence within the class of convex bodies (see for example Theorem \ref{th:general_existence}) hence many problems of the type \eqref{eq:pb1} will be of interest if \eqref{eq:pb0} has no solution.
\end{itemize}
Let us also quote two areas of applications to motivate our regularity results:
\begin{itemize}
\item in the study of Blaschke-Santal\'o diagrams for $(P,\lambda_{1},|\cdot|)$ in the class of convex planar sets (see \cite{FL21Bla}), that is to say describing the set
$$\Big\{(x,y)\in\R^2, \;\exists K\in\K^2, \;\;P(K)=x, \;\lambda_{1}(K)=y, \;|K|=1\Big\}$$
 The authors of \cite{FL21Bla} use some regularity theory in shape optimization under convexity constraint in the proof of their main result \cite[Theorem 1.2]{FL21Bla}. Similar results in higher dimension (replacing $\K^2$ with $\K^N$ for $N\geq 3$) are still open problems, and we believe that the tools we develop in this paper can be of help for further investigation in this direction.
\item since the work of Cicalese and Leonardi \cite{CL12Ase}, it is known that regularity theory can help to prove a quantitative version of classical isoperimetric inequalities, see also \cite{BdPV15Fab} where this strategy is the only one (we know of) giving the optimal exponent. It will be worth investigating if one could get new quantitative isoperimetric inequalities in the class of convex sets thanks to our regularity results.
\end{itemize}

\paragraph{State of the art about regularity theory with convexity constraint:}

In the framework of Calculus of variations, one can wonder about the regularity properties of solutions to the following generalization of \eqref{eq:newton}:
\begin{equation}\label{eq:cvconvex}
\min\left\{\int_{\Om}L(x,u(x),\nabla u(x))dx, \;\;u\in X, \;\;u\textrm{ convex}\right\}
\end{equation}
where $\Om$ is a convex set in $\R^N$, $L:\Om\times \R\times \R^N\to\R$ is a Lagrangian and $X$ is a suitable functional space, possibly including boundary constraints. In \cite{C02Cal} the author obtains in particular a $C^1$-regularity result when $L$ is locally uniformly convex in the third variable, and when $N=1$. In \cite{CLR01Reg}, the authors study the same case ($L$ locally uniformly convex), but this time when $N\geq 1$, and $X$ includes a Dirichlet boundary condition: they identify conditions on $\Om, L$ and $X$ so that solutions are $C^1$.

These results were not sharp in general, therefore Caffarelli, Carlier and Lions studied in \cite{CCL}\footnote{At the time we are writing this paper, the work \cite{CCL} is not published.  Let us say here that we will use several ideas from this paper, though we will reproduce them for the convenience of the reader. We try to make it as clear as possible when these ideas are used in our proofs. We warmly thank G. Carlier for providing us a version of \cite{CCL}.} the model case
\begin{equation}\label{eq:model}\min\left\{J(u):=\frac{1}{2}\int_{\Om}|\nabla u|^2dx+\int_{\Om}fudx,\;\;u\in H^1(\Om),\;\;u\textrm{ convex}\right\},
\end{equation}
and proved that a minimizer $u^*$ is locally $C^{1,1-N/p}$ in $\Om$ if $f_{+}\in L^p(\Om)$ with $p>N$, and that this regularity is optimal.

In the framework of shape optimization, D. Bucur proved in  \cite{B03Reg} a $C^1$-regularity result for shape optimization problems with convexity constraint, for functionals involving $\lambda_{k}$ and the volume; this result can be applied for example to
$$\min\Big\{\lambda_{k}(K), \;\;|K|=V_{0}, \;K\in\K^N\Big\}.$$
A sharp regularity result for this problem is still an open problem, though it is expected that optimal shapes are $C^{1,1/2}$ (and that this result is sharp when $k=2$), see \cite{L11Abo}. 

For problems of the kind \eqref{eq:pb1}, \cite{LNP12Reg} shows that under some assumption on $R$ (see Remark \ref{rk:lnp}) and assuming $N=2$, solutions must be $C^{1,1}$.  Comparing it to Theorem \ref{th:intro} above, this result applies to $R(K)=F(|K|,\lambda_{1}(K),\tau(K))$. Therefore, the results we show in the current paper are a generalization of \cite[Theorem 1]{LNP12Reg} to the higher-dimensional case, and to a wider class of functional as well. 

Finally, in \cite{GNR} we can find another $C^{1,1}$-regularity result for solutions to the following 2-dimensional version of a model for charged liquid drop at an equilibrium state:
\begin{multline}\label{eq:gnr}\min\Big\{P(K)+\mathcal{I}(K), \;K\in\K^2, \;|K|=V_{0}\Big\}\\\textrm{ where }\;\;\mathcal{I}(K)=\inf\left\{\int_{K\times K}\log\left(\frac{1}{|x-y|}\right)d\mu(x)d\mu(y), \mu\in\mathcal{P}(K)\right\}
\end{multline} where $\mathcal{P}(K)$ denotes the set of probabily measures supported on $K$.
Here $\mathcal{I}(K)$ can be seen as a capacity term, and it is not so far from the functionals involved in \eqref{eq:pbapp0}, though it is related to a PDE in the exterior of $K$. Our result does not apply directly to \eqref{eq:gnr} or to its higher-dimensional generalizations, but it will be the  subject of future work to adapt our tools to this context.

\paragraph{Strategy of proof and plan of the paper:}

When dealing with regularity theory for \eqref{eq:pb1} or \eqref{eq:cvconvex}, we already have a mild regularity property, namely that solutions are necessarily locally Lipschitz. This is a big difference with \eqref{eq:pb0} where the most difficult part is to prove that solutions are a bit regular, further regularity being obtained usually through an Euler-Lagrange optimality condition.

In \cite{C02Cal} as well as in \cite{LNP12Reg}, the proofs of the regularity results also rely on the writing and the use of an Euler-Lagrange equation, taking into account the convexity constraint, which involves a Lagrange multiplier (infinitely dimensional). It does not seem easy to adapt this method to higher dimensional cases.

In \cite{B03Reg,CLR01Reg,CCL,GNR}, the method is rather different, and consists in building test functions or shapes using a cutting procedure.

In this paper, we obtain three main results, which together lead to the proof of Theorem \ref{th:intro}:
\begin{enumerate}
\item in the spirit of what is done without convexity constraint, we introduce a new notion of quasi-minimizer of the perimeter under convexity constraint (see Definition \ref{def:qmpc}). We show in Theorem \ref{th:shape1} that these sets are $C^{1,1}$ adapting the ideas of \cite{CCL}.  A first important observation is that when writing the perimeter term as a function on the graph, we obtain a Lagrangian of the form $\int_{D}L(\nabla u)dx$ with a uniform convexity property, which explains that the ideas for \eqref{eq:model} can be adapted to this case. 
 However, the main difficulty here is to be careful on how a convex body can be seen as the graph of a convex function: it is not possible to have a local point of view, because this would lead to constraints that are too restrictive  (see Remark \ref{rk:localqpmc}). 
As an application, we show in Corollary \ref{th:shapecor} that if $R$ satisfies a suitable Lipschitz property with respect to the volume metric (see \eqref{eq:hypR3}), then solutions to \eqref{eq:pb1} (a priori with no other constraints) are quasi-minimizer of the perimeter under convexity constraint, and are therefore $C^{1,1}$. These results are described in Section \ref{sect:shape1}.
\item then in Section \ref{sect:shape3}, and in the same spirit with what is done in the {\it classical} case (without convexity constraint,  see \cite{T88Var} and \cite{DLPV} for example), we show how one can handle volume constraint (see Theorem \ref{th:shape3}). To that end we show that the volume constraint can be penalized using Minkowski sums  (see Lemma \ref{lem:shapepenal}).
\item finally in Section \ref{sect:examples} we focus on examples, and in particular we show that the functional \eqref{eq:pbapp0} satisfies a Lipschitz property with respect to the volume metric, so that Theorem \ref{th:shape1} can be applied (see Theorem \ref{th:examples}).
\end{enumerate}

\section{Regularity in shape optimization}\label{sect:shape}
In the classical context of sets minimizing perimeter (without convexity constraint), the concept of quasi-minimizer of the perimeter has proved to be very convenient: denoting $P$ the classical De-Giorgi perimeter, we say that $\Om\subset \R^N$ is a local quasi-minimizer of the perimeter if there exists $\alpha\in(0,1]$, $C>0$ and $r_{0}>0$ such that for every $r\in(0,r_{0})$ and $x\in\R^N$ we have:
\begin{equation}\label{eq:quasimin}
P(\Om)\leq P(\widetilde{\Om})+Cr^{N-1+\alpha}, \;\;\textrm{ for every measurable }\widetilde{\Om}\subset\R^N\textrm{ such that }\Om\Delta\widetilde{\Om}\Subset B_{r}(x).
\end{equation}

The regularity theory then shows that quasi-minimizers of the perimeter are  $C^{1,\alpha/2}$, up to a possibly singular set of dimension less than $N-8$ (see for instance \cite{T88Var}). This regularity can even be strengthened to $C^{1,\alpha}$ for every $\alpha\in(0,1)$ if there exists $\Lambda>0$ such that \begin{equation}\label{eq:strong_quasimin} P(\Om)\leq P(\widetilde{\Om})+\Lambda |\Om\Delta \widetilde{\Om}|,\ \text{ for every measurable }\widetilde{\Om}\subset\R^N\end{equation} (see \cite[Theorem 4.7.4]{Amb}).
In order to take advantage of these results, when studying a shape optimization problem involving the perimeter in the energy functional, one tries to show that a minimizer of our problem must be a quasi-minimizer of the perimeter. To that end, one needs to handle the different terms in the energy, as well as the various constraints.

In this section we therefore introduce a new notion of quasi-minimizer of the perimeter, under a convexity constraint. We study the regularity property it leads to, and then show how one can deal with various constraints and energy terms.
 
Throughout this section we denote by $\K^N$ the class of convex bodies of $\R^N$ (convex compact sets with nonempty interior). Note that (as convex bodies are Lipschitz set), we have $P(K)=\H^{N-1}(\partial K)$ for any $K\in \K^N$,  {\it i.e.} the perimeter of a convex body is the $N-1$-dimensional Hausdorff measure of its boundary.

\subsection{Regularity for quasi-minimizers of the perimeter under convexity constraint}\label{sect:shape1} 

\begin{definition}\label{def:qmpc}We say that $K\in \K^N$ is a quasi-minimizer of the perimeter under convexity constraint if there exist $\eps_K>0, \Lambda_K\geq0$ such that
\begin{equation}\label{eq:qmpc} \forall \widetilde{K}\in \K^N\textrm{ such that } \widetilde{K}\subset K \textrm{ and } |K\setminus\widetilde{K}|\leq\eps_K,\ \ P(K)\leq P(\widetilde{K})+\Lambda_K|K\setminus\widetilde{K}|\end{equation}\end{definition}

\begin{remark}\label{rk:localqpmc}
This notion of quasi-minimizer is not the mere restriction to convex perturbations of the standard notion of quasi-minimizer recalled in \eqref{eq:quasimin} and \eqref{eq:strong_quasimin}:
 \begin{itemize}
 \item first, here we ask that $K$ be minimal in a volume-neighborhood instead of asking it only for sets $\widetilde{K}$ verifying $\widetilde{K}\Delta K\subset B_{r}(x)$ for some $x\in \R^N$ and for small enough $r>0$. This is due to the fact that the latter condition is much too restrictive for convex sets, as it is not always possible to perturbate a convex set $K$ into $\widetilde{K}\in \K^N$ only over some ball $B_r(x)$. For instance, if $K\in \K^2$ is a square with $x\in\partial K$ located inside a segment of $\partial K$, then we can see that if $r$ is small enough any $\widetilde{K}\in \K^2$ such that $\widetilde{K}\Delta K\subset B_{r}(x)$ must be $K$ itself. In fact, the possibility of perturbating a convex set $K$ locally around $x\in\partial K$ is somehow directly connected to some kind of strict convexity of $K$ around $x$. As a consequence, the error term is replaced by the volume of $K\Delta \widetilde{K}$, similarly to what is done in \cite{Mag}.
 \item on the other hand we merely require optimality for the sets $\widetilde{K}$ which perturbate $K$ from the inside, as this will be sufficient to obtain regularity properties (see the proof of Theorem \ref{th:shape1}, where the competitors $K_r\subset K$ are obtained by cutting $K$ by a well-chosen hyperplane). Note that improving the quasi-minimality property by allowing also outward perturbations of $K$ does not lead to better regularity properties in general (as shows the counter-example in Section \ref{ssect:sharp}).
 \end{itemize}
\end{remark} 
 
The regularity result for quasi-minimizers proved in this section is the following. 
\begin{theorem}\label{th:shape1}Let $K$ be a quasi-minimizer of the perimeter under convexity constraint. Then $K$ is $C^{1,1}$.\end{theorem}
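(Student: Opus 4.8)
The plan is to work locally near an arbitrary boundary point $x_0\in\partial K$ and show that $\partial K$ satisfies a uniform interior ball condition there, with radius bounded below independently of $x_0$; by convexity this is equivalent to $C^{1,1}$-regularity. First I would fix $x_0\in\partial K$ and, after a rotation, choose coordinates so that a supporting hyperplane at $x_0$ is horizontal and $K$ lies below it, so that locally $\partial K$ is the graph of a concave function $u$ defined on a small ball $\omega\subset\R^{N-1}$, with $u(0)=0$, $\nabla u(0)=0$. The perimeter of $K$ then contains the term $\int_\omega\sqrt{1+|\nabla u|^2}\,dx$, and $L(p)=\sqrt{1+|p|^2}$ is uniformly convex on bounded sets — this is the structural fact that makes the argument from \cite{CCL} applicable. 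The goal becomes: show $u$ is $C^{1,1}$ near $0$ with a universal bound, i.e. there is $c>0$ (depending only on $K$) such that $u(x)\ge -c|x|^2$ near $0$.

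The core step is the cutting/competitor construction. For small $r>0$ I would slice $K$ by a hyperplane parallel to the supporting hyperplane at distance $\sim r$ (more precisely, cut off the ``cap'' of $K$ near $x_0$ of height of order $r$), obtaining a convex competitor $K_r\subset K$ with $|K\setminus K_r|\le\eps_K$ for $r$ small. Applying the quasi-minimality inequality \eqref{eq:qmpc} gives $P(K)\le P(K_r)+\Lambda_K|K\setminus K_r|$, and the key is to estimate the perimeter change: $P(K_r)-P(K)$ equals the area of the new flat facet minus the area of the removed piece of $\partial K$, which by the uniform convexity of $L$ is controlled from above by $-c_0\int_{\text{cap}}|\nabla u - \bar p|^2\,dx$ plus lower-order terms, where $\bar p$ is the average slope over the cap. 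Combining with the volume error term $\Lambda_K|K\setminus K_r|$, which is of the order $r\times(\text{base area})$, one obtains a Caccioppoli-type estimate forcing $\nabla u$ to be close to its average on shrinking caps at a quantitative rate; iterating this (a Campanato-type / De Giorgi iteration on dyadic caps) upgrades it to pointwise control $|\nabla u(x)-\nabla u(0)|\lesssim |x|$, hence the desired one-sided quadratic bound $u(x)\ge -c|x|^2$.

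The main obstacle — and the point stressed in Remark \ref{rk:localqpmc} — is the nonlocal nature of convexity: one cannot perturb $K$ inside a small ball $B_r(x_0)$ without destroying convexity (e.g. along a flat facet), so the competitor $K_r$ must be a genuine global cut, and one must check carefully that (i) $K_r$ is still a convex body, (ii) $|K\setminus K_r|$ is genuinely small (so \eqref{eq:qmpc} applies) and comparable to $r\cdot|\text{base}|$, and (iii) the perimeter computation correctly accounts for the ``side'' contributions where the cutting hyperplane meets $\partial K$ away from the cap, which must be shown to be negligible or favorable. A secondary technical point is the uniformity of all constants in $x_0$: the lower bound on the injectivity radius of the graph parametrization and the modulus of convexity of $L$ must be taken uniform over $\partial K$, which is where compactness of $K$ enters. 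Once the uniform interior ball condition is established, standard convex-geometry facts (a convex body with a uniform interior ball condition has $C^{1,1}$ boundary) conclude the proof.
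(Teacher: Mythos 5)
There is a genuine gap, and it lies in the choice of competitor. You cut $K$ by a hyperplane \emph{parallel} to the supporting hyperplane at $x_0$, removing a cap of height $\sim r$. This does not yield the quadratic bound, because the sublevel set $\{u-l\le h\}$ need not be comparable to a ball of radius $\sqrt{h}$: if $\partial K$ is flat in some tangential directions near $x_0$ and curves in others, the cap at height $h$ is a long thin slab, its volume scales with its full (possibly macroscopic) extent in the flat directions, and the competition between the volume penalty $\Lambda_K|K\setminus K_r|$ and the perimeter gain becomes vacuous. Concretely, for $u(x_1,x_2)=x_1^{\gamma}$ ($1<\gamma<2$, so only $C^{1,\gamma-1}$) on a strip $\{|x_2|\le L\}$, a horizontal cut at height $h$ removes volume $\sim h^{1+1/\gamma}L$, gains Dirichlet energy $\sim h^{2-1/\gamma}L$, and carries a boundary term $\sim hL$ from the integration by parts; the resulting inequality $h^{2-1/\gamma}\le C\bigl(h^{1+1/\gamma}+h\bigr)$ holds for every small $h$ and every $\gamma>1$, so nothing is ruled out. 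The key idea of the paper (taken from \cite{CCL}) is to cut by a \emph{tilted} hyperplane: writing $M_r=\sup_{B_r(y)}(u-l)=(u-l)(rq_r)$, one cuts along the graph of $\sigma_r(x)=l(x)+\frac{M_r}{2r}(\langle q_r,x\rangle+r)$. Convexity of $u-l$ then confines $\{u\le\sigma_r\}$ to the slab $\{|\langle q_r,\cdot\rangle|\le r\}$ (see \eqref{eq:omegar}), so the removed volume is $\lesssim M_r\,r\,\H^{N-2}(\widetilde{\omega_r})$ while a one-dimensional Poincar\'e estimate along lines parallel to $q_r$ gives a Dirichlet gain $\gtrsim M_r^2 r^{-1}\H^{N-2}(\widetilde{\omega_r})$. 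The cross-sectional measure $\H^{N-2}(\widetilde{\omega_r})$ cancels and \eqref{eq:qmpc} yields $M_r\le Cr^2$ directly, at a single scale; no Campanato/De Giorgi iteration is needed or performed.

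A secondary point: you propose to write $\partial K$ as a graph over a \emph{small} ball $\omega$, whereas the cutting hyperplane necessarily meets $\partial K$ far from $x_0$, so the graph domain must be taken maximal. You do flag the ``side contributions'' as something to check, but for $N\ge 3$ this is where a substantial part of the proof lives: one must introduce $\Om_r=\widehat{v_r}^{-1}(K)$, prove it is convex with uniformly non-empty interior, verify the inclusion $\partial K_r\cap\widehat{v_r}(\Om)^c\subset\partial K\cap\widehat{u}(\Om)^c$ so that the cut only helps the perimeter outside the cap, and control the boundary term of the integration by parts via the Stokes estimate \eqref{eq:Stokes}. Your reduction of $C^{1,1}$ to a uniform interior ball condition (equivalently $M_r\le Cr^2$ uniformly in the touching point) and your identification of the uniform convexity of $p\mapsto\sqrt{1+|p|^2}$ as the structural engine are both correct and match the paper; the argument fails only, but decisively, at the construction of $K_r$.
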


As mentioned in the introduction, this leads to a regularity result for minimizer of certain energy having a perimeter term: letting $D\subset \R^N$, we consider the following shape optimization problem \begin{equation}\inf\left\{P(K)+R(K),\ K\in \K^N,\ K\subset D \right\}\label{eq:pbshape}\end{equation}  where $R$ is a shape functional satisfying \begin{equation}\label{eq:hypR3} \ \ \forall K\in \K^N, \exists C_K>0,\;\exists \eps_K>0, \ \forall \widetilde{K} \in \K^N\;\;\textrm{ s.t. }\widetilde{K}\subset K \;\textrm{ and } \ |K\setminus \widetilde{K}|\leq \eps_K,  \ \ R(\widetilde{K})-R(K)\leq C_K|K\setminus \widetilde{K}|\end{equation} Then we have the following easy consequence of Theorem \ref{th:shape1}:
\begin{corollary}\label{th:shapecor} Assume that $R:\K^N\rightarrow\R$ satisfies \eqref{eq:hypR3}. Then any solution $ K^{*}$ of \eqref{eq:pbshape} is $C^{1,1}$. \end{corollary}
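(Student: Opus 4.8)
The plan is to show that any minimizer $K^{*}$ of \eqref{eq:pbshape} is a quasi-minimizer of the perimeter under convexity constraint in the sense of Definition \ref{def:qmpc}, and then simply invoke Theorem \ref{th:shape1}. Let $\eps_{K^{*}}>0$ and $C_{K^{*}}>0$ be the constants associated to $K^{*}$ by hypothesis \eqref{eq:hypR3}. The key — and essentially only — observation is that for any $\widetilde K\in\K^N$ with $\widetilde K\subset K^{*}$ one automatically has $\widetilde K\subset K^{*}\subset D$, so that $\widetilde K$ is an admissible competitor for \eqref{eq:pbshape}. This is precisely the reason why Definition \ref{def:qmpc} asks only for optimality against \emph{inward} convex perturbations: such perturbations preserve the inclusion constraint $K\subset D$ for free, and therefore no compatibility issue between the perturbation and the admissible class arises.

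Granting this, I would argue as follows: since $K^{*}$ minimizes $P+R$ over $\{K\in\K^N,\ K\subset D\}$, for every $\widetilde K\in\K^N$ with $\widetilde K\subset K^{*}$ and $|K^{*}\setminus\widetilde K|\le\eps_{K^{*}}$ we have
\[
P(K^{*})+R(K^{*})\ \le\ P(\widetilde K)+R(\widetilde K).
\]
Rearranging and applying \eqref{eq:hypR3},
\[
P(K^{*})\ \le\ P(\widetilde K)+\bigl(R(\widetilde K)-R(K^{*})\bigr)\ \le\ P(\widetilde K)+C_{K^{*}}\,|K^{*}\setminus\widetilde K|,
\]
which is exactly \eqref{eq:qmpc} with $\Lambda_{K^{*}}=C_{K^{*}}$ and the same threshold $\eps_{K^{*}}$. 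Hence $K^{*}$ is a quasi-minimizer of the perimeter under convexity constraint, and Theorem \ref{th:shape1} gives that $K^{*}$ is $C^{1,1}$.

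In this sense the corollary carries no genuine obstacle of its own: it is a formal consequence of Theorem \ref{th:shape1}, the entire analytic content being contained in that theorem, while hypothesis \eqref{eq:hypR3} is designed precisely so that the perturbative term $R$ can be absorbed into the admissible error term of \eqref{eq:qmpc}.
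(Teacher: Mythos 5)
Your proof is correct and is exactly the argument the paper intends (the corollary is presented as an ``easy consequence'' of Theorem \ref{th:shape1}, and the same absorption of $R$ via \eqref{eq:hypR3} appears verbatim in the proof of Theorem \ref{th:shape3}): inward convex competitors remain admissible for \eqref{eq:pbshape}, minimality plus \eqref{eq:hypR3} yields \eqref{eq:qmpc} with $\Lambda_{K^*}=C_{K^*}$, and Theorem \ref{th:shape1} concludes.
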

\begin{remark}
Note that it may happen that \eqref{eq:pbshape} has no solution even if $D$ is bounded: it is the case for example if $R\equiv 0$. See Theorem \ref{th:general_existence} (i) for an existence result when $D$ is a convex body and there is an additional volume constraint on $K$. 
\end{remark}
\begin{remark}\label{rk:lnp}
In \cite{LNP12Reg} is proved a result similar to Corollary \ref{th:shapecor} in the case $N=2$: more precisely, it is proved (see \cite[Corollary 1]{LNP12Reg}) that if $K^*$ is a solution of \eqref{eq:pbshape} and if $R$ admits a shape derivative at $K^*$ (see \cite[Section 5.9.1]{HP}) which can be represented in $L^p(\partial K^*)$ with $p\in[1,\infty]$, which means that for every $\xi\in W^{1,\infty}(\R^2,\R^2)$,
\begin{equation}\label{eq:hyplnp}
R'(K^*)(\xi)=\lim_{t\to 0}\frac{\big(R((\Id+t\xi)(K^*))-R(K^*)\big)}{t}=\int_{\partial K^*}g\; \xi\cdot\nu_{\partial K^*}d\sigma
\end{equation}
for some function $g\in L^p(\partial K^*)$,
 then $\partial K^*\cap D$ is $W^{2,p}$. In particular when $p=\infty$, this leads to the $C^{1,1}$-regularity as in Corollary \ref{th:shapecor}.
 
We also prove (see \cite[Section 3.2]{LNP12Reg}) that the function $R:K\mapsto F(|K|,\lambda_{1}(K),\tau(K))$ (where $F:\R^3\to\R$ is smooth, and the PDE functionals $\lambda_1$ and $\tau$ are defined at the beginning of Section \ref{sect:pde}) satisfies \eqref{eq:hyplnp} for some $g\in L^\infty(\partial K^*)$, leading to a $C^{1,1}$-optimal shape in that case. Let us conclude with two comments:
\begin{enumerate}
\item 
The proof of \cite[Theorem 1,Corollary 1]{LNP12Reg} is completly different from the one of Theorem \ref{th:shape1} and Corollary \ref{th:shapecor}, as it relies on an Euler-Lagrange equation for \eqref{eq:pbshape} (see \cite[Proposition 1]{LNP12Reg}), and we believe that these ideas are restricted to the 2-dimensional case.
\item As we will see in Section \ref{sect:examples}, assumption \eqref{eq:hypR3} is much more flexible than \eqref{eq:hyplnp} and applies to much more examples, in particular it does not require the existence of a shape derivative.
\end{enumerate}
\end{remark}

\noindent  {\bf Ideas of the proof of Theorem \ref{th:shape1}}: As mentioned in the Introduction, the proof consists in building a framework  enabling to use the ideas of \cite{CCL}, where the authors prove $C^{1,1}$-regularity of the minimizers to the calculus of variation problem \eqref{eq:model}. 
For any $K\in \K^N$ we locally write $\partial K$ near some point $\widehat{x_0}\in \partial K$ as the graph of a convex function $u:\Om\rightarrow\R$, so that the perimeter of $K$ "near this point" is seen as a Lagrangian $\int_{\Om} L(\nabla u)$ where $L$ is locally strongly convex, meaning that $L:\R^N\rightarrow\R$ is smooth and verifies
\[\forall M>0,\ \exists \alpha>0, \ \forall \left(|p|\leq M, |p'|\leq M\right),\ L(p')-L(p)\geq \langle \nabla L(p),p'-p\rangle+\frac{\alpha}{2}|p'-p|^2\]
This gives hope that we can use the procedure from \cite{CCL}, as it is natural to expect that such  an energy behaves like \eqref{eq:model}.  A main difficulty however is to show that the geometrical context actually allows to build competitors in a similar fashion  to \cite{CCL}. If $K$ is a quasi-minimizer in the sense of Definition \ref{def:qmpc}, such competitors will be obtained by setting \begin{equation}\nonumber K_v:=K\cap \text{Epi}(v)\end{equation} for well-chosen convex functions $v:\Om\rightarrow\R$ with $v\geq u$, using the notation Epi$(v)$ for the epigraph of $v$. It is important to notice that it is not possible to work locally ({\it i.e.} picking $\Om$ as a small neighborhood) but we rather have to choose $\Om$ maximal, and this leads to new difficulties  in comparison with \cite{CCL} (mostly linked to the case $N\geq3$, see also Remark \ref{th:rkHolder1}). An important part of Step \textbf{(ii)} of the proof is concerned with addressing this issue.\\

\noindent{\bf Proof of Theorem \ref{th:shape1}:}
Let $K$ be a quasi-minimizer of the perimeter under convexity constraint in the sense of Definition \ref{def:qmpc}. 

\noindent{\bf Representation of $K$ as a graph: }Let $\widehat{x_0}\in \partial K$ ;  applying Proposition \ref{th:appgraph}, we get that there exists a hyperplane $H\subset \R^N$ and a unit vector $\xi\in\R^N$ normal to $H$ such that, denoting by $(x,t)$ a point in $H\times\R\xi$ coordinates (and hence denoting $\widehat{x_0}:=(x_0,0)$): \begin{itemize} \item The set $\Om:=\left\{x\in H, \ (x+\R\xi)\cap \text{Int}(K) \ne \emptyset\right \}$ is open, bounded and convex, and the function \begin{align}\nonumber u:\Om&\to\R \\ x&\mapsto \min\{t\in\R,\ (x,t)\in K\}\nonumber\end{align} is well-defined and convex. \item It holds\begin{align}\nonumber\{(x,u(x)), \ x\in \Om\}&\subset \partial K \\ K\cap (\Om\times\R\xi)&\subset\{(x,t)\in\Om\times\R\xi,\ u(x)\leq t\}\nonumber\end{align}\item There exists a $\beta>0$ and $c:=c(\beta)>0$ such that $B_{\beta}:=B_{\beta}(x_0)\Subset \Om$ verifies \begin{equation}\label{eq:cbeta} \{(x,t)\in B_{\beta}\times\R\xi,\ u(x)\leq t\leq u(x)+c\}\subset K.\end{equation}\end{itemize}
Throughout the proof the coordinates $(x,t)$ are thought in the orthogonal decomposition $H\times \R\xi$. Moreover the notation $B_r(x)$ for some $x\in H$ and $r>0$ will denote a ball lying in $H$.

Since $B_{\beta}\Subset \Om$ we have that $u$ is globally Lipschitz in $B_{\beta}$. Setting some $y\in B_{\beta/2}(x_0)$ and $p\in \partial u(y)$ (where $\partial u(y)$ denotes the subdifferential of the convex function $u$ at $y$) we let $l(x):=u(y)+\langle p,x-y\rangle$  for $x\in H$ and $M_r:=\sup_{B_r(y)}(u-l)$. We aim to prove that there exists $C>0$  and $r_0>0$ (both independent on $y$) such that $M_r\leq Cr^2$ for any $y\in B_{\beta/2}(x_0)$ and  $0<r<r_0$. This classically ensures that $u$ is $C^{1,1}$ over $B_{\beta/2}(x_0)$ (see for instance Lemma 3.2 in \cite{DF}). As the case $M_{r}=0$ is trivial, from now on we fix $y\in B_{\beta/2}(x_0)$, $0<r<\beta/2$ and assume $M_{r}>0$. Note that $p$, $l$, $M_r$ (and other objects we will introduce along the proof) depend on $y$, although for simplicity it does not appear in the notations. We will also set $y=0$ for simplicity, while paying attention to the fact that the estimates we make in the proof do not depend on $y$. \\

\noindent{\bf Construction of a competitor: } Let $q_r$ be some unit vector such that $M_r=(u-l)(rq_r)$. We set 
 \begin{equation}\forall x\in H, \ \sigma_r(x):=l(x)+\frac{M_r}{2r}(\langle q_r,x\rangle+r),\ \widehat{\sigma}_r(x):=(x,\sigma_r(x)),\nonumber \end{equation} 
\begin{equation} H_r:=\widehat{\sigma}_r(H), \ H_r^+:=\{(x,t)\in \R^{N-1}\times\R, \ t\geq \sigma_r(x)\},\nonumber\end{equation} and finally we define:
\begin{equation} K_r:=K\cap H_r^+\nonumber\end{equation} 
Notice that $K_r\subset K$ is convex and compact. As we will show  in section {\bf (i)} of the proof, the construction of $K_r$ ensures that Int$(K_r)\ne \emptyset$ and $|K\setminus K_r|\leq\eps_K$ for $r\leq r(\text{diam}(\Om),\eps_K,\|\nabla u\|_{L^{\infty}(B_{\beta})})$
(see \eqref{eq:finalvol}  and the end of section {\bf (i)}). Therefore from \eqref{eq:qmpc} we  will get \begin{equation}\label{eq:shape_variation} P(K)-P(K_r)\leq \Lambda_K|K\setminus K_r|\end{equation} for such $r$. With \eqref{eq:shape_variation} we are left to estimate {\bf (i)} the volume variation from above and {\bf (ii)} the perimeter variation from below.  We first provide some central estimates on the size of the set $\{u\leq \sigma_r\}$ which were proven in \cite{CCL}. Note that these estimates only use convexity of $u$ and do not depend on the particular kind of energy introduced in the problem \eqref{eq:model}. We reproduce the proof of \cite{CCL} below for the convenience of the reader.\\

 \noindent{\bf Estimate of $\{u\leq\sigma_r\}$:} Let us prove\begin{equation}\label{eq:omegar} B_{r/2}^+(0)\subset \{u\leq\sigma_r\}\subset \{|\langle q_r,\cdot\rangle|\leq r\}\end{equation} where we set $B_{r/2}^+(0)=B_{r/2}(0)\cap\{\langle q_r,\cdot\rangle\geq 0\}$. \begin{itemize}
\item {$\{u\leq\sigma_r\}\subset \left\{\langle q_r,\cdot\rangle\geq -r\right\}$:} if $\langle q_r,x\rangle<-r$, then \begin{equation}\nonumber u(x)\geq l(x)> l(x)+\frac{M_r}{2r}(\langle q_r,x\rangle+r)=\sigma_r(x)\end{equation} whence we deduce $x\notin\{u\leq\sigma_r\}$. \item {$\{u\leq\sigma_r\}\subset \left\{\langle q_r,\cdot\rangle\leq r\right\}$:} over the interval $I:=\Om\ \cap\left\{tq_r, t>r\right\}$ we know that $u>l+M_r$ thanks to the convexity of $u$. Therefore one can separate the convex sets $I$  and $\{u\leq l+M_r\}$ by some hyperplane $\Pi$. Since by definition $B_r(0)\subset \{u\leq l+M_r\}$, $\Pi$ must also seperate $B_r(0)$ and $I$, implying that $\Pi=\{x\in\R^N,\ \langle q_r,x\rangle=r\}$. This yields in particular $(u-l)\geq M_r$ over $\Pi \cap \Om$. Given now $x \in \Om$ such that $\langle q_r,x\rangle>r$ set $z \in\Pi\cap[0,x]$: from the two informations $(u-\sigma_r)(0)=-M_r/2$ and $(u-\sigma_r)(z)=(u-l)(z)-M_r\geq 0$ we deduce $(u-\sigma_r)(x)>0$ using convexity of $u-\sigma_r$, so that $x\notin \{u\leq\sigma_r\}$. \item $B_{r/2}^+(0)\subset \{u\leq\sigma_r\}$: given $x\in B_{r/2}(0)$, we have $2x\in B_r(0)$ hence \begin{equation}\nonumber(u-l)(x)\leq \frac{1}{2}(u-l)(2x)+\frac{1}{2}\underbrace{(u-l)(0)}_{=0}\leq \frac{M_r}{2}\end{equation} using convexity. If in addition $\langle q_r,x\rangle\geq 0$ then \begin{equation}\nonumber(u-\sigma_r)(x)=(u-l)(x)-\frac{M_r}{2r}(\langle q_r,x\rangle+r)\leq \frac{M_r}{2}-\frac{M_r}{2}\leq 0\end{equation}so that $x\in \{u\leq\sigma_r\}$. 
\end{itemize}

\begin{center} \textbf{(i) Estimate from above} \end{center} 
We have 
\begin{align}|K\setminus K_r|&=|K\cap(\R^N\setminus H_r^+)| \nonumber\\\nonumber&=\left|\{(x,t)\in K,\ t< \sigma_r(x)\}\right|\\\nonumber &\leq|\{(x,t)\in \Om\times \R^+,\ u(x)\leq t\leq \sigma_r(x)\}|\\&= \int_{\{u\leq \sigma_r\}}(\sigma_r-u)d\H^{N-1}\label{Fub_vol}\end{align}
 using Fubini's Theorem. 

If $x\in\{u\leq\sigma_r\}$ we have thanks to the right-hand-side inclusion of \eqref{eq:omegar}\begin{align}\nonumber0\leq (\sigma_r-u)(x) = \underbrace{(l-u)(x)}_{\leq0}+\frac{M_r}{2r}\left(\langle q_r,x\rangle+r\right)\leq M_r \nonumber\end{align}\textit {i.e.} \begin{equation}\label{eq:u_sig_above2} 0\leq \sigma_r -u\leq M_r \text{ over } \{u\leq\sigma_r\}\end{equation}  \\ Injecting \eqref{eq:u_sig_above2} into \eqref{Fub_vol} yields
\begin{equation} \label{eq:finalvol} |K\setminus K_r|\leq M_r\H^{N-1}(\{u\leq\sigma_r\})\end{equation} 
 We will refine further \eqref{eq:finalvol} (into \eqref{eq:finalvol2}), but this estimate is sufficient for now. It gives in particular that there exists $r_{0}(\text{diam}(\Om),\eps_K,\|\nabla u\|_{L^{\infty}(B_{\beta})})>0$ such that $|K\setminus K_r|\leq \eps_K$ for any $r<r_{0}(\text{diam}(\Om),\eps_K,\|\nabla u\|_{L^{\infty}(B_{\beta})})$ hence also that Int$(K_r)\ne \emptyset$ for such $r$. Indeed, as $|p|\leq \|\nabla u\|_{L^{\infty}(B_{\beta})}$ it holds for $0<r<\beta/2$:
\begin{equation} M_r\leq \sup_{x\in B_r(0)}|u(x)-u(0)|+\sup_{x\in B_r(0)}|\langle p,x\rangle|\leq 2r\|\nabla u\|_{L^{\infty}(B_{\beta})} \label{eq:Mr/2r}\end{equation} so that from \eqref{eq:finalvol} we get \[|K\setminus K_r|\leq 2r\|\nabla u\|_{L^{\infty}(B_{\beta})}\H^{N-1}(\{u\leq\sigma_r\})\leq 2r\|\nabla u\|_{L^{\infty}(B_{\beta})}\H^{N-1}(\Om)\]which yields $|K\setminus K_r|\leq\eps_K$ for $r<\eps_K/(2\H^{N-1}(\Om)\|\nabla u\|_{L^{\infty}(B_{\beta})})$.

\begin{center} \textbf{(ii) Estimate from below} \end{center} 
We now deal with estimating from below the perimeter variation.  In the view of \eqref{eq:finalvol}, if $r\leq  r_{0}(\text{diam}(\Om),\eps_K,\|\nabla u\|_{L^{\infty}(B_{\beta})})$ then $K_r$ has non-empty interior, which we will suppose from now on. Let us start by showing that \begin{equation}P(K)-P(K_r)\geq \H^{N-1}(\widehat{u}(\Om))-\H^{N-1}(\widehat{v_r}(\Om)\cap \partial K_r) \label{eq:shapeperim}\end{equation} where we set \begin{equation}\nonumber\forall x\in \Om, \ \widehat{u}(x):=(x,u(x)), \ v_{r}(x)=\max\{u,\sigma_{r}\}(x), \; \widehat{v_r}(x):=(x,v_{r}(x)).\end{equation}  
We have
 \begin{eqnarray*}
P(K)-P(K_r)&=&\H^{N-1}(\partial K)-\H^{N-1}(\partial K_r)\\
&=&\H^{N-1}(\partial K\cap \widehat{u}(\Om))+\H^{N-1}(\partial K\cap \widehat{u}(\Om)^c)-\H^{N-1}(\partial K_{r}\cap \widehat{v_{r}}(\Om))\\&&-\H^{N-1}(\partial K_{r}\cap \widehat{v_{r}}(\Om)^c) \\ &=&\H^{N-1}(\widehat{u}(\Om))-\H^{N-1}(\partial K_{r}\cap \widehat{v_{r}}(\Om))+\H^{N-1}(\partial K\cap \widehat{u}(\Om)^c)\\&&-\H^{N-1}(\partial K_{r}\cap \widehat{v_{r}}(\Om)^c)
\end{eqnarray*} using in the third line that $\widehat{u}(\Om)\subset \partial K$.
If we show 
\begin{equation}\label{eq:inclbelow} \partial K_r\cap \widehat{v_r}(\Om)^c\subset \partial K\cap\widehat{u}(\Om)^c\end{equation}
then we obtain \eqref{eq:shapeperim}. 
Therefore, let $\widehat{x}\in \partial K_r\cap \widehat{v_r}(\Om)^c$. As $\partial K_{r}\subset K$, we first want to show that $\widehat{x}\notin \Int(K)$. Assume by contradiction that $\widehat{x}\in \Int(K)$, then as
\begin{equation}\label{eq:Krdecomp} \partial K_r=\partial (K\cap H_{r}^+)=(K\cap \partial H_{r}^+)\cup (\partial K\cap H_{r}^+)=(K\cap H_r)\cup\left(\partial K\cap H_r^+\setminus H_r\right)\end{equation}
(the second equality comes from the fact that $K_{r}$ and $H_{r}^{+}$ are closed)
 we must have $\widehat{x}\in H_r$. But then $\widehat{x}\in H_r\cap\text{Int}(K)$ and we deduce that there exists $x\in \Om$ such that $\widehat{x}=(x,\sigma_r(x))$ with $\sigma_r(x)>u(x)$, thus getting $v_r(x)=\sigma_r(x)$ and $\widehat{x}=\widehat{v_r}(x)$, which is a contradiction. 
Now, as $\widehat{x}\in \partial K_r\subset H_r^+$, assuming by contradiction that there exists $x\in \Om$ such that $\widehat{x}=(x,u(x))$ leads to $u(x)\geq\sigma_r(x)$, implying again the contradiction $\widehat{x}=\widehat{v_r}(x)$. This  concludes the proof of \eqref{eq:inclbelow} and \eqref{eq:shapeperim}.

Let us rewrite \begin{align}\label{eq:vr(Omr)}\widehat{v_r}(\Om)\cap\partial K_r&=\widehat{v_r}(\Om)\cap K=\widehat{v_r}(\Om_r)\end{align} by setting $\Om_r:=\widehat{v_r}^{-1}(K)\subset \Om$ (see Figure \ref{fig:cutting}).  The first equality of \eqref{eq:vr(Omr)} is justified in the following way: first, as $\partial K_r\subset K$, then $\widehat{v_r}(\Om)\cap\partial K_r\subset \widehat{v_r}(\Om)\cap K$. Second, if $\widehat{x}\in \widehat{v_r}(\Om)\cap K$, let us write $\widehat{x}=(x,v_r(x))$ for some $x\in \Om$. Then either $\sigma_r(x)\geq u(x)$, giving $\widehat{x}=\widehat{v_r}(x)=\widehat{\sigma_r}(x)\in H_r$ ; as $\widehat{x}\in K$, we get that $\widehat{x}\in K\cap H_r\subset \partial K_r$ thanks to \eqref{eq:Krdecomp}. Else, $u(x)> \sigma_r(x)$ so that $\widehat{x}=\widehat{u}(x)\in \partial K\cap H_r^+\setminus H\subset \partial K_r$ using again \eqref{eq:Krdecomp}. \\ We then get from \eqref{eq:shapeperim} \begin{equation} P(K)-P(K_r)\geq \H^{N-1}(\widehat{u}(\Om))-\H^{N-1}(\widehat{v_r}(\Om_r)) \geq\H^{N-1}(\widehat{u}(\Om_r))-\H^{N-1}(\widehat{v_r}(\Om_r))\label{eq:shapeperim2}\end{equation}
We will rewrite the right hand side of \eqref{eq:shapeperim2} using the classical formula for the perimeter of a Lipschitz graph, but we start  
by showing two importants features of $\Om_r$. Let us note here that the introduction of $\Om_r$ is not necessary if $N=2$, while it is meaningful for $N\geq3$ (see Remark \ref{th:rkHolder1}).
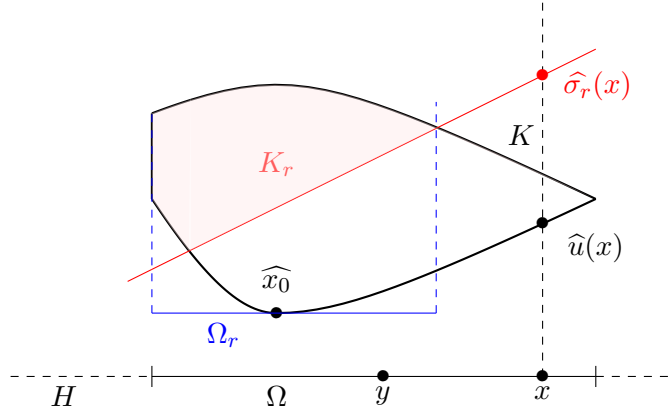
\begin{figure}
\centering
 \begin{tikzpicture}[scale=1.4]
\draw(-1.172,-0.6) -- (3, -0.6);
\draw (-1.172,-0.5) -- (-1.172,-0.7);
\draw(3,-0.5) -- (3,-0.7);
\draw(0, -0.6)node[below]{$\Omega$};
\draw[thick,domain=-1.172:-0.82, variable=\x, name path = h] plot ({\x}, {2*(sqrt(1+\x*\x)-1)});
\draw[thick,domain=-0.82:0, variable=\x] plot ({\x}, {2*(sqrt(1+\x*\x)-1)});
\draw[thick,domain=0:3, variable=\x] plot ({\x}, {0.5*(sqrt(1+\x*\x)-1)});
\draw[thick,domain=-0.81:3, name path = g, variable=\x] plot ({\x}, {2*1.081-0.5*(sqrt(1+\x*\x)-1)});
\draw[thick,domain=-1.172:-0.81, name path = i, variable=\x] plot ({\x}, {2*1.081-0.5*(sqrt(1+\x*\x)-1)});
\draw[thick]  (-1.172,1.081) -- (-1.172,1.8917);

\draw (0,0) node{$\bullet$} ;
\draw (0,0.1) node[above]{$\widehat{x_0}$};
\draw (2.3,1.5) node[above]{$K$} ;
\draw[red] (0,1.2) node[above]{$K_r$} ;
\draw (2.5,-0.6) node[below]{$x$};
\draw (2.5,-0.6) node{$\bullet$} ;
\draw (1,-0.6) node[below]{$y$}  ;
\draw (1,-0.6) node{$\bullet$} ;
\draw (-2,-0.6) node[below]{$H$};
\draw[dashed] (2.5,-0.6) -- (2.5,3);
\draw[dashed] (-2.5,-0.6) -- (-1.172,-0.6);
\draw[dashed] (3,-0.6) -- (3.7,-0.6);
\draw[blue] (-1.172,0) -- (1.5,0);
\draw[red](2.6,2.15) node[right]{$\widehat{\sigma_r}(x)$};
\draw[red](2.5,2.25) node{$\bullet$};
\draw(2.5,0.85) node{$\bullet$};
\draw(3,0.85) node[below]{$\widehat{u}(x)$};
\draw[red,domain=-1.4:-1.172]plot({\x},{0.5*\x+1});
\draw[red,domain=-1.172:-0.81]plot({\x},{0.5*\x+1});
\draw[red,domain=1.5:3]plot({\x},{0.5*\x+1});
\draw[red,domain=-0.81:1.5, name path = f]plot({\x},{0.5*\x+1});
\tikzfillbetween[of=f and g]{red!10, opacity=0.4}; 
\tikzfillbetween[of=h and i]{red!10, opacity=0.4}; 
\draw[blue] (-0.5,0) node[below] {$\Omega_r$};
\draw[blue,dashed] (-1.172,0) -- (-1.172,1.9);
\draw[blue,dashed] (1.5,0) -- (1.5,2);
\end{tikzpicture}
\caption{\label{fig:cutting} Cutting procedure}
\end{figure}

\noindent{\bf $\Om_r$ is convex:} 
Let us show that $\Om_r=\pi_H(K_r)$, where $\pi_H(K_r)$ is the orthogonal projection over $H$ of the convex set $K_r$, which will give right away that $\Om_r$ is convex. First, if $x\in \Om_r$ then $x\in \Om$ with $(x,v_r(x))\in K$, and $t=v_r(x)\geq\sigma_r(x)$ satisfies that $(x,t)\in K$ with $t\geq\sigma_r(x)$, providing $(x,t)\in K_r$ hence $x\in \pi_H(K_r)$. Conversely, let $x\in H$ be such that there exists $t\in\R$ with $(x,t)\in K_r$, implying that $(x,t)\in K$ with $t\geq \sigma_r(x)$. Note that $x\in \Om$ by definition of $\Om$ and using that $K_r\subset K$. If $u(x)\geq\sigma_r(x)$ then $\widehat{v_r}(x)=\widehat{u}(x)\in K$ which means that $x\in \Om_r$. Else, $t$ is such that $t\geq\sigma_r(x)\geq u(x)$, which gives that $\widehat{v_r}(x)=\widehat{\sigma_r}(x)\in K$ by convexity of $K$, since $(x,t)\in K$ and $(x,u(x))\in K$. Thus $\Om_r=\pi_H(K_r)$ and $\Om_r$ is convex.

%

\noindent{\bf $\Om_r$ has non-empty interior:} 
We now prove that $\Om_r$ has non-empty interior with a ball which has size uniform in $y$ (which we set to be $y=0$), {\it i.e.} that there exists $\widetilde{\beta}\in(0,\frac{\beta}{2})$ such that
\begin{equation}\label{eq:BbetaOmr}\forall r\in (0,\widetilde{\beta}), \ B_{\widetilde{\beta}}(0)\subset \Om_r\end{equation} 
Given $\widetilde{\beta}\in(0,\beta/2)$ that will be chosen later, using \eqref{eq:Mr/2r} and $|p|\leq \|\nabla u\|_{L^{\infty}(B_{\beta})}$ we get for any $x\in B_{\widetilde{\beta}}(0)$ and $r\in(0,\widetilde{\beta})$, 
  \begin{align}\nonumber 
  \sigma_r(x)&=u(0)+\langle p,x\rangle+\frac{M_r}{2r}(\langle q_r,x\rangle+r)\\
  \nonumber &\leq u(0)+\|\nabla u\|_{L^{\infty}(B_{\beta})}\left(2\widetilde{\beta}+ r\right)
\leq u(x)+4\widetilde{\beta}\|\nabla u\|_{L^{\infty}(B_{\beta})}\end{align}  
If we choose $\widetilde{\beta}$ such that $4\widetilde{\beta}\|\nabla u\|_{L^{\infty}(B_{\beta})}\leq c(\beta)$  where $c(\beta)$ satifies \eqref{eq:cbeta}, we deduce 
  \begin{equation}\label{eq:sigmarbeta}\forall r\in (0,\widetilde{\beta}),\ \sigma_r\leq u+c \text{ in } B_{\widetilde{\beta}}(0)\end{equation} 
  We are now in a position to prove \eqref{eq:BbetaOmr}. 
  Let $x\in B_{\widetilde{\beta}}(0)$: if $u(x)\geq\sigma_r(x)$, then $\widehat{v_r}(x)=\widehat{u}(x)\in K$ hence $x\in \Om_r$. 
  Otherwise $u(x)\leq\sigma_r(x)$, and then $u(x)\leq \sigma_r(x)\leq u(x)+c$ thanks to \eqref{eq:sigmarbeta}; using \eqref{eq:cbeta} we get in fact $\widehat{v_r}(x)=(x,\sigma_r(x))\in K$, meaning that $x\in \Om_r$. \\

\noindent {\bf Rewriting \eqref{eq:shapeperim2} with Lipschitz graphs}: We claim now that \eqref{eq:shapeperim2} rewrites 
   \begin{align}\label{eq:changeLip} P(K)-P(K_r)&\geq \int_{\Om_r}\left[\sqrt{1+|\nabla u|^2}-\sqrt{1+|\nabla v_r|^2}\right]d\H^{N-1}\\& = \int_{\text{Int}(\Om_r)}\left[\sqrt{1+|\nabla u|^2}-\sqrt{1+|\nabla v_r|^2}\right]d\H^{N-1}\nonumber \end{align}
   If \eqref{eq:changeLip} holds true, the second line comes from the fact that $|\Om_r|=|\text{Int}(\Om_r)|$, since $\Om_r$ is convex. Now, from \cite[Remark 2.72]{AFP} one has 
   \begin{equation}\H^{N-1}(\widehat{u}(\omega))=\int_{\omega}\sqrt{1+|\nabla u|^2}d\H^{N-1}\nonumber\end{equation}
   if $\omega\subset \Om_r$ with $u_{|\omega}$ Lipschitz. As $u$ is not necessarily Lipschitz over the whole of $\Om_r$, let us take an increasing sequence $(\Om_n)$ of open subsets of $\Om$ with $\Om_n\Subset \Om$ for each $n$ and $\cup_n\Om_n=\Om$. 
   Then setting $\Om_r^n:=\Om_n\cap \Om_r$, the sequence $(\Om_r^n)$ is still increasing with $\cup_n\Om_r^n=\Om_r$. 
   As $u_{|\Om_r^n}$ is now Lipschitz we can write
   \begin{equation}\H^{N-1}(\widehat{u}(\Om_r^n))=\int_{\Om_r^n}\sqrt{1+|\nabla u|^2}d\H^{N-1}\nonumber\end{equation}
   The monotonous convergence theorem applies on each side of the equation, yielding at the limit
   \begin{equation}\H^{N-1}(\widehat{u}(\Om_r))=\int_{\Om_r}\sqrt{1+|\nabla u|^2}d\H^{N-1}\label{eq:changeLipu}\end{equation}
   The same goes for $\H^{N-1}(\widehat{v_r}(\Om_r))$, thus getting \eqref{eq:changeLip}.\\
   
\noindent {\bf Estimate from below of \eqref{eq:changeLip}:} Let 
   \begin{equation}\nonumber \omega_r:=\{x\in \text{Int}(\Om_r),\ u(x)\leq \sigma_r(x)\} \subset\{u\leq\sigma_r\}\end{equation} 
   and $\widetilde{\omega_r}$ be the projection of $\omega_r$ onto $\Gamma_r:=\{\langle q_r,\cdot\rangle =0\}$.

 Thanks to \eqref{eq:omegar} we have\begin{equation}\label{eq:upomegar} \omega_r\subset \{|\langle q_r,\cdot\rangle|\leq r\}\end{equation}
On the other hand, if $0<r<\widetilde{\beta}$ we have $B_r(0)\subset \Om_r$ using \eqref{eq:BbetaOmr}, so that \eqref{eq:omegar} again gives \begin{equation}\label{eq:downomegar} B^+_{r/2}(0)\subset \omega_r\end{equation} for such $r$.
     
The local strong convexity of the function $\xi\in \R^N\mapsto \sqrt{1+\xi^2}$ combined with the fact that for any $r\in(0,\beta/2)$\begin{equation}\nonumber|\nabla \sigma_r| =\left|p+\frac{M_{r}}{2r}q_{r}\right| \leq 2\|\nabla u\|_{L^{\infty}(B_{\beta})}\end{equation} 
(where we used \eqref{eq:Mr/2r})  enable to find $\alpha=\alpha(\|\nabla u\|_{L^{\infty}(B_{\beta})})>0$ such that for any $r\in (0,\beta/2)$, 
      \begin{equation}\nonumber\sqrt{1+|\nabla u|^2}-\sqrt{1+|\nabla v_r|^2}\geq \frac{\nabla v_r\cdot (\nabla u-\nabla v_r)}{\sqrt{1+|\nabla v_r|^2}}+\frac{\alpha}{2}|\nabla u-\nabla v_r|^2\ \text{over } B_{\beta}\end{equation}
       Note also the weaker (but global) estimate \begin{equation}\sqrt{1+|\nabla u|^2}-\sqrt{1+|\nabla v_r|^2}\geq \frac{\nabla v_r\cdot (\nabla u-\nabla v_r)}{\sqrt{1+|\nabla v_r|^2}}\ \text{ over } \Om_r\nonumber\end{equation}Since $\H^{N-1}(\widehat{u}(\Om_r))\leq \H^{N-1}(\partial K)<+\infty$, \eqref{eq:changeLipu} implies in particular that $\sqrt{1+|\nabla u|^2}\in L^1(\Om_r)$, hence also that $|\nabla u|\in L^1(\Om_r)$. This ensures $u\in W^{1,1}(\Om_r)$ as we also have $u\in L^{\infty}(\Om_r)$. Let us integrate the two previous estimates and use \eqref{eq:changeLip} to get 
      \begin{equation}\label{eq:belowalmostfinal} P(K)-P(K_r)\geq \int_{\omega_r}\frac{\nabla \sigma_r\cdot (\nabla u-\nabla \sigma_r)}{\sqrt{1+|\nabla \sigma_r|^2}}d\H^{N-1}+\frac{\alpha}{2}\int_{\omega_r\cap B_{\beta}}|\nabla u-\nabla \sigma_r|^2d\H^{N-1}\end{equation} 
      Recalling \eqref{eq:u_sig_above2} we have, 
      \begin{equation}\label{eq:u_sig_above} 0\leq\sigma_r-u\leq M_r\text{ over } \omega_r.\end{equation} 
      As $\nabla \sigma_{r}$ is a fixed vector we get by integrating by parts
      \begin{equation}\nonumber -\int_{\omega_r}\frac{\nabla \sigma_r\cdot (\nabla u-\nabla \sigma_r)}{\sqrt{1+|\nabla \sigma_r|^2}} d\H^{N-1} = -\int_{\partial \omega_r}\frac{\nabla \sigma_r\cdot n}{\sqrt{1+|\nabla \sigma_r|^2}}(u- \sigma_r)d\H^{N-2}\leq \int_{\partial \omega_r}(\sigma_r-u)d\H^{N-2}\end{equation} 
      where $n$ denotes the outer unit normal to $\partial \omega_r$. Observing that \eqref{eq:u_sig_above} must also hold on $\partial \omega_r$, and since $u=\sigma_{r}$ on $\partial\omega_{r}\cap \Int(\Om_{r})$ we deduce
      \begin{equation}-\int_{\omega_r}\frac{\nabla \sigma_r\cdot (\nabla u-\nabla \sigma_r)}{\sqrt{1+|\nabla \sigma_r|^2}}d\H^{N-1}\leq M_r\H^{N-2}\big[\partial \omega_r\cap \partial \Om_r\big]\nonumber
      \end{equation}
      Moreover, let us show that,
     \begin{equation} \H^{N-2}\big[\partial \omega_r\cap \partial \Om_r\big]\leq \frac{(N-1)\H^{N-1}(\omega_r)}{d(y,\partial \Om_r)}\label{eq:Stokes}\end{equation} so that for $r\in (0,\widetilde{\beta})$ we get \begin{equation}-\int_{\omega_r}\frac{\nabla \sigma_r\cdot (\nabla u-\nabla \sigma_r)}{\sqrt{1+|\nabla \sigma_r|^2}}d\H^{N-1} \leq M_r\frac{(N-1)\H^{N-1}(\omega_r)}{d(y,\partial \Om_r)}\leq M_r\frac{(N-1)}{\widetilde{\beta}}\H^{N-1}(\omega_r)\leq CM_rr\H^{N-2}(\widetilde{\omega_r})\label{eq:lineartermvol}\end{equation}
  with $C= \frac{2(N-1)}{\widetilde{\beta}}>0$
  , where we used 
  \eqref{eq:BbetaOmr}
   and \eqref{eq:upomegar}. 

 Estimate \eqref{eq:Stokes} was proven in \cite{CCL}; we reproduce the argument for the convenience of the reader: from Stokes formula,  \begin{align}\nonumber\H^{N-1}(\omega_{r})=\frac{1}{N-1}\int_{\omega_{r}}\div(x)d\H^{N-1}(x)&=\frac{1}{N-1}\int_{\partial\omega_{r}}\underbrace{\langle x,n(x)\rangle}_{\geq 0}d\mathcal{H}^{N-2}(x)\\&\geq \frac{1}{N-1}\int_{\partial\omega_{r}\cap\partial\Om_r}\langle x,n(x)\rangle d\mathcal{H}^{N-2}(x)\nonumber\end{align} and notice that for any $x\in \partial \omega_r\cap\partial \Om_r$ such that $n(x)$ is well-defined we have $\langle x,n(x)\rangle=d(0,H_x)$ where $H_x$ is the tangent hyperplane at $x$ to $\partial \omega_r$, which is also the tangent hyperplane at $x$ to $\partial \Om_r$. This gives $\langle x,n(x)\rangle\geq d(0,\partial \Om_r)$ using the convexity of $\Om_r$, implying \eqref{eq:Stokes}.

\begin{figure}
\centering
\caption{\label{fig:omegar} Localization of $\omega_r$}\includegraphics{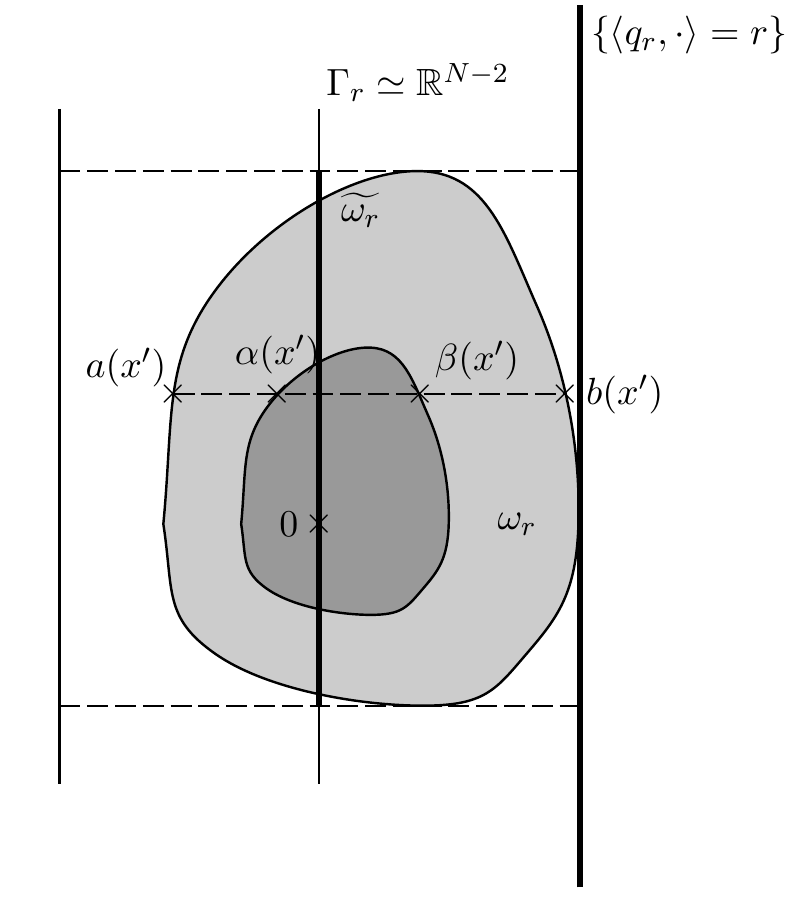}
\end{figure}

\noindent{\bf $L^2$ Estimate from below of $\nabla (u-\sigma_r)$:} We now estimate from below the term 
\begin{equation}\int_{\omega_r\cap B_{\beta}}|\nabla u-\nabla \sigma_r|^2 d\H^{N-1}\geq \int_{\omega_r\cap B_{\widetilde{\beta}}(0)}|\nabla u-\nabla \sigma_r|^2 d\H^{N-1}\nonumber\end{equation} We shall prove\begin{equation}\label{eq:shapebelowperim}\forall r\in (0,\widetilde{\beta}/2),\ \int_{\omega_r\cap B_{\widetilde{\beta}}(0)}|\nabla u-\nabla \sigma_r|^2\geq \delta\frac{M_r^2}{r}\H^{N-2}(\widetilde{\omega_r})\end{equation} for some $\delta=\delta(\widetilde{\beta},\text{diam}(\Om))$. This estimate was proved in \cite{CCL} but we reproduce the proof for the convenience of the reader. \\ \\ Let $\gamma=\frac{\widetilde{\beta}}{2\diam(\Om)}>0$ be such that if $r\in (0,\widetilde{\beta}/2)$ we have thanks to \eqref{eq:BbetaOmr} \begin{equation}\label{eq:incl_gamma} [-r,r]q_r\times \gamma\widetilde{\omega_r}\Subset B_{\widetilde{\beta}}(0)\subset \text{Int}(\Om_r).\end{equation}where $[-r,r]q_r\times \gamma\widetilde{\omega_r}:=\{tq_r+\gamma x',\ t\in[-r,r], \ x'\in \widetilde{\omega_r}\}$. Setting \begin{equation}\nonumber A_{\gamma/2}:=\frac{\gamma}{2}\widetilde{\omega_r},\end{equation} then for any $x'\in A_{\gamma/2}$ we can write \[\begin{cases}(x'+\R q_r)\cap \omega_r=[(a(x'),x'),(b(x'),x')]\\ (x'+\R q_r)\cap \frac{1}{2}\omega_r=[(\alpha(x'),x'),(\beta(x'),x')]\end{cases}\]for some functions $a,b$ and $\alpha,\beta$ defined over $A_{\gamma/2}$, with $(a(x'),x')$ and $(b(x'),x') \in \partial \omega_r\cap B_{\widetilde{\beta}}(0)$ thanks to \eqref{eq:incl_gamma} and the right inclusion of \eqref{eq:omegar} (see Figure \ref{fig:omegar}).  In particular it holds $(u-\sigma_r)(a(\cdot),\cdot)=(u-\sigma_r)(b(\cdot),\cdot)=0$.

Since $u\leq \sigma_r$ over $\omega_r$ and since $u-\sigma_r$ is convex we get for every $h\in \omega_{r}$
 \begin{equation}\nonumber
 (u-\sigma_r)(\frac{h}{2})\leq \frac12(u-\sigma_r)(0)+\frac12(u-\sigma_r)(h)\leq \frac12(u-\sigma_r)(0)=-\frac{M_r}{4}, 
 \end{equation}  that is  \begin{equation} u-\sigma_r\leq-M_r/4 \text{ over } \frac{1}{2}\omega_r\label{eq:u_sig} \end{equation}
Set $0<r<\widetilde{\beta}/2$ and $x'\in A_{\gamma/2}$. We apply the inequality \begin{equation}\nonumber\text{For } t_0\leq t_1 \text{ and }f\in H^1([t_0,t_1]),  \ \int_{t_0}^{t_1}f'^2(t)dt\geq \frac{(f(t_1)-f(t_0))^2}{t_1-t_0},\end{equation} to $(u-\sigma_r)(\cdot,x')$ over the segments $[a(x'),\alpha(x')]$ and $[\beta(x'),b(x')]$, each of which has length smaller than $2r$ (see \eqref{eq:omegar}), obtaining \begin{align}\nonumber\int_{a(x')}^{b(x')}(\partial_{q_r}(u-\sigma_{r}))^2dt&\geq \int_{a(x')}^{\alpha(x')}(\partial_{q_r}(u-\sigma_{r}))^2dt+\int_{\beta(x')}^{b(x')}(\partial_{q_r}(u-\sigma_{r}))^2dt\\&\geq \frac{[u(\alpha(x'),x')-\sigma_r(\alpha(x'),x')]^2}{\alpha(x')-a(x')}+\frac{[u(\beta(x'),x')-\sigma_r(\beta(x'),x')]^2}{b(x')-\beta(x')}\geq \frac{M_{r}^2}{16r}\nonumber\end{align}  since $(u-\sigma_r)(a(x'),x')=(u-\sigma_r)(b(x'),x')=0$, and where we also used \eqref{eq:u_sig} in the last inequality.

We write $|\nabla (u-\sigma_r)|\geq |\partial_{q_r} (u-\sigma_r)|$ and integrate the above estimate over $x'\in A_{\gamma/2}$; this yields \begin{align}\nonumber\int_{\omega_r\cap B_{\widetilde{\beta}}(0)}|\nabla (u-\sigma_{r})|^2d\H^{N-1}&\geq \int_{A_{\gamma/2}}\left(\int_{a(x')}^{b(x')}(\partial_{q_r}(u-\sigma_{r}))^2dt\right)d\H^{N-2}(x')\\ \nonumber&\geq \frac{M_{r}^2\H^{N-2}(A_{\gamma/2})}{16r}\\ \nonumber&\geq \delta\frac{M_{r}^2\H^{N-2}(\widetilde{\omega_{r}})}{r}\end{align} for some constant $\delta=\delta(\widetilde{\beta},\text{diam}(\Om))>0$, getting \eqref{eq:shapebelowperim}.\\ 

\noindent  {\bf Conclusion of the estimate from below: }Plugging \eqref{eq:shapebelowperim} and \eqref{eq:lineartermvol} into \eqref{eq:belowalmostfinal} gives \begin{equation} \label{eq:shapefinalperim}\forall r\in (0,\widetilde{\beta}/2), \ \frac{M_r^2}{r}\H^{N-2}(\widetilde{\omega_r})\leq C\big(P(K)-P(K_r)+M_rr\H^{N-2}(\widetilde{\omega_r})\big) \end{equation} where $C=C(N,d(x_0,\partial \Om), c(\beta), \|\nabla u\|_{L^{\infty}(B_{\beta})},\text{diam}(\Om))$, which completes the proof of the estimate from below.

\begin{center}{\bf Conclusion}\end{center} 
We claim that for all $r\in (0,\widetilde{\beta})$ it holds\begin{align}\H^{N-1}(\{u\leq\sigma_r\})&=\H^{N-1}(\omega_r)+\H^{N-1}(\{u\leq\sigma_r\}\setminus\omega_r)\nonumber\\&\leq (1+C)\H^{N-1}(\omega_r)\label{eq:kappar}\end{align} with $C=\gamma^{1-N}$, where $\gamma=\frac{\widetilde{\beta}}{2\diam(\Om)}$ was introduced to obtain \eqref{eq:incl_gamma}. Indeed, denoting by $\kappa_r:=\{u\leq\sigma_r\}\setminus\omega_r$, it suffices to show \begin{equation}\label{eq:gammaab}\forall r\in (0,\widetilde{\beta}), \ \gamma\kappa_r\subset \omega_r\end{equation}
Let $x\in \kappa_r$; we have $(u-\sigma_r)(\gamma x)\leq 0$ using the convexity of $u-\sigma_r$, and furthermore $\gamma x\in B_{\widetilde{\beta}}(0)\subset \text{Int}(\Om_r)$ (see \eqref{eq:BbetaOmr}). This provides $\gamma x\in \omega_r$, allowing to conclude that \eqref{eq:gammaab} holds.\\\\ Injecting \eqref{eq:kappar} into \eqref{eq:finalvol} and using \eqref{eq:upomegar} we get that there is a constant $C=C(\widetilde{\beta},\diam(\Om))$ such that \begin{equation} \forall r\in (0,\widetilde{\beta}), \ |K\setminus K_r|\leq CM_rr\H^{N-2}(\widetilde{\omega_r})\label{eq:finalvol2}\end{equation} 
Gathering \eqref{eq:shapefinalperim} and \eqref{eq:finalvol2} and recalling \eqref{eq:shape_variation} finally provides the existence of $r_0=r_0(\widetilde{\beta},\text{diam}(\Om),\eps_K, \|\nabla u\|_{L^{\infty}(B_{\beta})})$ such that \begin{equation}\forall r\in(0,r_0),\ \frac{M_r^2}{r}\H^{N-2}(\widetilde{\omega_r})\leq CM_rr\H^{N-2}(\widetilde{\omega_r})\label{eq:finalshape}\end{equation} where $C=C(N, \Lambda_K, d(x_0,\partial \Om), c(\beta), \|\nabla u\|_{L^{\infty}(B_{\beta})},\text{diam}(\Om))$. Thanks to \eqref{eq:downomegar} we can simplify by $\H^{N-2}(\widetilde{\omega_r})$ in \eqref{eq:finalshape}, to get that $M_r\leq Cr^2$. This completes the proof. 
\qed

\begin{remark}\label{th:rkHolder1} \begin{itemize}\item   If $N=2$, the proof can be simplified, as one can show that there exists some $r_0$ such that $\Om_r=\Om$ for $r< r_0$. Indeed, since $H$ is one-dimensional, the right inclusion of \eqref{eq:omegar} reads $\{x\in \Om, u(x)\leq \sigma_r(x)\}\subset [y-r,y+r]$, which shows that $\Om_r= \Om$ for small $r$ using \eqref{eq:cbeta}.

On the other hand, for $N\geq3$ it is not hard to find convex bodies $K$ such that $\Om_r\subsetneq \Om$ for each (small) $r$. For instance, let us consider $K:=C\cap B$ the intersection of the unit ball $B\subset\R^N$ with the cylinder $C:=[-1,1]\times B_{\R^{N-1}}((0,\ldots,0,1),1)$. Although it is possible to compute explicitly $\Om$ let us just notice that $\Om\supset (-1,1)\times\{0\}^{N-2}=:S$. We consider the situation where $x_0=y=0$ with $p=0\in \partial u(x_0)$, so that $l\equiv0$ in this case. Then we see that $q_r=(0,1,0,\ldots,0)$ satisfies $M_r=\sup_{B_r(0)}u=u(rq_{r})$. Note that $u\equiv 0$ along $S$, while on the other hand $\sigma_r= M_r/2>0$ over $S$. As a consequence, since $K\cap P=B\cap P$ where $P$ denotes the $(x_1,x_N)$ plan, we find $x=(x_1,0,\ldots,0)\in S$ close enough from $(-1,0,\ldots,0)$ such that $\widehat{v_r}(x)=\widehat{\sigma_r}(x)\notin K$, thus getting that $\Om_r\subsetneq \Om$ in this case.
 \item Using the same ideas as in \cite{CCL} where they study the regularity of $u$ solution of \eqref{eq:model} when $f\in L^p(\Om)$ (for some $p>N$), one can prove the $C^{1,\alpha}$ regularity of a convex body $K$ satisfying 
\begin{equation}\label{eq:qmpcgamma} \forall \widetilde{K}\in \K^N\textrm{ such that } \widetilde{K}\subset K \textrm{ and } |K\setminus\widetilde{K}|\leq\eps,\ \ P(K)\leq P(\widetilde{K})+\Lambda|K\setminus\widetilde{K}|^\gamma\end{equation}
for some $\eps>0, \Lambda\geq 0$ and $\gamma\in(1-1/N,1]$. In this case, instead of \eqref{eq:finalshape}, we derive with the same arguments \begin{equation}\nonumber \frac{M_r^2}{r}\H^{N-2}(\widetilde{\omega_r})\leq C\big(|K\Delta K_r|^{\gamma}+ M_rr\H^{N-2}(\widetilde{\omega_r})\big)\leq CM_r^{\gamma}r^{\gamma}\H^{N-2}(\widetilde{\omega_r})^{\gamma}\end{equation}Using \eqref{eq:downomegar} we get \begin{equation}M_r^{2-\gamma}\leq Cr^{(N-2)(\gamma-1)+\gamma +1}\end{equation} Direct computation gives that $(N-2)(\gamma-1)+\gamma+1>2-\gamma$ whenever $\gamma\in(1-1/N,1]$, so that the classical result \cite[Lemma 3.1]{DF} gives that $K$ is $C^{1,\alpha}$ with $\alpha=(N(\gamma-1)+1)/(2-\gamma)$. \end{itemize}\end{remark}

\subsection{Regularity with volume constraint}\label{sect:shape3}
We now focus on problems having a volume constraint, as they often appear in applications. We thus consider the problem \begin{equation} \label{eq:pbshapeconstvol} \inf\left\{P(K)+R(K),\ K\in \K^N,  K\subset D, |K|=V_0\right\}\end{equation} for some convex body $D\in \K^N$ and $0< V_0< |D|$.  Existence for this problem can be shown under the assumption \eqref{eq:hypR4} below made upon $R$ (see Theorem \ref{th:general_existence} (i)). In this section we prove that under suitable assumptions on $R$, minimizers of this problem are $C^{1,1}$. We use a penalization method to prove that these solutions are quasi-minimizer of the perimeter under convexity constraint.

 \subsubsection{Preliminaries}
 Before introducing the hypothesis which we will make upon $R$, let us recall the notion of Hausdorff distance between sets. If $A$ and $B$ are non-empty compact subsets of $\R^N$, the Hausdorff distance $d_H(A,B)$ between $A$ and $B$ is defined as the quantity \begin{equation}\nonumber d_H(A,B):=\max\left\{\sup_{x\in A}d(x,B),\sup_{x\in B}d(A,x)\right\}\end{equation} where $d(\cdot,\cdot)$ denotes the euclidean distance. The Hausdorff distance $d_{H}$ is a distance over the class of non-empty compact sets of $\R^N$. 

Let us recall two classical facts about $d_{H}$, whose proof is given in the Appendix:
\begin{prop}\label{prop:hausdorff}
Let $D\in\K^N$, $(K_n)$ be a sequence of convex bodies verifying $K_n\subset D$ for any $n\geq0$, and let $K\subset D$ be a non-empty compact convex set. Then 
\begin{enumerate}
\item We have the equivalence: \begin{equation} \label{eq:equivHausVol} d_H(K_n,K)\rightarrow0 \Longleftrightarrow |K_n\Delta K|\rightarrow0\end{equation}
\item If $d_H(K_n,K)\rightarrow0$, and $C\in\K^N$ is such that $C\subset \text{Int}(K)$ then \begin{equation}\label{eq:opensubH} C\subset K_n\;\; \text{ for large }\; n.\end{equation}
\end{enumerate}\end{prop}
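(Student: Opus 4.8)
The plan is to prove the two assertions of Proposition~\ref{prop:hausdorff} separately. The heart of the matter is the equivalence \eqref{eq:equivHausVol}, and inside it the implication ``$\Leftarrow$''; this is exactly where convexity (and non-degeneracy of $K$) enters, since for arbitrary compact sets, or when $\Int(K)=\emptyset$, convergence of volumes does \emph{not} imply Hausdorff convergence. So I would organize the argument around that point, assuming throughout part 1 that $\Int(K)\neq\emptyset$ (the relevant case).

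\noindent\textbf{Easy direction of \eqref{eq:equivHausVol}.} Set $\delta_n:=d_H(K_n,K)\to0$. By definition of the Hausdorff distance, $K_n\subset K+\delta_n\bar B_1$ and $K\subset K_n+\delta_n\bar B_1$, so
\[|K_n\Delta K|\leq\bigl(|K+\delta_n\bar B_1|-|K|\bigr)+\bigl(|K_n+\delta_n\bar B_1|-|K_n|\bigr).\]
The first term tends to $0$ by continuity of $\delta\mapsto|K+\delta\bar B_1|$; for the second, Steiner's formula together with the monotonicity of the quermassintegrals under inclusion gives, using $K_n\subset D$,
\[|K_n+\delta_n\bar B_1|-|K_n|=\sum_{j=1}^{N}\binom{N}{j}W_j(K_n)\,\delta_n^j\leq\sum_{j=1}^{N}\binom{N}{j}W_j(D)\,\delta_n^j=|D+\delta_n\bar B_1|-|D|\xrightarrow[n\to\infty]{}0,\]
which is the bound we want, uniform in $n$.

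\noindent\textbf{Hard direction of \eqref{eq:equivHausVol}.} I would argue by contradiction and compactness. If $d_H(K_n,K)\not\to0$, extract a subsequence with $d_H(K_{n_k},K)\geq\eta>0$; since all $K_{n_k}$ lie in the fixed body $D$, Blaschke's selection theorem yields a further subsequence converging in the Hausdorff distance to a nonempty compact convex set $K^\infty\subset D$ with $d_H(K^\infty,K)\geq\eta$, so $K^\infty\neq K$. Applying the easy direction to this subsequence gives $|K_{n_k}\Delta K^\infty|\to0$, and combined with the hypothesis $|K_{n_k}\Delta K|\to0$ this forces $|K^\infty\Delta K|=0$; since $K$ has nonempty interior and both sets are closed and convex, any point of the symmetric difference would carry a ball of positive measure, so $K^\infty=K$ --- a contradiction. (If one wants to avoid Blaschke: from $x_n\in K_n$ with $\dist(x_n,K)\geq\eta$, a fixed ball $B_{2\rho}(z_0)\subset K$, and the convergence $|B_\rho(z_0)\cap K_n|\to|B_\rho(z_0)|$, convexity of $K_n\supset\mathrm{conv}\bigl((B_\rho(z_0)\cap K_n)\cup\{x_n\}\bigr)$ produces a portion of $K_n$ of definite volume lying within $\eta/2$ of $x_n$, hence outside $K$; symmetrically if instead $x_n\in K$ with $\dist(x_n,K_n)\geq\eta$, using $B_{2\rho}(z_0)\subset K$. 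Either way $|K_n\Delta K|\not\to0$.)

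\noindent\textbf{The inclusion \eqref{eq:opensubH}.} Let $2\delta:=\dist(C,\R^N\setminus\Int(K))$, which is positive since $C$ is compact and contained in the open set $\Int(K)$; then $\bar B_\delta(x)\subset K$ for every $x\in C$. Take $n$ large enough that $d_H(K_n,K)\leq\delta$, so every point of $K$ lies within distance $\delta$ of $K_n$. Fix $x\in C$ and suppose $x\notin K_n$; let $p$ be the projection of $x$ onto the closed convex set $K_n$, and put $\nu:=(x-p)/|x-p|$, $t:=|x-p|>0$, so that $\langle y,\nu\rangle\leq\langle p,\nu\rangle$ for all $y\in K_n$. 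Then $x':=x+\delta\nu\in\bar B_\delta(x)\subset K$ satisfies $\dist(x',K_n)\geq\langle x',\nu\rangle-\langle p,\nu\rangle=t+\delta>\delta$, contradicting $\dist(x',K_n)\leq\delta$. Hence $x\in K_n$, and since $x\in C$ was arbitrary, $C\subset K_n$. The only genuinely nontrivial step is the hard direction of \eqref{eq:equivHausVol}: upgrading convergence of volumes to Hausdorff convergence, which is precisely where convexity and non-degeneracy are unavoidable (via Blaschke compactness plus the rigidity ``convex bodies equal up to a null set are equal'', or via the fact that a convex set filling most of a ball contains a ball of comparable radius).
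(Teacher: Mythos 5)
Your proof is correct, and in two of its three pieces it takes a genuinely different route from the paper. For the hard direction of \eqref{eq:equivHausVol} you do exactly what the paper does: Blaschke compactness on a subsequence bounded away from $K$, the easy direction applied to the limit $K^\infty$, and the rigidity statement that two compact convex sets with $|K^\infty\Delta K|=0$ and $\Int(K)\neq\emptyset$ coincide. You are in fact more careful than the paper here: you make explicit that this last step (and hence the whole implication) requires $\Int(K)\neq\emptyset$, whereas the paper silently passes from $|K_n\Delta K|\to0$ and $|K_n\Delta K^\infty|\to 0$ to $K^\infty=K$, which fails for degenerate $K$ (e.g.\ $K$ a point and $K_n$ shrinking balls located elsewhere in $D$); your restriction to the nondegenerate case is the right reading of the statement as it is used in the paper. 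For the easy direction the paper simply cites \cite[Proposition 2.4.3 (ii)]{BB}, while you give a self-contained quantitative proof via $K_n\subset K+\delta_n\bar B_1$, $K\subset K_n+\delta_n\bar B_1$, Steiner's formula and monotonicity of the quermassintegrals under $K_n\subset D$; this buys uniformity in $n$ at the cost of invoking slightly heavier convex-geometry machinery. For \eqref{eq:opensubH} your argument is genuinely different: the paper takes $\eps:=d(C,\partial K)$, uses $\partial K_n\to\partial K$ in Hausdorff distance and the connectedness/convexity of $C$ to force $C\subset\R^N\setminus K_n$ and derive a contradiction in the limit, whereas you argue pointwise and quantitatively, projecting a hypothetical $x\in C\setminus K_n$ onto $K_n$ and pushing $x$ by $\delta$ along the outward normal to produce a point of $K$ at distance $>\delta$ from $K_n$. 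Your version does not need convexity of $C$ (only compactness and $C\subset\Int(K)$) nor the boundary-convergence lemma, and it gives an explicit threshold $d_H(K_n,K)\leq\delta$; both arguments are valid.
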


We now introduce a new assumption on $R$ which is slightly stronger than \eqref{eq:hypR3}, as seen in Proposition \ref{th:stronghypR} below: for any $D'\in \K^N$  with $D'\subset D$ we set \begin{equation}\label{eq:defKD'D}\K^N_{D',D}:=\{K\in \K^N, \ D'\subset K\subset D\}\end{equation} 
and we assume \begin{equation}\label{eq:hypR4} \forall D'\subset D\in \K^N, \ \exists C_{D',D}>0, \ \forall \big( K_1,K_2\in \K^N_{D',D}, \ K_1\subset K_2\big), \ \ |R(K_2)-R(K_1)|\leq C_{D',D}|K_2\setminus K_1|\end{equation}  
 \begin{prop}\label{th:stronghypR} If $R$ satifies \eqref{eq:hypR4} then $R$ satisfies \eqref{eq:hypR3}.\end{prop}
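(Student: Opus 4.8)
The plan is to derive \eqref{eq:hypR3} from \eqref{eq:hypR4} by applying the latter with the pair $(D',D)$ specialized to $(D',K)$, where $D'$ is a fixed convex body contained in the interior of the given body $K$. This is legitimate provided we know that any convex competitor $\widetilde K\subset K$ which is close to $K$ in volume necessarily contains $D'$; establishing this is the only real step.

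So, fix $K\in\K^N$. Since $\Int(K)\neq\emptyset$, choose a convex body $D'\in\K^N$ with $D'\subset\Int(K)$ (for instance a small closed ball around an interior point of $K$), and set $D:=K$; then $D'\subset D$ and $D',D\in\K^N$. Assumption \eqref{eq:hypR4} applied to this pair yields a constant $C_{D',D}>0$, and we will show that \eqref{eq:hypR3} holds with $C_K:=C_{D',D}$ and with $\eps_K$ provided by the claim below.

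The claim is: there exists $\eps_K>0$ such that every $\widetilde K\in\K^N$ with $\widetilde K\subset K$ and $|K\setminus\widetilde K|\leq\eps_K$ satisfies $D'\subset\widetilde K$. I would prove it by contradiction. If it failed, then for each $n\in\N^*$ there would be $\widetilde K_n\in\K^N$ with $\widetilde K_n\subset K$, $|K\setminus\widetilde K_n|\leq 1/n$, and $D'\not\subset\widetilde K_n$. Since $\widetilde K_n\subset K$ we have $|\widetilde K_n\Delta K|=|K\setminus\widetilde K_n|\to0$, so Proposition~\ref{prop:hausdorff}(1) (used with ambient body $D=K$) gives $d_H(\widetilde K_n,K)\to0$; as $D'\subset\Int(K)$, Proposition~\ref{prop:hausdorff}(2) then forces $D'\subset\widetilde K_n$ for all large $n$, contradicting $D'\not\subset\widetilde K_n$.

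To conclude, take any $\widetilde K\in\K^N$ with $\widetilde K\subset K$ and $|K\setminus\widetilde K|\leq\eps_K$. By the claim $D'\subset\widetilde K\subset K=D$, hence $\widetilde K,K\in\K^N_{D',D}$ with $\widetilde K\subset K$, and \eqref{eq:hypR4} gives $|R(K)-R(\widetilde K)|\leq C_{D',D}|K\setminus\widetilde K|=C_K|K\setminus\widetilde K|$; in particular $R(\widetilde K)-R(K)\leq C_K|K\setminus\widetilde K|$, which is \eqref{eq:hypR3}. The only delicate point is the claim, i.e.\ that a convex subset of $K$ of almost full volume must swallow a prescribed interior body; I reduce it to the already-established Hausdorff/volume equivalence \eqref{eq:equivHausVol} together with \eqref{eq:opensubH}, but one could equally argue directly by separating a missed interior point from $\widetilde K$ with a hyperplane and bounding below the volume of the resulting cap of $K$ in terms of $\dist(D',\partial K)$.
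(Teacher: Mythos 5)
Your proof is correct and follows essentially the same route as the paper's: you specialize \eqref{eq:hypR4} to the pair $(D',K)$ with $D'\subset\Int(K)$ and use Proposition~\ref{prop:hausdorff} (that is, \eqref{eq:equivHausVol} together with \eqref{eq:opensubH}) to guarantee that inner convex competitors of almost full volume must contain $D'$. The only difference is cosmetic: you extract the uniform $\eps_K$ via a sequence/contradiction argument, whereas the paper asserts the corresponding $\delta$ and $\eps_K$ directly from the same two facts.
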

\begin{proof} Letting $K\in\K^N$ be fixed,  there exists $D'\in\K^N$ such that $D'\subset \text{Int}(K)$. Thanks to \eqref{eq:opensubH} we know that there exists $\delta>0$ such that if $\widetilde{K}\in\K^N$ verifies $d_H(K,\widetilde{K})\leq \delta$ then $\widetilde{K}\supset D'$. Thanks to \eqref{eq:equivHausVol} we can find $\eps_K>0$ such that if $|K\setminus\widetilde{K}|\leq\eps_K$, $d_H(K,\widetilde{K})\leq \delta$. Putting these together and applying hypothesis \eqref{eq:hypR4} with the class $\K_{D',K}^N$ gives that $R$ satisfies \eqref{eq:hypR3}.\end{proof}

Note that on the other hand condition \eqref{eq:hypR4} is genuinely stronger than \eqref{eq:hypR3}. In fact, \eqref{eq:hypR4} is double-sided while it is not the case for \eqref{eq:hypR3}, but there is a deeper difference which boils down to the fact that in \eqref{eq:hypR3} the constants $(\eps_K,C_K)$ depend on $K$, while in \eqref{eq:hypR4} the constant $C_{D',D}$ is locally uniform. In this sense, \eqref{eq:hypR3} somehow says that $R$ is "differentiable" everywhere while \eqref{eq:hypR4} means that $R$ is locally Lipschitz; one can build an example of $R$ verifying a double-sided \eqref{eq:hypR3} and not \eqref{eq:hypR4} by setting $R(K):=f(|K|)$ for some $f:\R^+\rightarrow\R$ differentiable everywhere but not locally Lipschitz.


 \subsubsection{Main result}
The main result of this section is the following.
\begin{theorem}\label{th:shape3}
Let $K^{*}$ be a solution of problem \eqref{eq:pbshapeconstvol}, with $R$ satisfying \eqref{eq:hypR4} and $0<V_0<|D|$. 
Then $K^*$ is a quasi-minimizer of the perimeter in the sense of Definition \ref{def:qmpc}, and is therefore $C^{1,1}$. \end{theorem}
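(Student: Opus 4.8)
The plan is to show that any minimizer $K^*$ of the volume-constrained problem \eqref{eq:pbshapeconstvol} is in fact a quasi-minimizer of the perimeter under convexity constraint in the sense of Definition \ref{def:qmpc}; the $C^{1,1}$-regularity then follows immediately from Theorem \ref{th:shape1}. The core difficulty is that the natural inner competitors $\widetilde K\subset K^*$ produced by the cutting procedure (as in the proof of Theorem \ref{th:shape1}) violate the volume constraint $|\widetilde K|=V_0$, so they cannot be used directly against $K^*$. The idea, parallel to the classical penalization technique in the perimeter setting (cf. \cite{T88Var,DLPV}), is to \emph{restore the volume} by dilating $\widetilde K$ slightly, or more precisely by taking a Minkowski sum $\widetilde K \oplus \rho B$ (or $(1+t)\widetilde K$ after a suitable translation) with a small parameter chosen so that the resulting convex body has volume exactly $V_0$ again; this is the content of the penalization lemma \ref{lem:shapepenal} quoted in the introduction. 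One then compares $K^*$ with this volume-corrected competitor.

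\textbf{Key steps.} First I would fix $D'\in\K^N$ with $D'\subset\mathrm{Int}(K^*)$ and work in the class $\K^N_{D',D}$, so that hypothesis \eqref{eq:hypR4} gives a \emph{uniform} Lipschitz constant $C_{D',D}$ for $R$ along nested inclusions; this uniformity (not available from \eqref{eq:hypR3} alone) is what makes the penalization argument quantitative. Second, given any $\widetilde K\in\K^N$ with $\widetilde K\subset K^*$ and $|K^*\setminus\widetilde K|\le\eps$ small, apply Lemma \ref{lem:shapepenal} to produce $\widetilde K'\supset\widetilde K$, still inside $D$ (here one needs $V_0<|D|$ and $\eps$ small, using that $K^*\subset D$ with a definite margin, or else a further small translation to keep $\widetilde K'\subset D$), with $|\widetilde K'|=V_0=|K^*|$, together with the quantitative control
\[
P(\widetilde K')\le P(\widetilde K)+C_1\,|\widetilde K'\setminus\widetilde K|
\quad\text{and}\quad
|\widetilde K'\setminus\widetilde K|\le C_2\,|K^*\setminus\widetilde K|,
\]
where $C_1,C_2$ depend only on $D',D$ and $K^*$; the second inequality holds because to regain a volume deficit of order $|K^*\setminus\widetilde K|$ by a Minkowski dilation of a body sandwiched between $D'$ and $D$ one only needs a dilation parameter of that same order, hence the perimeter and volume increments are comparable to it. Third, since $\widetilde K'$ is admissible for \eqref{eq:pbshapeconstvol} (convex, $\subset D$, volume $V_0$), minimality of $K^*$ gives $P(K^*)+R(K^*)\le P(\widetilde K')+R(\widetilde K')$, i.e.
\[
P(K^*)\le P(\widetilde K')+\big(R(\widetilde K')-R(K^*)\big).
\]
Fourth, estimate the right-hand side: bound $P(\widetilde K')\le P(\widetilde K)+C_1 C_2|K^*\setminus\widetilde K|$ and, writing $R(\widetilde K')-R(K^*)=\big(R(\widetilde K')-R(\widetilde K)\big)+\big(R(\widetilde K)-R(K^*)\big)$, use \eqref{eq:hypR4} on each term: $|R(\widetilde K')-R(\widetilde K)|\le C_{D',D}|\widetilde K'\setminus\widetilde K|\le C_{D',D}C_2|K^*\setminus\widetilde K|$ (valid once $\widetilde K\supset D'$, which holds for $\eps$ small by \eqref{eq:opensubH}) and $R(\widetilde K)-R(K^*)\le C_{D',D}|K^*\setminus\widetilde K|$. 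Also $P(\widetilde K)\le P(K^*)-\big(P(K^*)-P(\widetilde K)\big)$ — but we actually want the reverse: we only need $P(\widetilde K)$ on the right. Collecting,
\[
P(K^*)\le P(\widetilde K)+\Big(C_1C_2+C_{D',D}C_2+C_{D',D}\Big)\,|K^*\setminus\widetilde K|,
\]
which is precisely \eqref{eq:qmpc} with $\Lambda_{K^*}:=C_1C_2+C_{D',D}C_2+C_{D',D}$ and $\eps_{K^*}$ the smaller of the thresholds appearing above. Hence $K^*$ is a quasi-minimizer of the perimeter under convexity constraint, and Theorem \ref{th:shape1} yields $K^*\in C^{1,1}$.

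\textbf{Main obstacle.} The delicate point is the penalization Lemma \ref{lem:shapepenal} itself and, relatedly, keeping the corrected competitor inside $D$. Dilating $\widetilde K$ to regain volume could push it out of $D$; the remedy is to dilate about a point of $D'\subset\mathrm{Int}(K^*)$ and use that $K^*$ sits at positive distance from $\partial D$ on the portion that was cut (or, if the cut reaches $\partial D$, to combine a Minkowski sum in the complementary directions with the convexity of $D$), making sure all constants stay controlled by $D',D$ and the fixed body $K^*$ only — this is what ultimately allows $\Lambda_{K^*}$ and $\eps_{K^*}$ to be genuine constants depending only on $K^*$. Verifying that the volume-to-perimeter trade-off in this dilation is linear (rather than worse) in the volume deficit, uniformly over the relevant competitors, is the technical heart of the argument; everything else is a bookkeeping of Lipschitz estimates.
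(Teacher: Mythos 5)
Your proposal is correct and follows essentially the same route as the paper: invoke the penalization Lemma \ref{lem:shapepenal} to replace the volume constraint by a term $\Lambda\big||K|-V_0\big|$ (equivalently, to produce a volume-restored competitor with linearly controlled perimeter and volume increments), then absorb the $R$-variations via the uniform Lipschitz property \eqref{eq:hypR4} to obtain \eqref{eq:qmpc} and conclude by Theorem \ref{th:shape1}. The obstacle you flag about keeping the corrected competitor inside $D$ is resolved in the paper's construction by taking $K_t=(1-t)\widetilde K+tD$, which lies in $D$ automatically by convexity of $D$, with the linear volume-to-perimeter trade-off coming from mixed-volume estimates.
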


The proof of Theorem \ref{th:shape3} relies on the following important lemma, which allows the use of the results of section \ref{sect:shape1} over an auxiliary problem for which $K^*$ is still optimal.
For any $K\in \K^N$ and $\eps>0$ we set the class $\mathcal{O}_{\eps}(K)$ of convex bodies which are $\eps-$close perturbations of $K$ from the inside: \begin{equation} \nonumber \mathcal{O}_{\eps}(K):=\{\widetilde{K}\in\K^N, \ \widetilde{K}\subset K,\ |K\setminus\widetilde{K}|\leq \eps\}.\end{equation} 
\begin{lemma}\label{lem:shapepenal}
Let $K^{*}$ be a solution of problem \eqref{eq:pbshapeconstvol}, with $R$ satisfying \eqref{eq:hypR4} and $0<V_0<|D|$. Then there exists $\Lambda>0$ and $\eps>0$ such that $K^{*}$ is a solution of 
\begin{equation} \label{eq:pbshapeconstpenal}\min\left\{P(K)+R(K)+\Lambda\big||K|-V_0\big|,\ K\in \mathcal{O}_{\eps}(K^*)\right\}\end{equation}
\end{lemma}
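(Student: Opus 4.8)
The plan is to show that penalizing the volume constraint with a large enough multiplier $\Lambda$ makes $K^*$ still optimal among small inward perturbations. The strategy is the classical one: suppose $K^*$ is not optimal for \eqref{eq:pbshapeconstpenal} for any choice of $\Lambda$, produce a minimizing sequence $K_n\in\mathcal{O}_{\eps_n}(K^*)$ (with $\Lambda=\Lambda_n\to+\infty$) that beats $K^*$, and then correct the volume of each $K_n$ to produce an admissible competitor for the original problem \eqref{eq:pbshapeconstvol}, reaching a contradiction with the optimality of $K^*$. The key tool for the volume correction is Minkowski summation: since $K_n\subset K^*$ and $|K_n|\le|K^*|=V_0$, one dilates $K_n$ slightly (or, to keep the competitor inside $D$, Minkowski-adds a small ball or more carefully a scaled copy of a fixed body sitting in $\mathrm{Int}(D)$) to restore the volume $V_0$ while controlling the perimeter increase by a constant times the volume defect $V_0-|K_n|$.

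First I would fix, once and for all, a small convex body $B'\subset\mathrm{Int}(D)$ and a radius so that $D'\subset\mathrm{Int}(K^*)$ for some fixed $D'\in\K^N$ (possible since $K^*$ has nonempty interior); I also fix $\eps>0$ small enough that every $\widetilde{K}\in\mathcal{O}_\eps(K^*)$ still contains $D'$ and stays well inside $D$, using \eqref{eq:equivHausVol} and \eqref{eq:opensubH}. Then, arguing by contradiction, for each $n$ set $\Lambda=n$ and $\eps_n=\eps/n$ (or any sequence tending to $0$), and suppose there is $K_n\in\mathcal{O}_{\eps_n}(K^*)$ with
\[P(K_n)+R(K_n)+n\big||K_n|-V_0\big|<P(K^*)+R(K^*).\]
Since $K_n\subset K^*$, we have $|K_n|\le V_0$, so $\big||K_n|-V_0\big|=V_0-|K_n|=|K^*\setminus K_n|=:\delta_n$, and the inequality forces $\delta_n\to0$ (the left side is at least $n\delta_n$ minus a bounded quantity, using that $P(K_n)\le P(K^*)$ and $R$ is bounded below on $\mathcal{O}_\eps(K^*)$ by \eqref{eq:hypR4}). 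By \eqref{eq:equivHausVol} this gives $d_H(K_n,K^*)\to0$.

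Next I would build the volume-corrected competitor. Using the Minkowski sum, define $\widehat{K}_n:=(1-t_n)K_n+t_nB'$ (a convex body contained in $D$ by convexity of $D$, since $K_n\subset D$ and $B'\subset D$) and choose $t_n\ge0$ so that $|\widehat{K}_n|=V_0$; since $t\mapsto|(1-t)K_n+tB'|$ is continuous, starts at $|K_n|=V_0-\delta_n\le V_0$, and can be made $\ge V_0$ for a fixed small $t$ uniformly in $n$ (because $K_n\to K^*$ and $B'$ has a definite "extra" volume direction), such $t_n$ exists and satisfies $t_n\le C\delta_n$ for a constant $C$ independent of $n$. The Brunn--Minkowski / Minkowski-mixed-volume inequalities (or the simple fact that perimeter is Minkowski-subadditive up to the linear interpolation) give $P(\widehat{K}_n)\le (1-t_n)P(K_n)+t_nP(B')\le P(K_n)+Ct_n\le P(K_n)+C\delta_n$, and similarly $|\widehat{K}_n\Delta K_n|\le C\delta_n$ so that by \eqref{eq:hypR4} (applied in $\K^N_{D'',D}$ for a suitable fixed $D''$, valid for large $n$) one has $R(\widehat{K}_n)\le R(K_n)+C\delta_n$. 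Hence
\[P(\widehat{K}_n)+R(\widehat{K}_n)\le P(K_n)+R(K_n)+C\delta_n<P(K^*)+R(K^*)-n\delta_n+C\delta_n,\]
which for $n>C$ is strictly less than $P(K^*)+R(K^*)$, contradicting optimality of $K^*$ for \eqref{eq:pbshapeconstvol} since $\widehat{K}_n\in\K^N$, $\widehat{K}_n\subset D$, $|\widehat{K}_n|=V_0$.

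The main obstacle I anticipate is making the volume-correction step quantitatively clean: one must verify that the required dilation parameter $t_n$ is $O(\delta_n)$ with a constant uniform in $n$ (this uses that $K_n\to K^*$ in Hausdorff distance, hence $|(1-t)K_n+tB'|$ has a derivative in $t$ at $0$ bounded below away from $0$ uniformly — a mixed-volume computation), and that $\widehat{K}_n\subset D$ genuinely holds (immediate from convexity of $D$ once $B'\subset D$), while also checking $\widehat{K}_n\in\mathcal{O}_\eps(K^*)$ is \emph{not} needed — $\widehat{K}_n$ is a competitor for the original constrained problem, not the penalized one. A secondary technical point is that $\widehat{K}_n$ need not be contained in $K^*$, so the one-sided estimate \eqref{eq:hypR4} must be applied to the nested pair $(K_n,\widehat{K}_n\cup K_n)$ or, more simply, one invokes the double-sided local Lipschitz bound of \eqref{eq:hypR4} twice through an intermediate body; keeping the class $\K^N_{D',D}$ and the constant $C_{D',D}$ fixed along the sequence (legitimate for $n$ large since all bodies involved contain $D'$ and lie in $D$) is what makes the penalization argument work.
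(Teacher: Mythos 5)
Your overall scheme (penalize, take $\Lambda=n\to\infty$, restore the volume of the competitor by a Minkowski combination whose parameter is $O(\delta_n)$, contradict optimality of $K^*$) is the same as the paper's, which runs the argument directly rather than by contradiction. However, the volume-restoration step, which is the heart of the lemma, does not work as you set it up. If $B'$ is a \emph{small} convex body in $\mathrm{Int}(D)$, the map $t\mapsto\big|(1-t)K_n+tB'\big|$ is typically \emph{decreasing} near $t=0$: its derivative at $t=0$ equals $N\big(V(K_n[N-1],B')-|K_n|\big)$, and for $B'$ a small ball of radius $\rho$ this is $\rho P(K_n)-N|K_n|<0$. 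Intuitively, the factor $(1-t)$ shrinks $K_n$ faster than adding $tB'$ fattens it, so no $t_n\ge 0$ with $|\widehat{K}_n|=V_0$ exists and the claim that the volume "can be made $\ge V_0$ for a fixed small $t$ uniformly in $n$" is false. The paper's fix is to take the second body to be $D$ itself, i.e. $K_t:=(1-t)K+tD\subset D$; then the derivative $N\big(V(K[N-1],D)-|K|\big)$ is strictly positive whenever $K\subsetneq D$ (by the equality case in the monotonicity of mixed volumes, \cite[Theorem 7.6.17]{Sc}), it is bounded below uniformly on a Hausdorff neighborhood of $K^*$ by continuity of mixed volumes, and the hypothesis $V_0<|D|$ guarantees $K^*\subsetneq D$. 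As a bonus, $K_t\supset(1-t)K+tK=K$, so the nested, one-sided form of \eqref{eq:hypR4} applies directly and the detour through $K_n\cup\widehat{K}_n$ you mention becomes unnecessary.

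A secondary flaw: the inequality $P\big((1-t)K_n+tB'\big)\le(1-t)P(K_n)+tP(B')$ is not a consequence of Brunn--Minkowski (which gives concavity of $t\mapsto P(K_t)^{1/(N-1)}$, i.e. a lower bound), and perimeter is not Minkowski-subadditive for $N\ge3$. What you actually need, and what the paper proves, is the weaker bound $P(K_t)\le P(K)+Ct$ with $C$ uniform over $K\subset D$; this follows because $t\mapsto P(K_t)=NV(K_t[N-1],B)$ is a polynomial whose coefficients are mixed volumes, uniformly bounded by compactness of $\{L\ \text{convex},\ L\subset D\}$ and continuity of $V$ for $d_H$. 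With these two corrections (replace $B'$ by $D$, and justify the perimeter estimate via the uniform boundedness of the mixed-volume coefficients), your argument closes.
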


 We will use in the proof of this lemma the following classical result concerning Minkowski sums and mixed volume (see for instance \cite[Theorem 5.1.7]{Sc}). \begin{theorem}[Mixed volume]\label{th:mixed} For any $m\in \N^*$ and $K_1,...,K_m\in \K^N$, the map $(t_1,...,t_m)\in (\R^+)^m\mapsto |t_1K_1+...+t_mK_m|$ is a homogeneous polynomial of degree $N$, \textit{i.e.} there exists a symmetric function $V:(\K^N)^N\rightarrow\R$ (called mixed volume) such that for any $t_1,t_2,...,t_m\geq0$ \begin{equation}\nonumber |t_1K_1+...+t_mK_m|=\sum_{i_1,...,i_N=1}^mt_{i_1}...t_{i_N}V(K_{i_1},...K_{i_N})\end{equation}Furthermore $V$ is nondecreasing in each coordinate for the inclusion of sets, and continuous for the Hausdorff distance. \end{theorem}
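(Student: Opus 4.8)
The plan is to follow the classical two-step scheme for Minkowski's theorem on mixed volumes: first prove polynomiality when all the $K_i$ are polytopes, by induction on the dimension, and then extend to arbitrary convex bodies by approximation, reading off symmetry, continuity, nonnegativity and monotonicity along the way. For the polytope case, fix polytopes $P_1,\dots,P_m$ in $\R^N$, set $P_t:=\sum_{i=1}^m t_iP_i$ for $t=(t_1,\dots,t_m)$, and let $U$ be the (finite) set of rays of the common refinement $\Sigma$ of the normal fans of $P_1,\dots,P_m$. For $t$ in the open positive orthant the normal fan of $P_t$ is precisely $\Sigma$, so the facet normals of $P_t$ all lie in the fixed set $U$, independently of $t$. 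I would then prove by induction on $n$ the statement: \emph{for polytopes $Q_1,\dots,Q_m$ contained in an $n$-dimensional affine subspace, $t\mapsto\vol_n(\sum_i t_iQ_i)$ is a homogeneous polynomial of degree $n$ with nonnegative coefficients} ($\vol_n$ being Lebesgue measure in that subspace, so that degenerate combinations contribute the zero polynomial). The case $n=1$ is immediate, a Minkowski combination of segments being a segment of length $\sum_i t_i\vol_1(Q_i)$. For the step one may assume the $P_i$ are $N$-dimensional (the general case following by continuity of volume and of Minkowski addition), so that $P_t$ is $N$-dimensional on the open orthant, where the divergence theorem applied to $x\mapsto x$ gives
\[\vol_N(P_t)=\frac1N\sum_{u\in U}h_{P_t}(u)\,\vol_{N-1}\big((P_t)^{u}\big),\]
$P^{u}$ denoting the face of $P$ with outer normal $u$. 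Now $h_{P_t}=\sum_i t_ih_{P_i}$ is linear in $t$ with nonnegative coefficients once $0\in P_i$, while $(P_t)^{u}=\sum_i t_iP_i^{u}$ is a Minkowski combination of polytopes lying in (a translate of) the hyperplane $u^{\perp}$; by the induction hypothesis in dimension $N-1$, $\vol_{N-1}((P_t)^{u})$ is a homogeneous polynomial of degree $N-1$ in $t$ with nonnegative coefficients. The displayed identity then exhibits $\vol_N(P_t)$ as a homogeneous polynomial of degree $N$ with nonnegative coefficients, which closes the induction.

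To introduce $V$, take $m=N$ and polytopes $P_1,\dots,P_N$ and define $V(P_1,\dots,P_N)$ to be $1/N!$ times the coefficient of $t_1t_2\cdots t_N$ in $\vol_N(\sum_{i=1}^N t_iP_i)$. Since this volume is symmetric under permutations of the pairs $(t_i,P_i)$, the coefficient of the symmetric monomial $t_1\cdots t_N$ is a symmetric function of $(P_1,\dots,P_N)$, and multilinearity of the volume polynomial yields the asserted expansion $\vol_N(\sum_{i=1}^m t_iP_i)=\sum_{i_1,\dots,i_N=1}^m t_{i_1}\cdots t_{i_N}\,V(P_{i_1},\dots,P_{i_N})$ for every $m$. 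A finite-difference computation also gives the closed form $N!\,V(P_1,\dots,P_N)=\sum_{\emptyset\ne S\subseteq\{1,\dots,N\}}(-1)^{N-|S|}\vol_N(\sum_{i\in S}P_i)$. To pass to general $K_1,\dots,K_m\in\K^N$, approximate each $K_j$ in Hausdorff distance by polytopes; since Minkowski addition and $\vol_N$ are $d_H$-continuous, $\vol_N(\sum_i t_iK_i)$ is a pointwise limit (in $t$) of homogeneous degree-$N$ polynomials, hence, by evaluating at finitely many generic points of the orthant, itself a homogeneous degree-$N$ polynomial whose coefficients are the corresponding limits; this delivers the expansion and the inclusion–exclusion formula for convex bodies, thereby defining $V$ on $(\K^N)^N$.

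Continuity of $V$ for $d_H$ is then immediate from the inclusion–exclusion formula, each map $(K_1,\dots,K_N)\mapsto\vol_N(\sum_{i\in S}K_i)$ being $d_H$-continuous. Nonnegativity of $V$ for polytopes passes to $d_H$-limits, so $V\ge0$ on $(\K^N)^N$. For monotonicity, reading off the coefficient of $t_1\cdots t_N$ in the inductive identity above gives, for polytopes, a representation $V(P_1,\dots,P_N)=\frac1N\int h_{P_1}\,d\mu$, where $\mu$ is a nonnegative measure supported on a finite set of unit normals depending only on $P_2,\dots,P_N$ (its weights being the $(N-1)$-dimensional mixed volumes of the faces $P_2^{u},\dots,P_N^{u}$, nonnegative by the previous point). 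Since $P_1\subseteq L_1$ forces $h_{P_1}\le h_{L_1}$ pointwise, we get $V(P_1,P_2,\dots,P_N)\le V(L_1,P_2,\dots,P_N)$; by symmetry $V$ is monotone in each argument, hence in all simultaneously, on polytopes. For general $K_i\subseteq K_i'$, approximate each $K_i$ from inside and each $K_i'$ from outside by polytopes $P_i^{(k)}\subseteq Q_i^{(k)}$ with $P_i^{(k)}\to K_i$ and $Q_i^{(k)}\to K_i'$, and pass to the limit using the continuity just proved.

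The one genuinely delicate point is the combinatorial bookkeeping in the polytope step: one must check that, as $t$ runs over the positive orthant, the facet normals of $\sum_i t_iP_i$ stay confined to the single finite set $U$ of rays of the common refinement of the normal fans — a priori a Minkowski sum can acquire facet normals that belong to no summand — so that the divergence-theorem identity is a true polynomial identity rather than merely piecewise polynomial; and then the coefficient extraction producing the nonnegative surface-area-type measure used for monotonicity. The remaining ingredients — inner and outer polytope approximation, $d_H$-continuity of volume and of Minkowski sums, and stability of polynomials of bounded degree under pointwise limits — are routine.
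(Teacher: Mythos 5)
First, note that the paper does not prove this statement at all: it is quoted as a classical result with a pointer to Schneider's book (Theorem 5.1.7), so your proposal is to be measured against the standard textbook argument rather than against anything in the paper. Your outline does follow that classical route (polytopes first, induction on the dimension via the facet decomposition of the volume, then approximation in Hausdorff distance), and the polynomiality, the definition of $V$ by coefficient extraction, its symmetry, the $d_H$-continuity via the inclusion--exclusion formula, and the nonnegativity are all fine at this level of detail.

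There is, however, a genuine gap at the monotonicity step. You claim that ``reading off the coefficient of $t_1\cdots t_N$ in the inductive identity'' yields the representation $V(P_1,\dots,P_N)=\frac1N\sum_u h_{P_1}(u)\,v\bigl(P_2^u,\dots,P_N^u\bigr)$ with weights depending only on $P_2,\dots,P_N$. It does not. Extracting that coefficient from
\begin{equation}\nonumber
\mathrm{Vol}_N(P_t)=\frac1N\sum_{u\in U}h_{P_t}(u)\,\mathrm{Vol}_{N-1}\bigl((P_t)^u\bigr),\qquad h_{P_t}=\sum_j t_j h_{P_j},\quad (P_t)^u=\sum_i t_iP_i^u,
\end{equation}
gives only the symmetrized identity
\begin{equation}\nonumber
V(P_1,\dots,P_N)=\frac{1}{N^2}\sum_{j=1}^N\sum_{u\in U} h_{P_j}(u)\,v\bigl(\{P_k^u\}_{k\ne j}\bigr),
\end{equation}
in which $P_1$ also enters through its faces $P_1^u$ in the terms $j\ge2$, and through the normal set $U$. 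Since $P_1\subseteq L_1$ gives no inclusion between the faces $P_1^u$ and $L_1^u$ (e.g.\ replacing a square by a containing triangle can shrink a facet to a vertex), monotonicity cannot be read off from this formula. The clean one-support-function representation you invoke is exactly the assertion that the recursively defined quantity $\frac1N\sum_u h_{P_1}(u)\,v(P_2^u,\dots,P_N^u)$ is symmetric in all $N$ arguments, i.e.\ coincides with your $V$; proving that symmetry is the genuinely hard core of Minkowski's theorem (Schneider handles it via simple, strongly isomorphic polytopes and a simultaneous induction), and your sketch, although it flags the coefficient extraction as delicate, does not supply it. Until that identification is established, the nonnegative ``surface-area-type'' measure and hence the monotonicity of $V$ remain unproven; the rest of your argument stands.
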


\noindent{\bf Proof of Lemma \ref{lem:shapepenal}: }Set $G(K):=P(K)+R(K)$ for any $K\in \K^N$. We use a classical strategy (see for example \cite[Lemma 4.5]{DLPV}, though our construction will be adapted to the convexity constraint): for any $K\subset K^*$ with $|K^*\setminus K|\leq \eps$ (for sufficiently small $\eps$) we build a convex body $\widetilde{K}\subset D$  such that $|\widetilde{K}|=V_0$ and $G(\widetilde{K})\leq G(K)+\Lambda\big||K|-V_0\big|$ (for sufficiently large $\Lambda$). Writing then \begin{equation}\nonumber G(K^*)\leq G(\widetilde{K})\leq G(K)+\Lambda\big||K|-V_0\big|\end{equation} yields the conclusion.

For $K\subset D$ a convex body and $t\in [0,1]$ we set the Minkowski sum \begin{equation}\nonumber K_t:=(1-t)K+tD\end{equation}and note that  $K_t$ is a convex body and $K_t\subset D$. We first claim that there exist $\eps_{0}>0, c>0, t_0>0$ such that
\begin{equation}\label{eq:below_vol_derivative}\forall K\in \mathcal{O}_{\eps_{0}}(K^*), \ \forall t\in [0,t_0],\ |K_t|-|K|\geq ct\end{equation} 
Let $f_K(t):=|K_t|$. By Theorem \ref{th:mixed}, $f_K$ is polynomial with degree $N$ and more precisely
$$f_{K}(t)=\sum_{k=0}^N\binom{N}{k}(1-t)^kt^{N-k}V(K[k],D[N-k])$$
where $K[p]$ stands for $(K,K,\cdots,K)$ with $p$ repetitions.

Now, as the class $\{L\subset \R^N \text{ compact convex},\ L\subset D\}$ is compact for $d_H$ and since $V$ is continuous for $d_H$, we deduce that the coefficients of $f_K$ are uniformly bounded for $K\in \K^N, K\subset D$. To conclude that the claim holds it therefore suffices to show that $f_K'(0)$ is bounded from below by a positive constant uniform in $K\in \mathcal{O}_{\eps}(K^*)$ for some small $\eps$.

One has 
\begin{align}\nonumber f_K'(0)& =N\big(V(K[N-1],D)-V(K[N])\big)\\ &=N\big(V(K,...,K,D)-|K|\big)\nonumber
\end{align}
which is nonnegative by monotonicity of the mixed volume. Moreover, as soon as we have $K\subsetneq D$, we can apply \cite[Theorem 7.6.17]{Sc} to get ${V(K[N-1],D)-|K|>0}$; equality would in fact imply that $D$ is a $0$-tangential body of $K$, hence that $K=D$. This gives in particular $f_{K^*}'(0)>0$ (since $V_{0}<|D|$).  Since $K\mapsto f_K'(0)$ is continuous for $d_H$, we therefore have $f_K'(0)\geq f_{K^*}'(0)/2$ for any convex body $K\subset D$ with $d_H(K,K^*)$ small enough. Thanks to \eqref{eq:equivHausVol}, we deduce the existence of $\eps_{0}>0$ such that $f_K'(0)\geq f_{K^*}'(0)/2>0$ for any $K\in \mathcal{O}_{\eps_{0}}(K^*)$. This yields \eqref{eq:below_vol_derivative} for $c:=f_{K^*}'(0)/4$ and for some small $t_0=t_0(D,K^*)$.

We now show that a reverse inequality holds for the perimeter: there exists $C=C(D)>0$ such that \begin{equation}\label{eq:above_per_derivative}\ \forall t\in [0,1], \ \forall K\in \K^N \textrm{ such that }K\subset D, \ P(K_t)-P(K)\leq Ct\end{equation} 
 Since for any $L\in \K^N$, \begin{equation}\label{eq:mixedper} P(L)=NV(L[N-1],B)\end{equation} where $B$ is the ball of unit radius (see for instance \cite[p.294, (5.43) to (5.45)]{Sc}) the mapping $t\mapsto P(K_t)=NV(K_t[N-1],B)$ is a polynomial function whose coefficients are continuous quantities of  ${(V(K[i],D[N-1-i],B))_{0\leq i\leq N-1}}$. Hence the continuity of $V$ for $d_H$ and the compactness of the class $\{L\subset \R^N \text{ compact convex},\ L\subset D\}$ for $d_H$ give \eqref{eq:above_per_derivative}.
 
 Putting together \eqref{eq:above_per_derivative} with \eqref{eq:below_vol_derivative} and setting $C':=C/c$ provides 
 \begin{equation} \label{eq:ab_per_vol}
 \forall K\in \mathcal{O}_{\eps_{0}}(K^*), \ \forall t\in [0,t_0],\ P(K_t)-P(K)\leq C' \big||K_t|-|K|\big|\end{equation}
On the other hand,  there exists $D'\in \K^N$ such that $D'\subset \text{Int}(K)$.  Then arguing as in the proof of Proposition \ref{th:stronghypR} ensures that for $\eps_{1}$ small enough, any $K\in\mathcal{O}_{\eps_{1}}(K^*)$ verifies $K\supset D'$. Therefore by \eqref{eq:hypR4} there exists $C_{D',D}>0$ such that
 \begin{equation}\label{eq:above_R}\forall t\in [0,1], \  \forall K\in \mathcal{O}_{\eps_{1}}(K^*), \ R(K_t)-R(K)\leq C_{D',D}|K_t\setminus K| = C_{D',D}\big||K_t|-|K|\big|. \end{equation}
Let $\eps=\min\{\eps_{0},\eps_{1},ct_{0}\}$ and $K\in\O_{\eps}(K^*)$.

We deduce from \eqref{eq:below_vol_derivative} that
  \begin{equation}\nonumber |K_{t_0}|-|K^*|{=|K_{t_0}|-|K|+|K|-|K^*|\geq ct_0-\eps}\geq 0\end{equation}
   By continuity of $t\mapsto|K_{t}|$, this ensures that 
   
   there exists $t\in [0,t_0]$ such that $|K_t|=|K^*|$. With \eqref{eq:ab_per_vol} and \eqref{eq:above_R} we get that the set $\widetilde{K}:=K_t$ satisfies all the requirements laid out at the beginning of the proof with $\Lambda:=C'+C_{D',D}$.
\qed

\noindent{\bf Proof of Theorem \ref{th:shape3}:} Thanks to Lemma \ref{lem:shapepenal}, an optimal shape $K^*$ for \eqref{eq:pbshapeconstvol} is solution of \eqref{eq:pbshapeconstpenal} for some $\eps>0$ and $\Lambda>0$. 
Therefore we have
$$\forall K\in\O_{\eps}(K^*), \;\;P(K^*)\leq P(K)+R(K)-R(K^*)+\Lambda|K^*\setminus K|,$$
 By Proposition \ref{th:stronghypR}, $R$ verifies hypothesis \eqref{eq:hypR3}, and as a consequence there exists $C_{K^*}>0$ such that  \[\forall K\in\O_{\eps}(K^*), \;\;P(K^*)\leq P(K)+(C_{K^*}+\Lambda)|K^*\setminus K|\] Hence $K^*$ is a quasi-minimizer of the perimeter under convexity constraint in the sense of Definition \ref{def:qmpc}. We can therefore apply Theorem \ref{th:shape1} to get that $K^*$ is $C^{1,1}$. \qed

\begin{remark}\label{th:rkHold2}As in Remark \ref{th:rkHolder1}, there is an analogous result to Theorem \ref{th:shape3} if $R$ is merely $\gamma$-H\"{o}lder for some $\gamma \in(1-1/N,1]$, meaning that  \eqref{eq:hypR4} is replaced with \begin{equation}\label{eq:hypRHold2} \forall D'\subset D\in \K^N, \ \exists C_{D',D}>0, \ \forall \big( K_1,K_2\in \K^N_{D',D}, \ K_1\subset K_2\big), \ \ |R(K_2)-R(K_1)|\leq C_{D',D}|K_2\setminus K_1|^{\gamma}\end{equation}In this case, keeping the same notations as in the proof of Lemma \ref{lem:shapepenal}, the same arguments show \begin{equation}\nonumber G(\widetilde{K})\leq G(K)+\Lambda\big||K|-V_0\big|^{\gamma},\end{equation} 
and with the additionnal remark that 
$$P(K_t)-P(K)\leq \Lambda \big||K_t|-|K|\big|\leq \Lambda'\big||K_t|-|K|\big|^{\gamma}$$
 with $\Lambda':=\Lambda\times |D|^{1-\gamma},$
we conclude in this case that the optimal shape is $C^{1,\alpha}$ for the same $\alpha$ as in Remark \ref{th:rkHolder1}.
 \end{remark}

\section{Examples and applications}\label{sect:examples}
This section is dedicated to applications of the results of Section \ref{sect:shape3}. We therefore provide examples of functionals $R$ satisfying hypothesis \eqref{eq:hypR4} (and therefore \eqref{eq:hypR3} as well), Theorem \ref{th:shape3} then implying that the minimizers of the corresponding problem are $C^{1,1}$-regular.

\subsection{First examples}

Let us start by giving  two examples taken from the literature of minimization of $P+R$ for which proving that the functional $R$ satisfies hypotheses \eqref{eq:hypR3} and \eqref{eq:hypR4} is quite easy. 

A first example of relevant $R$ is given through the following model of a liquid drop subject to the action of a potential:  it consists in minimizing the energy \[P(E)+\int_Eg\] among bounded subsets $E$ of $\R^N$  with given volume, where $g:\R^N\rightarrow\R$ is a fixed function in  $L^1_{\text{loc}}(\R^N)$, see for example \cite{FM11Ont}.  An optimal shape may not be convex, and in this case it is interesting to study the counterpart of this problem with an additional convex constraint: under reasonable hypotheses on $g$ we can prove existence and regularity for a minimizer of this problem in the class of convex shapes (see Proposition \ref{th:1stex} below).

We can also consider the following generalization of the Gamow model \eqref{eq:gamow}, which consists in the minimization \[\inf\left\{P(E)+\int_E\int_E\frac{dxdy}{|x-y|^{N-\alpha}},\ E\subset \R^N,\ |E|=V_0\right\}\] for a given mass $V_0\in(0,+\infty)$ and parameter $\alpha\in(0,N)$. The interaction term $V_\alpha(E):=\int_E\int_E|x-y|^{\alpha-N}dxdy$, called {\it Riesz potential}, is maximized by any ball of volume $m$ by virtue of the Riesz inequality, so that there is a competition in the above minimization. As in the case of \eqref{eq:gamow} it is known that for small masses $m$ the ball of corresponding volume is the unique (up to translation) solution to the minimization problem (see the Introduction of \cite{FFMMM} for a review of these results), while it is proven in \cite[Theorem 2.5]{KM1} and \cite[Theorem 3.3]{KM2} for $\alpha\in(N-2,N)$ that beyond a certain threshold of mass $m$ there is no existence for this problem (and for $\alpha=N-2$ in \cite[Theorem 3]{FN}). On the contrary, the convexity constraint will enforce existence for all masses ; furthermore, applying Theorem \ref{th:shape3} we are able to show regularity for the problem when considered under a convexity constraint, see Proposition \ref{th:1stex} below.

We thus have the following proposition.
\begin{prop}\label{th:1stex}Let $N\geq2$ and $V_0\in(0,+\infty)$.\\ Let $g\in L^{\infty}_{\text{loc}}(\R^N)$ be coercive, that is to say $\lim_{|x|\to\infty}g(x)=+\infty$. Then there exists a solution to the problem \[\inf\left\{P(K)+\int_Kg,\ K\in\K^N,\ |K|=V_0\right\}\] and any such solution $K^*$is $C^{1,1}$. \\ Let $\alpha\in(0,N)$. Then there exists a solution to the problem \[\inf\left\{P(K)+V_{\alpha}(K),\ K\in \K^N, \ |K|=V_0\right\}\] and any such solution $K^*$ is $C^{1,1}$.\end{prop}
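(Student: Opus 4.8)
The plan is to verify, for each of the two functionals, first the existence of a minimizer via the direct method, and then the Lipschitz-type estimate \eqref{eq:hypR4}, so that Theorem \ref{th:shape3} applies and yields $C^{1,1}$-regularity. For existence: the class $\{K\in\K^N,\ |K|=V_0\}$ is not compact because of translations, but the coercivity of $g$ (respectively the behaviour of $V_\alpha$) will confine a minimizing sequence. Concretely, for the potential model, take a minimizing sequence $(K_n)$; since $P(K_n)$ is bounded, the diameters are controlled up to translation — but one must rule out the diameter blowing up, which follows because a long thin convex set of fixed volume has large perimeter (an isodiametric/isoperimetric-type bound: $P(K)\geq c_N\,\mathrm{diam}(K)^{?}$ for $|K|=V_0$ fixed, more simply $P(K)\ge 2\,\mathrm{diam}(K)$ in any dimension projecting onto a diameter). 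Once diameters are bounded, coercivity of $g$ forces the sequence to stay in a fixed ball (otherwise $\int_{K_n}g\to+\infty$ while the infimum is finite, using a fixed competitor); then Blaschke's selection theorem gives a convergent subsequence in Hausdorff distance, $|K_n|\to V_0$ by \eqref{eq:equivHausVol}, $P$ is lower semicontinuous, and $\int_{K_n}g\to\int_Kg$ by dominated convergence (using $g\in L^\infty_{\mathrm{loc}}$ and the uniform bounding ball). For the Riesz functional $V_\alpha$, the same scheme works: $V_\alpha$ is continuous along Hausdorff-convergent sequences contained in a fixed ball (the kernel $|x-y|^{\alpha-N}$ is locally integrable since $\alpha>0$), and diameter control again comes from boundedness of the perimeter; here no coercivity of a potential is needed, but one should note $V_\alpha\ge0$ so the infimum is finite and a minimizing sequence has bounded perimeter.

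Next I would check \eqref{eq:hypR4}. Fix $D'\subset D$ in $\K^N$ and $K_1\subset K_2$ in $\K^N_{D',D}$. For $R(K)=\int_K g$ this is immediate:
\[
|R(K_2)-R(K_1)| = \left|\int_{K_2\setminus K_1} g\right| \le \|g\|_{L^\infty(D)}\,|K_2\setminus K_1|,
\]
so \eqref{eq:hypR4} holds with $C_{D',D}=\|g\|_{L^\infty(D)}$ (here $D$ can be taken to be a large fixed ball containing the minimizer, or the estimate is applied with the ball produced by the existence argument). For $R=V_\alpha$, write
\[
V_\alpha(K_2)-V_\alpha(K_1) = \int_{K_2\setminus K_1}\int_{K_2}|x-y|^{\alpha-N}\,dy\,dx + \int_{K_1}\int_{K_2\setminus K_1}|x-y|^{\alpha-N}\,dy\,dx,
\]
and both terms are bounded by $|K_2\setminus K_1|$ times $\sup_{x\in D}\int_{D}|x-y|^{\alpha-N}\,dy$, which is finite because $\alpha-N>-N$ makes the kernel integrable over the bounded set $D$. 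Thus \eqref{eq:hypR4} holds with $C_{D',D}=2\sup_{x\in D}\int_D|x-y|^{\alpha-N}dy$, and notably neither bound requires $D'$, reflecting that these functionals are genuinely globally Lipschitz on bodies contained in $D$.

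Finally, to assemble the proof: the problems as stated have no a priori upper bound $D$, so one first runs the existence argument to locate a solution $K^*$ inside some large closed ball $\bar D$; equivalently one observes that adding the constraint $K\subset\bar D$ for $\bar D$ large enough does not change the infimum or the set of minimizers, since any minimizing sequence is eventually (after translation) contained in $\bar D$. With $D=\bar D$ a convex body and $0<V_0<|\bar D|$, the functional $R$ (either $\int_\cdot g$ or $V_\alpha$) satisfies \eqref{eq:hypR4} by the above, so Theorem \ref{th:shape3} applies to \eqref{eq:pbshapeconstvol} and gives that $K^*$ is a quasi-minimizer of the perimeter under convexity constraint, hence $C^{1,1}$ by Theorem \ref{th:shape1}. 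The main obstacle is the existence step, specifically the diameter bound that prevents a minimizing sequence of convex bodies of fixed volume from degenerating; the regularity-relevant estimate \eqref{eq:hypR4} is, by contrast, essentially a one-line computation in both cases.
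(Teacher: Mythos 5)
Your proposal follows essentially the same route as the paper's proof: direct method for existence, a short verification of \eqref{eq:hypR4} for each functional (your decomposition of $V_\alpha(K_2)-V_\alpha(K_1)$ and the bound by $\sup_{x\in D}\int_D|x-y|^{\alpha-N}dy$ match the paper's \eqref{eq:RV_alpha} up to the use of Hardy--Littlewood), and then Theorem \ref{th:shape3}; your remark that an unconstrained minimizer solves the box-constrained problem \eqref{eq:pbshapeconstvol} for a large ball $D$ is exactly the right way to invoke that theorem. Two small points in the existence step need repair. First, the ``simpler'' bound $P(K)\geq 2\,\mathrm{diam}(K)$ is false for $N\geq 3$: a needle $[0,L]\times[0,\eps]^{N-1}$ has perimeter $O(L\eps^{N-2})$, which is small for fixed $L$. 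The correct statement is the one you hedged on, namely $\mathrm{diam}(K)\leq C(N)\,P(K)^{N-1}/|K|^{N-2}$ for convex bodies (the paper's \eqref{eq:diamPV}, from \cite[Lemma 4.1]{EFT}), which with $|K|=V_0$ fixed does force the diameter to blow up when the perimeter does not. Second, for the potential model you assert that $P(K_n)$ is bounded without justification; since $\int_{K_n}g$ could a priori be very negative, you already need coercivity here: $g\geq 0$ outside some bounded set $A$, so $\int_{K_n}g\geq -\|g\|_{L^\infty(A)}|A|$ and hence $P(K_n)$ is bounded along a minimizing sequence. With these repairs the argument is complete. Incidentally, your confinement argument for the potential model (if $d(0,K_n)\to\infty$ along a subsequence then $\int_{K_n}g\geq V_0\inf_{K_n}g\to+\infty$, contradicting finiteness of the infimum) is a legitimate and slightly cleaner alternative to the paper's, which instead replaces $K_n$ by a translate of no larger energy; for $V_\alpha$ the confinement comes from translation invariance of the functional rather than from the functional itself, which your write-up uses only implicitly and which is worth stating.
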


 \begin{proof}
\noindent{\bf Existence:} \begin{enumerate} \item  Let $(K_n)$ be a minimizing sequence for the first problem. As $g$ is coercive, there exists a bounded set $A$ such that $g\geq 0$ outside $A$. Therefore, as there exists $C>0$  such that $P(K_n)+\int_{K_n}g\leq C$ by definition of $(K_n)$, we can write \begin{equation*} P(K_n)\leq C-\int_{K_n}g\leq C -\int_{K_n\cap A}g\leq C+\|g\|_{L^1(A)}\end{equation*}so the perimeters $P(K_n)$ are uniformly bounded. We thus use the inequality \begin{equation}\label{eq:diamPV} \text{diam}(K)\leq C(N)\frac{P(K)^{N-1}}{|K|^{N-2}} \end{equation} valid for any convex body $K$ (see \cite[Lemma 4.1]{EFT}) to get that the $\text{diam}(K_n)$ are also uniformly bounded, recalling also that $|K_n|=V_0$. As a consequence, using the coercivity of $g$ we now show that there is no loss of generality in assuming that the $K_n$ are uniformly bounded: let $r>0$ be such that $B_r$ the ball centered at $0$ of radius $r$ verifies $\text{diam}(K_n)\leq \text{diam}(B_r)$ for every $n$, and set $m:=\text{ess}\sup_{B_r}g$. Thanks to the fact that $g$ is coercive we can find $r'>r$ such that $g\geq m$ outside $B_{r'}$. For any fixed $n\in\N$, either $K_n\subset B_{2r'}$ and we set $\widetilde{K_n}:=K_n$, or else there exists $x\in K_n\cap (B_{2r'})^c$ so that $K_n\subset (B_{r'})^c$ thanks to the bound on the diameters. In this latter case, we thus have $g\geq m $ over $K_n$ while $K_n-x\subset B_{r'}$ gives that $g\leq m $ over $K_n-x$ , so that \[P(K_n-x)+\int_{K_n-x}g\leq P(K_n)+m|K_n|\leq P(K_n)+\int_{K_n}g\] using also the translation invariance of the perimeter. We then rather set $\widetilde{K_n}:=K_n-x$. This argument ensures that the sequence $(\widetilde{K_n})$ is still minimizing with the additionnal property that $\widetilde{K_n}\subset B_{2r'}$ for each $n$. We will keep denoting it $K_n$.

Now, $K\mapsto \int_Kg$ satisfies \eqref{eq:hypR4}: if $D\in\K^N$ and $K'\subset K\subset  D$ are convex bodies, then \begin{equation}\label{eq:intg_R}\left|\int_Kg-\int_{K'}g\right|\leq\|g\|_{L^\infty(D)}|K\setminus K'|.\end{equation}
Using the Blaschke selection theorem and \eqref{eq:equivHausVol} we can extract a subsequence (still denoted $(K_n)$) and a compact convex $K^*$ such that $K_n\rightarrow K^*$ for the Hausdorff distance and in volume. In particular $|K^*|=V_0$. Thanks to \eqref{eq:intg_R} we have that $\int_{K_n}g\rightarrow \int_{K^*}g$ and $P(K_n)\rightarrow P(K^*)$ by continuity of the perimeter for convex domains (see for instance \cite[Proposition 2.4.3, (ii)]{BB}). We thus get existence.
\item 
Let $(K_n)$ be a minimizing sequence for the second problem. Since $V_\alpha$ is nonnegative we immediately get that $(P(K_n))$ is bounded, getting thus from \eqref{eq:diamPV} that the sequence $\diam(K_n)$ is bounded as well. By translation invariance of the perimeter and of $V_\alpha$ there is not loss of generality in assuming that there exists a compact set $D$ such that $K_n\subset D$ for each $n$. 

Let us now show that $V_\alpha$ verifies \eqref{eq:hypR4}. This was done in \cite[Equation (2.11)]{KM2}, but we reproduce hereafter the short argument for the convenience of the reader. Let $D\in\K^N$ and $K'\subset K\subset D$ be convex bodies. We set $v_E(x):=\int_E|x-y|^{\alpha-N}dy$ for any compact set $E$ and write $f(x,y):=|x-y|^{\alpha-N}$. We have \begin{align}\nonumber 0\leq V_\alpha(K)-V_\alpha(K')&=\int_K\int_Kf-\int_K\int_{K'}f+\int_{K\setminus K'}\int_{K'}f\\\nonumber&= \int_{K\setminus K'}v_K+v_{K'}\\ &\leq 2|K\setminus K'|\left(\int_{B_D}\frac{dy}{|y|^{N-\alpha}}\right)\label{eq:RV_alpha}\end{align} with $B_D$ a ball of volume $|D|$, where we used that \[v_{K'}(x)\leq v_K(x)=\int_K\frac{dy}{|x-y|^{N-\alpha}}=\int_{x-K}\frac{dy}{|y|^{N-\alpha}}\leq \int_{B_D}\frac{dy}{|y|^{N-\alpha}}\] thanks to the Hardy-Littlewood inequality and since $|K|\leq |D|$.

Since $V_\alpha$ verifies \eqref{eq:hypR4} and $K_n\subset D$ for all $n\in \N$ we conclude to existence as before using the Blaschke selection theorem.
\end{enumerate}

\noindent{\bf Regularity:} We proved respectively in \eqref{eq:intg_R} and \eqref{eq:RV_alpha} that $K\mapsto \int_Kg$ and $V_\alpha$ satisfy \eqref{eq:hypR4}. We can therefore apply Theorem \ref{th:shape3} to get that any minimizer is $C^{1,1}$. 
\end{proof}

\subsection{PDE and Spectral examples}\label{sect:pde}

We now focus on more difficult examples, which will lead to the proof of Theorem \ref{th:intro} given in the introduction.
Let us first set some notations and definitions.
 
  If $\Om\subset \R^N$ is a bounded Lipschitz open set we denote respectively by 
 \begin{align} \nonumber 0<\lambda_1(\Om)\leq\lambda_2(\Om)\leq\cdot\cdot\cdot \leq\lambda_n(\Om)\leq\cdot\cdot\cdot\nearrow+\infty\\ 0=\mu_1(\Om)\leq\mu_2(\Om)\leq\cdot\cdot\cdot\leq\mu_n(\Om)\leq \cdot\cdot\cdot\nearrow+\infty\nonumber\end{align}
  the nondecreasing sequence of the Dirichlet and Neumann Laplacian eigenvalues associated to $\Om$ (see for example \cite{H20Sha} for more details). We also define $\tau(\Om)$ the torsional rigidity of $\Om$ as
  $$\tau(\Om)=\int_{\Om}u_{\Om}dx=-2\min\left\{\int_{\Om}\frac{|\nabla u|^2}{2}- \int_{\Om}fu,\ u\in H^1_0(\Om)\right\}$$
  where $u_{\Om}$ is the unique solution of 
  \begin{equation}\label{eq:torsion}\begin{cases}-\Delta u=1 \text{ in } \Om\\ u\in H^1_0(\Om)\end{cases}\end{equation}
 
For any convex body $K\in \K^N$ we will frequently use the notation $\Om_K:=\text{Int}(K)$, and then define $\lambda_n(K):=\lambda_n(\Om_K)$,  $\mu_n(K):=\mu_n(\Om_K)$ for $n\in \N^*$, and $\tau(K):=\tau(\Om_K)$.

We are now ready to state the main result of this section, which will be proved later on in Sections \ref{sect:Dirichlet} and \ref{sect:Neumann}.

\begin{theorem}\label{th:examples} 
 Let $n\in \N^*$, $N\geq2$. Then any $R\in\{\lambda_{n}, \mu_{n}, \tau\}$ satisfy \eqref{eq:hypR3} and \eqref{eq:hypR4}, namely 
for every  $D'\subset D \subset \R^N$ convex bodies there exists $C=C(D',D,R)$ such that for any $K'\subset K$ lying in $\K^N_{D',D}$  (defined in \eqref {eq:defKD'D})\begin{equation*} \left|R(K)-R(K')\right|\leq C|K\setminus K'|.\end{equation*}
\end{theorem}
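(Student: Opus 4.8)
\textbf{Proof plan for Theorem \ref{th:examples}.} The strategy is to establish, for each functional $R\in\{\lambda_n,\mu_n,\tau\}$, a one-sided comparison of the form $0\le R(K')-R(K)\le C|K\setminus K'|$ (or its sign-reversed analogue, depending on monotonicity), uniformly over $K'\subset K$ in $\K^N_{D',D}$; the absolute value bound then follows since one of the two differences always vanishes. The common mechanism is variational: each of $\lambda_n$, $\mu_n$, $\tau$ has a min-max (or min) characterization via Rayleigh-type quotients, and a volume-controlled perturbation of the domain produces a volume-controlled perturbation of the optimal value, provided one controls the relevant Sobolev norms of eigenfunctions (resp. torsion function) uniformly — which is exactly where the two-sided containment $D'\subset K'\subset K\subset D$ enters, through uniform elliptic estimates on convex bodies sandwiched between $D'$ and $D$.

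\emph{Torsion.} For $\tau$, monotonicity for inclusion gives $\tau(K')\le\tau(K)$. For the reverse bound, let $u=u_{\Om_K}$ solve \eqref{eq:torsion} on $\Om_K$; extending $u$ by truncation or using $u$ as a competitor is not immediate since $u\notin H^1_0(\Om_{K'})$, so instead I would compare the energies: using the energy formulation $\tau(\Om)=\int_\Om u_\Om$, write $\tau(K)-\tau(K')=\int_{\Om_K}u_K-\int_{\Om_{K'}}u_{K'}$ and split as $\int_{\Om_K\setminus\Om_{K'}}u_K+\int_{\Om_{K'}}(u_K-u_{K'})$. The first term is $\le\|u_K\|_{L^\infty}|K\setminus K'|$, and $\|u_K\|_{L^\infty}$ is uniformly bounded for $\Om_K\subset\Om_D$ by the maximum principle (comparison with the torsion function of $D$, or of a ball containing $D$). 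The second term requires showing $\int_{\Om_{K'}}(u_K-u_{K'})\lesssim|K\setminus K'|$; here I would use that $u_K-u_{K'}$ is harmonic on $\Om_{K'}$ with boundary data $u_K|_{\partial\Om_{K'}}$, and that $u_K$ vanishes on $\partial\Om_K$ so $|u_K|$ on $\partial K'$ is controlled by $\dist(\cdot,\partial K)\cdot\|\nabla u_K\|_{L^\infty}$, with the gradient bound uniform by interior-type estimates using $D'\subset K'$ (this keeps the boundary layer at controlled distance). Alternatively, and more cleanly, one can use the dual (Dirichlet-integral) formulation and a suitable extension/cutoff built from $D'$.

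\emph{Dirichlet eigenvalues.} For $\lambda_n$, monotonicity gives $\lambda_n(K)\le\lambda_n(K')$. For the reverse inequality I would use the Courant–Fischer characterization
\[
\lambda_n(K')=\min_{\substack{S\subset H^1_0(\Om_{K'})\\ \dim S=n}}\ \max_{0\ne v\in S}\frac{\int_{\Om_{K'}}|\nabla v|^2}{\int_{\Om_{K'}}v^2},
\]
and plug in as test space the span of (a localized modification of) the first $n$ Dirichlet eigenfunctions of $K$. The difficulty is that these do not lie in $H^1_0(\Om_{K'})$; the fix is to multiply them by a cutoff $\varphi$ equal to $1$ on $D'$ and supported in $\Om_{K'}$ — but this cutoff must be chosen once and for all depending only on $D'$, not on $K'$, which forces us to cut off near $\partial D'$ rather than near $\partial K'$, losing information. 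A better route, and the one I expect to work, is to use the uniform boundedness of Dirichlet eigenfunctions in $L^\infty$ and in $C^{0,1/2}$ (Lipschitz / Hölder) on convex domains between $D'$ and $D$ — convexity gives such uniform estimates — together with the Rayleigh quotient of $v_i\,\mathbbm{1}_{\Om_{K'}}$ minus a correction; the correction's energy is controlled by $\int_{\Om_K\setminus\Om_{K'}}|\nabla v_i|^2$, and one shows this is $\lesssim|K\setminus K'|$ using the uniform gradient bound together with the fact that $\Om_K\setminus\Om_{K'}$ is a thin shell near $\partial K$. This comparison-of-quotients computation is the technical heart; I expect the paper to carry it out carefully, handling the denominator perturbation $\int v_i^2 - \int_{\Om_{K'}}v_i^2\le\|v_i\|_\infty^2|K\setminus K'|$ as well.

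\emph{Neumann eigenvalues.} This is the main obstacle. Neumann eigenvalues are \emph{not} monotone under inclusion, so there is no free one-sided bound, and $H^1$ functions on $\Om_{K'}$ do not extend canonically to $\Om_K$. One must compare in both directions. The workable approach, using the variational characterization
\[
\mu_n(K')=\min_{\substack{S\subset H^1(\Om_{K'})\\ \dim S=n}}\ \max_{0\ne v\in S}\frac{\int_{\Om_{K'}}|\nabla v|^2}{\int_{\Om_{K'}}v^2},
\]
is: for the bound $\mu_n(K')-\mu_n(K)\lesssim|K\setminus K'|$, take the first $n$ Neumann eigenfunctions of $K$, \emph{restrict} them to $\Om_{K'}$ (this is allowed, no cutoff needed), and estimate how the Rayleigh quotients change — both numerator and denominator decrease by at most $\|v_i\|_{H^1(\text{shell})}^2\lesssim|K\setminus K'|$, using uniform $H^1\cap L^\infty$ (indeed $W^{1,\infty}$, by convexity) bounds on Neumann eigenfunctions of domains between $D'$ and $D$; one also needs the span of the restrictions to remain $n$-dimensional, which holds once $|K\setminus K'|$ is small because the Gram matrix is close to the identity (again via uniform $L^\infty$). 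For the reverse bound $\mu_n(K)-\mu_n(K')\lesssim|K\setminus K'|$, one needs a \emph{uniform} $H^1$-extension operator $E:H^1(\Om_{K'})\to H^1(\Om_K)$ with operator norm bounded independently of $K'$ (for convex domains containing the fixed $D'$, such extensions exist with uniform constants — e.g. via a Lipschitz/Stein-type extension whose constant depends only on a lower bound for the "cone opening", which convexity plus $D'\subset K'\subset D$ supplies); then feed extended eigenfunctions of $K'$ into the Rayleigh quotient on $\Om_K$ and control the error by the volume of the shell $\Om_K\setminus\Om_{K'}$. Assembling these, and being careful that all geometric constants (extension norm, $W^{1,\infty}$ bounds on eigenfunctions, spectral gaps needed to keep test spaces nondegenerate) depend only on $(D',D)$, gives \eqref{eq:hypR4}, and hence \eqref{eq:hypR3} by Proposition \ref{th:stronghypR}.
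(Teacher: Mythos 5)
Your Neumann argument is essentially the paper's: a uniform extension operator plus uniform $W^{1,\infty}$ bounds on Neumann eigenfunctions, fed into the min-max with numerator and denominator errors controlled by the volume of the symmetric difference (with the one caveat that the extension must be bounded $W^{1,\infty}\to W^{1,\infty}$, not merely on $H^1$, since the shell errors are estimated by $\|\nabla(Ev)\|_{L^\infty}^2\,|K\Delta K'|$). The torsion and Dirichlet parts, however, each contain a gap. For the torsion, the step $\int_{\Om_{K'}}(u_K-u_{K'})\lesssim|K\setminus K'|$ does not follow from harmonicity plus the pointwise bound $u_K\le\|\nabla u_K\|_{L^\infty}\dist(\cdot,\partial K)$ on $\partial K'$: the maximum principle only yields $\|u_K-u_{K'}\|_{L^\infty(\Om_{K'})}\le\|\nabla u_K\|_{L^\infty}\sup_{\partial K'}\dist(\cdot,\partial K)$, and this supremum behaves like $|K\setminus K'|^{1/N}$ rather than $|K\setminus K'|$ (cut a corner off a cube: the removed simplex has volume $\sim\eps^N$ while the new boundary face sits at depth $\sim\eps$ from $\partial K$). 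The estimate can be repaired --- test the harmonic difference against the torsion function of $\Om_{K'}$ via Green's identity to get $\int_{\Om_{K'}}(u_K-u_{K'})\le\|\nabla u_{K'}\|_{L^\infty}\int_{\partial K'}u_K\,d\H^{N-1}$, and then use the coarea-type inequality $\int_{\partial K'}\dist(\cdot,\partial K)\,d\H^{N-1}\le|K\setminus K'|$ valid for nested convex bodies --- but neither ingredient appears in your plan.

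For the Dirichlet eigenvalues the problem is more basic: the restriction of $v_i$ to $\Om_{K'}$ is not in $H^1_0(\Om_{K'})$, so it is not admissible in the Courant--Fischer characterization of $\lambda_n(K')$; you acknowledge this, but the ``correction'' you invoke is never constructed, and there is no reason its Dirichlet energy should be controlled by $\int_{\Om_K\setminus\Om_{K'}}|\nabla v_i|^2$ (the natural correction, the harmonic extension into $\Om_{K'}$ of the trace $v_i|_{\partial K'}$, has energy governed by an $H^{1/2}$ trace norm, not by the energy in the shell). The paper circumvents both difficulties with a single device: from the radial functions of $K$ and $K'$ it builds a bi-Lipschitz map $\phi:K'\to K$, with Lipschitz constants depending only on $(D',D)$, which equals the identity outside a set of volume at most $C|K\setminus K'|$; then $u_K\circ\phi\in H^1_0(\Om_{K'})$ is an admissible competitor for $\tau(K')$ whose energy differs from that of $u_K$ only on that small set, and the Dirichlet case follows for free from the torsion case via Bucur's inequality $|\lambda_n(\Om)-\lambda_n(\Om')|\le C(n)\lambda_n(\Om)^{N/2+1}\lambda_n(\Om')\,|\tau(\Om)-\tau(\Om')|$ together with $\lambda_n(K)\le\lambda_n(D')$. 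Some such transport map (or another genuinely admissible test space) is what your torsion and Dirichlet arguments are missing.
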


\begin{remark}\label{rk:improvment}The fact that $K'\subset K$ is not essential to ensure that Lipschitz estimates hold. In fact, one has that \[\exists C>0, \forall (K,K')\in \K^N_{D',D}, \;\; |R(K)-R(K')|\leq C|K\Delta K'|\]
 by applying Theorem \ref{th:examples} with $K$ and $K\cup K'$ on the one hand, $K'$ and $K\cup K'$ on the other hand.\end{remark}

As a consequence, combined with  Corollary \ref{th:shapecor} and Theorem \ref{th:shape3}, we are able to prove Theorem \ref{th:intro}. 

\noindent{\bf Proof of Theorem \ref{th:intro}:} Recall that $R(K):=F(|K|, \tau(K), \lambda_{1}(K),\ldots,\lambda_{n}(K),\mu_{1}(K),\ldots,\mu_{n}(K))$ for some $F:(0,+\infty) \times (0,+\infty)\times (0,+\infty)^n\times\R_{+}^n \rightarrow\R$ locally Lipschitz. Let us show that $R$ satifies \eqref{eq:hypR4}, so that it also satisifes \eqref{eq:hypR3} (thanks to Proposition \ref{th:stronghypR}) and Corollary \ref{th:shapecor} and Theorem \ref{th:shape3} give the results. 

Let $D_1\subset D_2\in\R^N$ be convex bodies and let $K,K'\in \K^N_{D_1,D_2}$ with $K'\subset K$. Set $L=K$ or $L=K'$. Then from monotonicity of Dirichlet eigenvalues and torsion, for any $k\in \N^*$ it holds \begin{align}\nonumber \lambda_k(D_2)\leq \lambda_k(L)\leq \lambda_k(D_1) \\ \tau(D_1)\leq \tau(L)\leq \tau(D_2)\nonumber\end{align} Moreover, since $\mu_k(L)\leq\lambda_k(L)\leq\lambda_k(D_1)$ we have for any $k\in \N^*$  \begin{equation}\nonumber \mu_k(L)\leq\lambda_k(D_1)\end{equation}Also, \[|D_1|\leq |L|\leq |D_2|\]Putting these four estimates together and  using that $F$ is locally Lipschitz we find $C(F,D_1,D_2)$ such that \begin{eqnarray*}\nonumber|R(K)-R(K')|&\leq& C(F,D',D)\big(\sum_{k=1}^n|\lambda_k(K)-\lambda_k(K')|\\&&+\sum_{k=1}^n|\mu_k(K)-\mu_k(K')|+|\tau(K)-\tau(K')|+\left||K|-|K'|\right|\big) \end{eqnarray*}Applying Theorem \ref{th:examples} for $\lambda_k, \mu_k$ and $\tau$ and noticing that $\left||K|-|K'|\right|=|K\setminus K'|$ ensures that $R$ satisfies \eqref{eq:hypR4}. The result follows.\qed
  
\subsubsection{A general existence result}
In this short section we show a general existence result for the minimization among convex sets of a functionnal of the type $P+R$, where $R$ is mostly thought of as a PDE-type functional. Using mild continuity of $R$, we show existence of a minimizer under additional box and volume constraints, and we also show existence in the unconstrained case with coercivity assumptions of $R$.
The statement is as follows.

\begin{theorem}\label{th:general_existence}(i) Let $R:\K^N\rightarrow\R$ be lower-semi-continuous for the Hausdorff convergence of convex bodies. Let $D\in \K^N$ and $0<V_0<|D|$. Then there exists a minimizer to the problem
\[\inf\left\{P(K)+R(K),\ K\in \K^N,\ K\subset D,\ |K|=V_0\right\}\]
(ii) Let $V_0>0$. Let $n\in \N^*$ and $F:(\R^+)^{2n+2}\rightarrow\R$ be coercive (meaning $\lim_{|x|\to\infty}F(x)=+\infty$) and lower-semi-continuous, and set 
\[R(K):=F(|K|, \tau(K), \lambda_{1}(K),\ldots,\lambda_{n}(K),\mu_{1}(K),\ldots,\mu_{n}(K))\] Then there exists minimizers to the problems
\[\inf\left\{P(K)+R(K),\ K\in \K^N\right\}\]
\[\inf\left\{P(K)+R(K),\ K\in \K^N, \ |K|=V_0\right\}\]
\end{theorem}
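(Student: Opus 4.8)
The plan is to run the direct method of the calculus of variations for each problem, with the key point being that the sublevel sets of the functional confine the minimizing sequence to a compact class of convex bodies for $d_H$, on which the Blaschke selection theorem applies and all relevant quantities are (semi-)continuous.

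\emph{Part (i).} Let $(K_j)$ be a minimizing sequence for $\inf\{P(K)+R(K),\ K\in\K^N,\ K\subset D,\ |K|=V_0\}$; note this infimum is finite since the class is non-empty (any small ball of volume $V_0$ suitably translated inside $D$ works, as $V_0<|D|$) and $P+R$ is bounded below on $\{K\subset D\}$ because $P(K)\le P(D)$ and $R$ is lower-semi-continuous, hence bounded below, on the $d_H$-compact class $\{K\in\K^N, K\subset D\}$. Since all $K_j$ lie in $D$, the Blaschke selection theorem gives a subsequence (not relabelled) with $K_j\to K^*$ for $d_H$, where $K^*$ is a non-empty compact convex set with $K^*\subset D$. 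By Proposition \ref{prop:hausdorff}(1) we also have $|K_j\Delta K^*|\to 0$, so $|K^*|=V_0>0$, which forces $\mathrm{Int}(K^*)\neq\emptyset$, i.e.\ $K^*\in\K^N$, and $K^*$ satisfies the constraints. Finally $P$ is continuous for $d_H$ on convex bodies (e.g.\ \cite[Proposition 2.4.3(ii)]{BB}) and $R$ is lower-semi-continuous by hypothesis, so $P(K^*)+R(K^*)\le\liminf_j\big(P(K_j)+R(K_j)\big)=\inf$, proving $K^*$ is a minimizer.

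\emph{Part (ii).} Here the difference is that there is no a priori box constraint, so the main issue is to produce one. Fix a minimizing sequence $(K_j)$. Using translation invariance of $P$, of $|\cdot|$, and of the spectral and torsion functionals, we may translate each $K_j$ freely. The energy values $P(K_j)+R(K_j)$ are bounded, say by $C$. Since $F$ is coercive, $R$ is bounded below, hence $P(K_j)\le C-\inf R$ is uniformly bounded; combined with the volume lower bound in the constrained problem (or, in the unconstrained problem, noting that $|K_j|$ cannot tend to $0$ or this would make some eigenvalue blow up and hence $F\to+\infty$ by coercivity, and cannot tend to $+\infty$ for the same reason via $|K|$ itself being a coordinate of $F$), the isodiametric-type bound \eqref{eq:diamPV}, $\diam(K)\le C(N)P(K)^{N-1}/|K|^{N-2}$, shows $\diam(K_j)$ is uniformly bounded. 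Translating so that each $K_j$ contains the origin, we get $K_j\subset D$ for a fixed ball $D$, and we are back in the compact setting: Blaschke selection yields $K_j\to K^*$ for $d_H$ and in volume, $K^*$ is a convex body (its volume is bounded below by the above argument), and it satisfies the volume constraint in the constrained case. Lower-semi-continuity of $P+R$ then follows from continuity of $P$, continuity of $|\cdot|$, and the lower-semi-continuity for $d_H$ of $\lambda_n$, $\mu_n$, $\tau$ on convex bodies together with lower-semi-continuity of $F$ — so $K^*$ is a minimizer.

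\emph{Main obstacle.} The only delicate point is the compactness step in (ii): turning the energy bound into a uniform diameter bound and then a uniform box. This requires (a) a uniform lower bound on $|K_j|$ — immediate with the volume constraint, but in the unconstrained case it must be extracted from coercivity of $F$ in the volume variable together with the blow-up of Dirichlet eigenvalues as the inradius degenerates — and (b) the convex-geometric inequality \eqref{eq:diamPV}. Once the sequence is confined to a fixed ball, everything else is routine: Blaschke compactness, equivalence of Hausdorff and volume convergence on bounded convex bodies (Proposition \ref{prop:hausdorff}(1)), continuity of the perimeter, and lower-semi-continuity of the PDE functionals under Hausdorff (indeed Mosco/$\gamma$-) convergence of convex domains.
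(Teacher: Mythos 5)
Your overall strategy (direct method, Blaschke selection, confinement to a fixed box via a diameter bound) is the same as the paper's, and part (i) is essentially the paper's argument. In part (ii) you reach the diameter bound by a different route: the paper uses John's ellipsoid, whereas you first extract a uniform volume lower bound from coercivity of $F$ via Faber--Krahn and then invoke \eqref{eq:diamPV}. Your ordering is legitimate (and for $N\geq 3$ the inequality \eqref{eq:diamPV} genuinely requires that volume lower bound, which you do supply before using it), so this part works. One small remark on (i): the class $\{K\in\K^N,\ K\subset D\}$ is \emph{not} $d_H$-compact as a class of convex bodies (Hausdorff limits may lose interior points), so your justification that $R$ is bounded below there is not valid; fortunately finiteness of the infimum need not be checked a priori, since the semicontinuity inequality at the limit already excludes the value $-\infty$.

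The genuine gap is in your final step of (ii). You deduce lower semicontinuity of $R=F(|\cdot|,\tau,\lambda_1,\ldots,\mu_n)$ from ``lower semicontinuity of the PDE functionals together with lower semicontinuity of $F$''. This implication is false in general: if $g$ is merely lower semicontinuous and $F$ is lower semicontinuous but not monotone (take $F(t)=-t$), then $F\circ g$ need not be lower semicontinuous. Since $F$ here is an arbitrary coercive lsc function, you must establish genuine \emph{continuity} of $\lambda_k$, $\mu_k$ and $\tau$ along the minimizing sequence. This is available, but it requires an observation you omit: since $|K^*|>0$, choose $D'\in\K^N$ with $D'\subset \text{Int}(K^*)$; by Proposition \ref{prop:hausdorff} (item 2) one has $D'\subset K_j$ for large $j$, so the $K_j$ eventually lie in $\K^N_{D',D}$ and hence satisfy a uniform $\eps$-cone condition (Remark \ref{th:rk_eps}). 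It is this uniform Lipschitz character that yields $\lambda_k(K_j)\to\lambda_k(K^*)$, $\tau(K_j)\to\tau(K^*)$ and, crucially, $\mu_k(K_j)\to\mu_k(K^*)$: for Neumann eigenvalues, Hausdorff or Mosco convergence of the domains alone is not a sufficient hypothesis, and the uniform cone condition is the ingredient that makes the cited continuity theorems apply. With continuity of all $2n+2$ coordinates in hand, lower semicontinuity of $F$ does give $\liminf_j R(K_j)\geq R(K^*)$, and the proof closes as you describe.
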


\begin{proof}
In both cases existence is proved using the direct method: let $(K_i)$ be a minimizing sequence.

\textbf{(i)}
Since $K_i\subset D$ for each $n$, then thanks to the Blaschke selection theorem and \eqref{eq:equivHausVol} we can extract a subsequence (still denoted $(K_i)$) and a compact convex $K^*$ such that $K_i\rightarrow K^*$ for the Hausdorff distance and in volume. We can pass to the limit in $|K_i|=V_0$ to get $|K^*|=V_0>0$, so that $K^*$ has non-empty interior. We deduce that $\varliminf R(K_i)\geq R(K^*)$ thanks to the hypothesis made on $R$ and that $P(K_i)\rightarrow P(K^*)$ by continuity of the  perimeter for convex domains (see for instance \cite[Proposition 2.4.3, (ii)]{BB}), thus getting existence.

\textbf{(ii)} We start with existence for the first of the two problems. Thanks to John's ellipsoid Lemma, there exists $c_i\in \R^N$ and ellipsoids $E_i$ such that 
\[E_i\subset K_i\subset c_i+N(E_i-c_i)\]
We have by monotonicity of the perimeter for convex bodies $P(K_i)\geq P(E_i)$, while we also have $\text{diam}(K_i)\leq N\text{diam}(E_i)$. As a consequence, if we assume by contradiction that (up to subsequence) $\text{diam}(K_i)\rightarrow+\infty$, then we first deduce $\text{diam}(E_i)\rightarrow+\infty$ so that $P(E_i)\rightarrow+\infty$, whence $P(K_i)\rightarrow+\infty$. The function $F$ being coercice and lower-semi-continuous it is therefore bounded from below, and we get the contradiction $P(K_i)+R(K_i)\rightarrow+\infty$. Therefore $\text{diam}(K_i)$ is bounded and we can assume by translation invariance of $P$ and $R$ that there exists a compact set $D\subset\R^N$ such that $K_i\subset D$ for each $i$.
Thanks to the Blaschke selection theorem and \eqref{eq:equivHausVol} we can extract a subsequence (still denoted $(K_i)$) and a compact convex $K^*$ such that $K_i\rightarrow K^*$ for the Hausdorff distance and in volume. The case $|K^*|=0$ is excluded, since it would lead to $|E_i|\rightarrow0$ and then $+\infty\leftarrow\lambda_1(E_i)\leq N^2\lambda_1(K_i)$ by monotonicity of $\lambda_1$, which yields $R(K_i)\rightarrow+\infty$ by coercivity of $F$ hence the contradiction $P(K_i)+R(K_i)\rightarrow+\infty$. As a consequence $|K^*|>0$, which means that $K^*$ has non-empty interior, and in particular there exists $D'\in\K^N$ such that $D'\subset \text{Int}(K^*)$. Since $d_H(K_i,K^*)\rightarrow 0$ we know thanks to \eqref{eq:opensubH} that $K_i\in \K^N_{D',D}$ for large enough $i$. As a consequence, the $K_i$ are uniformly Lipschitz in the sense that they verify the $\eps$-cone condition for some $\eps$ independent of $i$ (see Definition \ref{th:defeps} and Remark \ref{th:rk_eps}). We thus have continuity $\lambda_k(K_i)\rightarrow\lambda_k(K^*)$ and $\tau(K_i)\rightarrow\tau(K^*)$ (see \cite[Theorem 2.3.18]{H06}) and $\mu_k(K_i)\rightarrow\mu_k(K^*)$ (see \cite[Theorem 2.3.25]{H06}). Recalling the lower-semi-continuity of $F$ we deduce $\varliminf R(K_i)\geq R(K^*)$, and by continuity of the  perimeter for convex domains we also have $P(K_i)\rightarrow P(K^*)$. This finishes the proof of existence for the first problem.

Existence for the second problem is shown with the same argument, by noticing that the volume constraint passes to the limit.

\end{proof}

 \subsubsection{Selected examples}\label{ssect:selected}

Before moving on to the proof of Theorem \ref{th:examples} (which is the object of sections \ref{sect:Dirichlet} and \ref{sect:Neumann}), we discuss here some specific examples where $R$ involves spectral functionals and for which we can prove existence without a box constraint (in the spirit of Theorem \ref{th:general_existence} (ii)). We will make use of Theorem \ref{th:examples} in this section.

We start by considering spectral problems with a perimeter constraint, which have been studied in the literature without the additionnal convexity constraint (see for instance \cite{BBH}, \cite{dPV}, \cite{Bog}, \cite{BO}). Namely, given $p_0>0$, we are interested in the minimization problems \begin{equation}\label{eq:perim_cons}\inf\{\lambda_n(K),\ K\in \K^N,\ P(K)=p_0\}\end{equation} In \cite{BBH}, the authors use convexity for proving existence as well as $C^{\infty}$ regularity and some qualitative properties of minimizers of $\lambda_2$ under perimeter constraint in two dimensions. In fact, although their problem is set without a convexity constraint, they are able to show that solutions are in fact convex, thus yielding a bit of regularity to start with. On the other hand, in dimension $N=3$ there are eigenvalues for which the expected solutions are not convex (see \cite[Figure 2]{BO}), so that the convexity constraint would thus be meaningful in the minimization.

In our case we can prove existence together with $C^{1,1}$ regularity of minimizers. This is the object of next result. 
\begin{prop}Let $n\in\N^*$, $N\geq2$ and $p_0>0$. Then there exists a solution to problem \eqref{eq:perim_cons} and any such solution is $C^{1,1}$. \end{prop}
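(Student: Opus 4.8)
The plan is to minimize $\lambda_n$ under the perimeter constraint $P(K)=p_0$ by the direct method, and then to recast the minimizer as a solution of an unconstrained problem of the form \eqref{eq:pbshape} to which Corollary \ref{th:shapecor} (via Theorem \ref{th:examples}) applies. First I would establish existence: take a minimizing sequence $(K_i)$ for \eqref{eq:perim_cons}. Since $P(K_i)=p_0$ is fixed and $\lambda_n(K_i)$ is bounded, the diameters are controlled; indeed, using John's ellipsoid as in the proof of Theorem \ref{th:general_existence}(ii), if $\diam(K_i)\to\infty$ then $P(K_i)\to\infty$, contradicting $P(K_i)=p_0$. Hence, after translation, $K_i\subset D$ for a fixed convex body $D$, and Blaschke selection together with \eqref{eq:equivHausVol} yields a limit $K^*$ with $K_i\to K^*$ in Hausdorff distance and in volume. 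The non-degeneracy $|K^*|>0$ follows as before: if $|K^*|=0$ then $|E_i|\to0$ and $\lambda_1(E_i)\to\infty$ force $\lambda_n(K_i)\to\infty$, a contradiction. Continuity of the perimeter for convex bodies gives $P(K^*)=p_0$, and the $\eps$-cone continuity of $\lambda_n$ (the $K_i$ being uniformly Lipschitz once $D'\subset\Int(K^*)\subset K_i$) gives $\lambda_n(K^*)\le\varliminf\lambda_n(K_i)$, so $K^*$ is a minimizer.

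Next I would remove the perimeter constraint. Since $|K^*|>0$, $K^*$ is a convex body and $\lambda_n$ is finite and positive on a neighborhood of $K^*$. The idea is to relate \eqref{eq:perim_cons} to the free problem $\inf\{P(K)+c\,\lambda_n(K) : K\in\K^N,\ K\subset D\}$ for a suitable constant $c>0$, or alternatively to penalize the constraint $P(K)=p_0$. The cleanest route is a Lagrange-type argument adapted to the inward cutting used in Theorem \ref{th:shape1}: show that $K^*$ is a quasi-minimizer of the perimeter under convexity constraint (Definition \ref{def:qmpc}). Given $\widetilde K\subset K^*$ convex with $|K^*\setminus\widetilde K|$ small, $\widetilde K$ need not have perimeter $p_0$, but $P(\widetilde K)\le P(K^*)=p_0$ by monotonicity of the perimeter for convex bodies; one then dilates $\widetilde K$ slightly (multiplying by $t\ge1$) to restore $P(t\widetilde K)=p_0$, with $t-1=O(p_0-P(\widetilde K))=O(|K^*\setminus\widetilde K|)$ by the Lipschitz lower bound on the perimeter along dilations. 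Minimality of $K^*$ gives $\lambda_n(K^*)\le\lambda_n(t\widetilde K)$, and by scaling $\lambda_n(t\widetilde K)=t^{-2}\lambda_n(\widetilde K)$; combining with the Lipschitz estimate for $\lambda_n$ from Theorem \ref{th:examples} and the control on $t-1$ yields $P(K^*)\le P(\widetilde K)+\Lambda|K^*\setminus\widetilde K|$ for some $\Lambda$, which is exactly \eqref{eq:qmpc}.

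A subtlety is that after dilating $\widetilde K$ one may leave the box $D$; but since we only need the quasi-minimality inequality \eqref{eq:qmpc} — which does not involve $D$ — and since the original minimization \eqref{eq:perim_cons} has no box constraint, this is not an issue: $t\widetilde K$ is an admissible competitor for \eqref{eq:perim_cons} as long as $P(t\widetilde K)=p_0$, regardless of any box. Once $K^*$ is shown to be a quasi-minimizer of the perimeter under convexity constraint, Theorem \ref{th:shape1} gives directly that $K^*$ is $C^{1,1}$.

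The main obstacle is the dilation step: one must check that $\lambda_n$ and $P$ scale controllably and that the correction $t\to1$ as $|K^*\setminus\widetilde K|\to0$ at a linear rate, uniformly — in particular that $P(t\widetilde K)-P(\widetilde K)$ is comparable to $(t-1)P(\widetilde K)$ with constants independent of $\widetilde K$ in the relevant volume-neighborhood of $K^*$, which follows from $P(t\widetilde K)=t^{N-1}P(\widetilde K)$ and the fact that $P(\widetilde K)$ stays bounded above and below near $p_0$. With that in hand, and using that $\lambda_n(\widetilde K)$ is bounded near $\lambda_n(K^*)$, the estimate closes. Everything else is a routine repetition of the direct-method and cone-continuity arguments already used for Theorem \ref{th:general_existence}.
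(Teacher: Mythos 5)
Your overall plan (direct method for existence, then a reduction to quasi-minimality for regularity) is workable, but two steps need repair. First, the existence part: you bound the diameters by arguing that $\diam(K_i)\to\infty$ would force $P(K_i)\to\infty$ via John's ellipsoid. For $N\geq3$ this implication is false: a long thin needle-like ellipsoid (semi-axes $a,1/a,\dots,1/a$ with $a\to\infty$) has unbounded diameter but bounded surface area, so the constraint $P(K_i)=p_0$ alone does not control $\diam(K_i)$. The correct order — and the one the paper uses — is to first exploit $\lambda_1(K_i)\leq\lambda_n(K_i)\leq C$ and the Faber--Krahn inequality to obtain a uniform lower bound $|K_i|\geq c>0$, and only then apply $\diam(K)\leq C(N)P(K)^{N-1}/|K|^{N-2}$ (inequality \eqref{eq:diamPV}) to bound the diameters. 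You already have the Faber--Krahn ingredient in your non-degeneracy step, so this is a reordering, but as written the existence argument has a gap in dimension $N\geq3$.

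Second, the regularity part. Your route — verifying Definition \ref{def:qmpc} directly by rescaling the competitor to restore the perimeter constraint — is genuinely different from the paper's, which invokes \cite[Remark 3.6]{dPV} to find $\mu>0$ such that $K^*$ minimizes $\lambda_n+\mu P$ without constraint and then applies Corollary \ref{th:shapecor}; your route is more self-contained. However, your control of $t-1$ is circular: the claim $p_0-P(\widetilde K)=O(|K^*\setminus\widetilde K|)$ is not a statement about dilations — it compares $P(K^*)$ with $P(\widetilde K)$ and is precisely the quasi-minimality inequality \eqref{eq:qmpc} you are trying to establish. It is moreover false as a general fact about nested convex bodies: cutting a corner off a square at distance $\eps$ decreases the perimeter by $\sim\eps$ while removing volume $\sim\eps^2$. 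The non-circular way to close the argument is to bound $t$ from the eigenvalue side: admissibility of $t\widetilde K$ and scaling give $t^{2}\lambda_n(K^*)\leq\lambda_n(\widetilde K)$, while Theorem \ref{th:examples} gives $\lambda_n(\widetilde K)\leq\lambda_n(K^*)+C|K^*\setminus\widetilde K|$ once $|K^*\setminus\widetilde K|$ is small enough that $\widetilde K\supset D'$ for some fixed convex body $D'\Subset K^*$ (Proposition \ref{prop:hausdorff}). Hence $t^{N-1}\leq1+C'|K^*\setminus\widetilde K|$ and $P(K^*)=p_0=t^{N-1}P(\widetilde K)\leq P(\widetilde K)+C'p_0|K^*\setminus\widetilde K|$, which is \eqref{eq:qmpc}; Theorem \ref{th:shape1} then gives the $C^{1,1}$ regularity. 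With these two corrections your proof is valid.
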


\begin{proof}The proof is divided into proof of existence and proof of regularity. 

\noindent{\bf Existence:} We use the direct method. Let $(K_i)$ be a minimizing sequence for \eqref{eq:perim_cons}. By definition there exists $C>0$ such that $\lambda_n(K_i)\leq C$ for each $i$. Since $\lambda_1(K_i)\leq\lambda_n(K_i)\leq C$, we deduce using Faber-Krahn inequality \[C|K_i|^{2/N}\geq \lambda_1(K_i)|K_i|^{2/N}\geq\lambda_1(B)|B|^{2/N}=:C_N\] with $B$ the unit ball, giving \begin{equation}\label{eq:penal_lowvolume}|K_i|\geq \left(\frac{C_N}{C}\right)^{N/2}\end{equation} On the other hand the perimeters $P(K_i)$ are bounded from above (in fact $P(K_i)=p_0$), yielding that the $K_i$ are uniformly bounded (up to translation), using \eqref{eq:diamPV}. We therefore get existence by proceeding as in the proof of Theorem \ref{th:general_existence} (i): thanks to the Blaschke selection theorem and \eqref{eq:equivHausVol} we thus find a subsequence (still denoted $(K_i)$) converging to some compact convex set $K^*$ in the Hausdorff sense  and in volume. The lower bound on volumes \eqref{eq:penal_lowvolume} thus ensures that $|K^*|>0$, so that the convex $K^*$ has nonempty interior. Hence there exists $D'\in\K^N$ such that $D'\subset \text{Int}(K^*)$. Since $d_H(K_i,K^*)\rightarrow 0$ we know thanks to \eqref{eq:opensubH} that $K_i\in \K^N_{D',D}$ for large enough $i$. We deduce that $\lambda_n(K_i)\rightarrow \lambda_n(K^*)$ using that $\lambda_n$ satisfies \eqref{eq:hypR4} thanks to Theorem \ref{th:examples}, and that $P(K_i)\rightarrow P(K^*)$ by continuity of the perimeter for convex domains. This finishes the proof of the existence part.

\noindent {\bf Regularity:} Let $K^*$ be any minimizer for \eqref{eq:perim_cons}. Following \cite[Remark 3.6]{dPV} we can show that there exists $\mu>0$ such that $K^*$ minimizes \begin{equation}\label{eq:uncons}\inf\{\lambda_n(K)+\mu P(K),\ K\in \K^N\}\end{equation} As a consequence we can apply Corollary \ref{th:shapecor} to get that $K^*$ is $C^{1,1}$. \end{proof}

 We now move on to problems of the kind \eqref{eq:pbshapeconstvol} with a volume constraint and with $R$ of  spectral type. These type of problems are related to the study of Blaschke-Santalo diagrams, see \cite{FL21Bla} and the numerical results in \cite{F21Dia}.  Again, we can drop the box constraint and still get existence:

\begin{prop} Let $N\geq2$, $V_0>0$ and $n\in\N^*$. There exist minimizers to the problems 
\begin{equation}\label{eq:pbDirichlet} \inf\{P(K)+\lambda_n(K),\ K\in\K^N, |K|=V_0\}\end{equation}\begin{equation}\label{eq:pbNeumann}\inf\{P(K)\pm\mu_n(K),\ K\in \K^N,\ |K|=V_0\}\end{equation} and any minimizer is $C^{1,1}$.\end{prop}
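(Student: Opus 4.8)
The plan is to prove existence and $C^{1,1}$-regularity for the two problems \eqref{eq:pbDirichlet} and \eqref{eq:pbNeumann} following exactly the strategy already used for \eqref{eq:perim_cons} and in Theorem \ref{th:general_existence}. The key observation is that here the perimeter appears in the energy (not as a constraint), so the diameter bound is even easier to obtain, while the volume constraint prevents degeneration.

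\begin{proof} We treat the two problems simultaneously, writing $R=\lambda_n$ in the first case and $R=\pm\mu_n$ in the second case; in all cases $R$ satisfies \eqref{eq:hypR4} (hence \eqref{eq:hypR3}) by Theorem \ref{th:examples}.

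\noindent\textbf{Existence:} We use the direct method. Let $(K_i)$ be a minimizing sequence. In the case $R=\lambda_n$ (resp.\ $R=\mu_n$) the functional $R$ is nonnegative, so $P(K_i)\leq P(K_i)+R(K_i)\leq C$ and the perimeters are uniformly bounded. In the case $R=-\mu_n$, we use that $\mu_n(K)\leq\lambda_n(K)\leq C(N,n)|K|^{-2/N}=C(N,n)V_0^{-2/N}$ for any convex body $K$ of volume $V_0$ (by monotonicity and scaling of the Dirichlet eigenvalue on a ball, or directly by the Kröger-type bound), so again $P(K_i)\leq P(K_i)+R(K_i)+\mu_n(K_i)\leq C+C(N,n)V_0^{-2/N}$ and the perimeters are uniformly bounded. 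In every case, using \eqref{eq:diamPV} together with $|K_i|=V_0$ we get that $\mathrm{diam}(K_i)$ is uniformly bounded, so by translation invariance of $P$, $\lambda_n$ and $\mu_n$ we may assume $K_i\subset D$ for some fixed compact set $D$. By the Blaschke selection theorem and \eqref{eq:equivHausVol} we extract a subsequence with $K_i\to K^*$ in Hausdorff distance and in volume; passing to the limit in the constraint gives $|K^*|=V_0>0$, so $K^*$ has non-empty interior and there is $D'\in\K^N$ with $D'\subset\mathrm{Int}(K^*)$. By \eqref{eq:opensubH} we have $K_i\in\K^N_{D',D}$ for large $i$, hence the $K_i$ satisfy a uniform $\eps$-cone condition; this yields $\lambda_n(K_i)\to\lambda_n(K^*)$ and $\tau(K_i)\to\tau(K^*)$ (see \cite[Theorem 2.3.18]{H06}) and $\mu_n(K_i)\to\mu_n(K^*)$ (see \cite[Theorem 2.3.25]{H06}), while $P(K_i)\to P(K^*)$ by continuity of the perimeter on convex bodies. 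Therefore $K^*$ is a minimizer.

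\noindent\textbf{Regularity:} Let $K^*$ be any minimizer of \eqref{eq:pbDirichlet} or \eqref{eq:pbNeumann}. These are problems of the form \eqref{eq:pbshapeconstvol} with no box constraint; to apply Theorem \ref{th:shape3} we just need to place $K^*$ inside a box $D\in\K^N$ with $V_0<|D|$, which we can do since $K^*$ is compact, and we observe that adding the constraint $K\subset D$ does not change the minimality of $K^*$ among convex bodies contained in $D$ of volume $V_0$. Since $R\in\{\lambda_n,\pm\mu_n\}$ satisfies \eqref{eq:hypR4} by Theorem \ref{th:examples} (in the case $R=-\mu_n$, \eqref{eq:hypR4} holds for $-\mu_n$ as well, being the opposite of a functional satisfying the two-sided estimate \eqref{eq:hypR4}), Theorem \ref{th:shape3} applies and gives that $K^*$ is a quasi-minimizer of the perimeter under convexity constraint, hence $C^{1,1}$ by Theorem \ref{th:shape1}.
\end{proof}

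The main obstacle, such as it is, lies in the sign issue for the $-\mu_n$ functional: one must check that the a priori bound keeping the perimeters bounded still works (which is why the bound $\mu_n\leq\lambda_n\leq C V_0^{-2/N}$ under the volume constraint is needed), and that \eqref{eq:hypR4} is stable under taking the opposite of a functional — both of which are immediate. Everything else is a direct transcription of the arguments already carried out in the proof of Theorem \ref{th:general_existence}(ii) and in the preceding proposition.
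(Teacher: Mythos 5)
Your overall strategy coincides with the paper's: existence by the direct method (a priori perimeter bound, then a diameter bound via \eqref{eq:diamPV} and the volume constraint, then Blaschke selection and continuity of $P$, $\lambda_n$, $\mu_n$ on the uniformly Lipschitz class $\K^N_{D',D}$), and regularity by restricting to a box and invoking Theorem \ref{th:shape3}. Your remark that \eqref{eq:hypR4} is a two--sided estimate and hence stable under $R\mapsto -R$ is correct, as is the observation that a global minimizer of volume $V_0$ remains a minimizer after adding any box constraint $K\subset D$ with $K^*\subset D$ and $V_0<|D|$.

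The one step that does not survive scrutiny is the a priori perimeter bound in the $P-\mu_n$ case. The chain $\mu_n(K)\leq\lambda_n(K)\leq C(N,n)|K|^{-2/N}$ is false: there is no upper bound on Dirichlet eigenvalues of a convex body in terms of its volume alone. For the thin slab $R_\eps=[0,V_0\eps^{-1}]\times[0,\eps]\times[0,1]^{N-2}$ one has $\lambda_1(R_\eps)\sim\pi^2\eps^{-2}\to+\infty$ at fixed volume $V_0$ (the paper uses exactly this example to show that $P-\lambda_n$ is unbounded below without a box constraint). Monotonicity of $\lambda_n$ under inclusion goes the wrong way for an upper bound; you would need $K$ to \emph{contain} a ball of controlled radius, which volume alone does not provide for convex bodies. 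Your fallback justification, the Kr\"oger bound $\mu_n(K)\leq C(N,n)|K|^{-2/N}$, is a genuinely Neumann phenomenon (not inherited from $\lambda_n$) and is valid here, so the argument is repairable by deleting the first justification and keeping only the second. The paper instead avoids Kr\"oger: it uses the bound $\mu_n(K)\leq C_n(N)\,\mathrm{diam}(K)^{-2}$ valid for convex bodies, deduces $P(K_i)\leq C'(N)(1+\mathrm{diam}(K_i)^{-2})$ along the minimizing sequence, and runs a dichotomy on whether $\mathrm{diam}(K_i)\geq 1$ or not, combined with \eqref{eq:diamPV}, to bound the diameters; either route closes the existence proof.
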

\begin{proof} The regularity assertion is a consequence of Theorem \ref{th:intro}. Let us prove existence of a solution for the two family of problems:
\begin{enumerate}\item Existence is obtained by applying Theorem \ref{th:general_existence} (ii).

\item For the minimization of $P+\mu_n$ we can directly apply Theorem \ref{th:general_existence} (ii) to get existence. For the minimization of $P-\mu_{n}$, first note the inequalities 
\begin{equation}\label{eq:ineq_diam} \text{diam}(K)\leq C(N)\frac{P(K)^{N-1}}{|K|^{N-2}} \;\;\;\;\;\; \ \mu_n(K)\leq \frac{C_n(N)}{\text{diam}(K)^2}\end{equation} for any convex body $K$, for some constants $C(N)$ and $C_n(N)$ only depending on the indicated parameters (for the first, recall \eqref{eq:diamPV} and see for instance \cite[Proposition 2.1 (b)]{Ro} for the second). Let $(K_i)$ be some minimizing sequence for problem \eqref{eq:pbNeumann}. The sequence $(P(K_i)-\mu_n(K_i))$ being bounded from above by definition, we find $C>0$ such that \begin{equation}\nonumber P(K_i)\leq C+\mu_n(K_i)\leq C'(N)(1+ \text{diam}(K_i)^{-2})\end{equation} for some dimensional constant $C'(N)$, using the second inequality of \eqref{eq:ineq_diam}. Now, for fixed $i$ we either have  $\text{diam}(K_i)\geq 1$, in which case we deduce $P(K_i)\leq 2C'(N)$, or $\text{diam}(K_i)\leq 1$. Thanks to the first inequality of \eqref{eq:ineq_diam} this yields \begin{equation}\nonumber\text{diam}(K_i)\leq \max\left\{1,\frac{C(N)(2C'(N))^{N-1}}{V_0^{N-2}}\right\}\end{equation} Therefore, using the translation invariance of $P$ and $\mu_n$ we can find a compact set $D$ such that $K_i\subset D$ for each $i$. Recalling that $R(K):=\mu_n(K)$ verifies \eqref{eq:hypR4} thanks to Theorem \ref{th:examples}, the rest of the proof of existence is as in Theorem \ref{th:general_existence} (i).\end{enumerate}\end{proof}

\begin{remark}\begin{itemize}\item One can also wonder about the minimization \[\inf\{P(K)-\lambda_n(K),\ K\in\K^N,\ |K|=V_0\}\] In this case the problem is ill-posed, as the box constraint is needed to ensure existence. In fact, one can see that the infimum is $-\infty$, choosing the sequence of long thin rectangle $R_\eps:=[0,V_0\eps^{-1}]\times[0,\eps]\times[0,1]^{N-2}$ for which $P(R_\eps)\leq C_N\eps^{-1}$ for some dimensional constant $C_N>0$ while $\lambda_n(R_\eps)\sim V_0^{-2}\pi^2\eps^{-2}$. \item Thanks to the isoperimetric inequality and the Faber-Krahn inequality (respectively the Szego-Weinberger inequality), it is known that the unique solution up to translation to the minimization of $P+\lambda_1$ (respectively of $P-\mu_2$) is any ball $B$ of volume $V_0$. On the other hand, if $n\geq2$ (respectively $n\geq3$) the problem \eqref{eq:pbDirichlet} (respectively \eqref{eq:pbNeumann}) has $C^{1,1}$ solutions which are not analytically known.
\item Inspired by \cite{FL21Bla}, one could wonder about the regularity properties of solutions to
$$\min\{P(K), \;K\in\K^N, \;|K|=V_{0}, \lambda_{n}(K)=\ell_{0}\},\;\;\;\max\{\lambda_{n}(K), \;K\in\K^N, \;|K|=V_{0}, \;P(K)=p_{0}\},$$
where $p_{0}>0, \ell_{0}>0$. In \cite[Corollary 3.13]{FL21Bla} it is proven when $N=2$ and $n=1$ that these problems are equivalent (for suitable choices of $p_{0}$ and $\ell_{0}$) and that solutions are $C^{1,1}$. Nevertheless, we were not able to apply our regularity result to these cases, so the regularity of solutions of these problems remains open in other cases ($N\geq 3$ or $n\geq 2$), up to our knowledge.
\end{itemize}\end{remark}

\subsubsection{Torsional rigidity and Dirichlet eigenvalues} \label{sect:Dirichlet}
If $D'\subset D\subset \R^N$ are convex bodies, we still denote by $\K^N_{D',D}$ the set \begin{equation}\nonumber\K^N_{D',D}:=\{K\in \K^N, \ D'\subset K\subset D\}\end{equation} Let us state now the main result of this section, which basically restates Theorem \ref{th:examples} for $\tau$. Indeed as we will see below, the proof of Theorem \ref{th:examples} for $\lambda_{n}$ will be a consequence of the same result for $\tau$.

\begin{prop}\label{th:LipEf}  Let $N\geq2$ and $D'\subset D \subset \R^N$ be convex bodies. Then there exists $C=C(D',D)$ such that for any $K'\subset K$ lying in $\K^N_{D',D}$ \begin{equation} 0\leq \tau(K)-\tau(K')\leq C|K\setminus K'|\label{eq:LipEf}\end{equation}\end{prop}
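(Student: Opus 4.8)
The monotonicity $\tau(K')\leq \tau(K)$ is immediate from the variational characterization of $\tau$ (the admissible set $H^1_0(\Om_{K'})$ embeds into $H^1_0(\Om_K)$ by extension by zero), so the content is the upper bound $\tau(K)-\tau(K')\leq C|K\setminus K'|$. The plan is to compare the torsion functions $u_K:=u_{\Om_K}$ and $u_{K'}:=u_{\Om_{K'}}$ directly. Recalling that $\tau(K)=\int_{\Om_K}u_K$ with $-\Delta u_K=1$, $u_K\in H^1_0(\Om_K)$, and $u_K\geq 0$ by the maximum principle, I would first write
\[
\tau(K)-\tau(K')=\int_{\Om_K}u_K-\int_{\Om_{K'}}u_{K'}.
\]
The natural route is to split this as $\int_{\Om_K}u_K-\int_{\Om_K}u_{K'}^{e}+\int_{\Om_K}u_{K'}^{e}-\int_{\Om_{K'}}u_{K'}$, where $u_{K'}^{e}$ is the extension of $u_{K'}$ by zero to $\Om_K$. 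The last two terms reduce to $\int_{\Om_K\setminus\Om_{K'}}u_{K'}^{e}=0$, so in fact $\tau(K)-\tau(K')=\int_{\Om_K}(u_K-u_{K'}^{e})$. Testing the equations for $u_K$ and $u_{K'}^e$ (the latter satisfies $-\Delta u_{K'}^e=1$ in $\Om_{K'}$, and is a subsolution in $\Om_K$ in the sense that $-\Delta u_{K'}^e\leq 1$ there, since it is superharmonic where it vanishes) against $u_K-u_{K'}^e\geq 0$, one gets $\int_{\Om_K}|\nabla(u_K-u_{K'}^e)|^2\leq \int_{\Om_K}(u_K-u_{K'}^e)$, hence by Poincaré on $\Om_K\subset D$, $\|u_K-u_{K'}^e\|_{L^2(\Om_K)}\leq C(D)$ and in particular a uniform $L^\infty$/$L^2$ bound on all torsion functions for bodies in $\K^N_{D',D}$ (this last bound also follows directly from the maximum principle comparing $u_K$ with the torsion function of a fixed ball containing $D$).

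With uniform bounds in hand, the efficient way to extract the $|K\setminus K'|$ rate is to avoid the full difference and instead estimate $\tau(K)-\tau(K')$ by a good competitor. Using $u_{K'}^e$ as a test function in the variational formula for $-2\tau(K')$ is the wrong direction; instead I would use $u_K\varphi$ (or a truncation) cut off to vanish on $\partial K'$... more cleanly: since $\tau(K)-\tau(K')=\int_{\Om_K}(u_K-u_{K'}^e)$ and $w:=u_K-u_{K'}^e\geq 0$ solves $-\Delta w=\chi_{\Om_K\setminus\Om_{K'}}$ in $\Om_{K'}$ and $-\Delta w=0$... (here one must be slightly careful with the measure-valued Laplacian across $\partial K'$, but $w$ is a nonnegative subsolution of $-\Delta w=\chi_{\Om_K\setminus\Om_{K'}}$ in $\Om_K$, since crossing $\partial\Om_{K'}$ from inside $u_{K'}^e$ picks up a nonnegative singular part for $-\Delta$). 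Then, testing against $w$ and against the torsion function $u_D$ of the big box $D$: by the weak maximum principle $0\leq w\leq$ (solution of $-\Delta z=\chi_{\Om_K\setminus\Om_{K'}}$ in $\Om_K$, $z\in H^1_0(\Om_K)$), and integrating,
\[
\tau(K)-\tau(K')=\int_{\Om_K}w\leq \int_{\Om_K}z=\int_{\Om_K}\chi_{\Om_K\setminus\Om_{K'}}\,u_K\leq \|u_K\|_{L^\infty(\Om_K)}\,|K\setminus K'|\,,
\]
where in the last equalities I used the symmetry of the Green operator, $\int z\cdot 1=\int \chi_{\Om_K\setminus\Om_{K'}}\cdot(-\Delta)^{-1}1=\int_{\Om_K\setminus\Om_{K'}}u_K$. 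The uniform sup bound $\|u_K\|_{L^\infty}\leq C(D)$ (maximum principle against a ball containing $D$) then gives the claim with $C=C(D)$, in fact independent of $D'$.

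The only genuine subtlety — and the step I expect to need the most care — is making rigorous the claim that $w=u_K-u_{K'}^e$ is dominated by the torsion-type potential of the symmetric difference, i.e. handling the singular part of $-\Delta u_{K'}^e$ along $\partial K'$ and justifying the weak maximum principle in the non-smooth (merely convex, hence Lipschitz) domains $\Om_K,\Om_{K'}$. This can be done cleanly by working with the obstacle/comparison formulation: $u_{K'}^e\in H^1_0(\Om_K)$ is a valid test function, $u_{K'}^e\leq u_K$ follows from the comparison principle (both are nonnegative, $u_K$ is a supersolution of the equation solved by $u_{K'}^e$ on $\Om_{K'}$ and $u_{K'}^e=0\leq u_K$ on $\Om_K\setminus\Om_{K'}$), and then $w\geq0$ solves $-\Delta w=1-(-\Delta u_{K'}^e)$ weakly in $\Om_K$ with right-hand side a nonnegative measure supported in $\overline{\Om_K\setminus\Om_{K'}}$ of total mass $\leq C(D)$; comparing $w$ with $(-\Delta)^{-1}_{\Om_K}\chi_{\Om_K\setminus\Om_{K'}}$ via the maximum principle closes the argument. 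All constants depend only on $D$ (through the Poincaré constant and the sup bound on torsion functions), which is stronger than what is claimed. Finally, the reduction of Theorem \ref{th:examples} for $\lambda_n$ to this proposition, and the Neumann case, are postponed to Section \ref{sect:Neumann} as indicated.
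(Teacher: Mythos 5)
Your setup is fine up to and including the identity $\tau(K)-\tau(K')=\int_{\Om_K}(u_K-u_{K'}^{e})$ and the fact that $w:=u_K-u_{K'}^{e}\ge 0$, but the key comparison $w\le z$ with $z:=(-\Delta)^{-1}_{\Om_K}\chi_{\Om_K\setminus\Om_{K'}}$ is false: the inequality goes the other way. Writing $\partial_\nu$ for the outward normal derivative on $\partial\Om_{K'}$, the distributional Laplacian of the zero-extension is
\[
-\Delta u_{K'}^{e}=\chi_{\Om_{K'}}-|\partial_\nu u_{K'}|\,d\H^{N-1}\ \text{ on }\partial\Om_{K'},
\]
whose singular part is \emph{nonpositive} (this is precisely why $u_{K'}^{e}$ is a subsolution, as you correctly note). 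Hence
\[
-\Delta w=\chi_{\Om_K\setminus\Om_{K'}}+|\partial_\nu u_{K'}|\,d\H^{N-1}\ \ge\ -\Delta z ,
\]
so the weak maximum principle gives $w\ge z$, not $w\le z$. By the Green symmetry you invoke, the exact identity is
\[
\tau(K)-\tau(K')=\int_{\Om_K\setminus\Om_{K'}}u_K+\int_{\partial\Om_{K'}}u_K\,|\partial_\nu u_{K'}|\,d\H^{N-1},
\]
and the boundary term your argument drops is in fact the dominant one. In dimension one with $\Om_K=(0,1)$, $\Om_{K'}=(0,1-\eps)$ one computes $\tau(K)-\tau(K')=\eps/4+O(\eps^2)$ while $\int_{\Om_K\setminus\Om_{K'}}u_K=\eps^2/4+O(\eps^3)$, so the inequality $\int_{\Om_K}w\le\int_{\Om_K}z$ on which your whole estimate rests fails at first order.

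To rescue this route you would have to control the boundary term, e.g. by $u_K(x)\le \|\nabla u_K\|_{L^\infty}\,d(x,\partial K)$ for $x\in\partial\Om_{K'}$ together with the uniform bound on $|\partial_\nu u_{K'}|$, and then prove the purely geometric inequality $\int_{\partial K'}d(x,\partial K)\,d\H^{N-1}(x)\le C(D',D)\,|K\setminus K'|$ for nested convex bodies containing $D'$. That estimate is not free (it degenerates without the inner body $D'$, so your claim that the constant depends only on $D$ would also need revisiting) and is essentially the geometric content of the paper's change-of-variables lemma (Lemma \ref{th:lemmaeig}): there one builds a bi-Lipschitz map $\phi:K'\to K$, with norms uniform over $\K^N_{D',D}$, which is the identity outside a set of measure at most $C|K\setminus K'|$, and one uses $u_K\circ\phi\in H^1_0(\Om_{K'})$ as a competitor in the variational formulation of $\tau(K')$; combined with the uniform $W^{1,\infty}$ bounds on $u_K$ (which you do have) this yields \eqref{eq:LipEf} while avoiding the singular measure on $\partial\Om_{K'}$ altogether. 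As written, your proof has a sign error at its central step and does not establish the proposition.
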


The proof of Proposition \ref{th:LipEf} is based on two preliminary lemmas: for convex bodies $K'\subset K$,\begin{enumerate}\item we construct a ``change of variable'' operator $T_{K,K'}:W^{1,\infty}(\Om_K)\rightarrow W^{1,\infty}(\Om_{K'})$ whose norm is uniformly bounded in $\K^N_{D',D}$, and which is the identity on a large part of $K'$, see Lemma \ref{th:lemmaeig} (recall that $\Om_{K}$ denotes the interior of $K$). \item we show uniform $W^{1,\infty}$-estimates of the torsion function of $\Om_K$, see Lemma \ref{th:lemmaEstEgf}. \end{enumerate}

\begin{lemma}[Change of variables]\label{th:lemmaeig}There exists $C=C(D',D)>0$ such that for any $K,K' \in \K^N_{D',D}$ with $K'\subset K$, there exists a  bi-Lipschitz homeomorphism $\phi:=\phi_{K',K}:\R^N\rightarrow\R^N$ such that the operator $T:=T_{K,K'}$ defined by
\begin{align}\nonumber 
T_{K,K'}:L^1(\Om_K)&\to L^1(\Om_{K'})\\f&\mapsto f\circ\phi
\nonumber\end{align} satisfies the requirements:
\begin{itemize} 
\item There exists $K''\subset K'$ such that $|K'\setminus K''|\leq C|K\setminus K'|$ and 
$$Tf(x)=f(x)\textrm{ a.e. in }K'',\textrm{ for any }f\in L^1(\Om_K)$$
\item For all $f_1$ and $f_2$ respectively in $H_0^1(\Om_K)$ and $W^{1,\infty}(\Om_K)$, $Tf_1$ and $Tf_2$ belongs to $H_0^1(\Om_{K'})$ and $W^{1,\infty}(\Om_{K'})$ respectively, with furthermore
\begin{align*}
 \|Tf_1\|_{H^1_{0}(\Om_{K'})}&\leq C\|f_1\|_{H^1_{0}(\Om_K)}\\
\|Tf_2\|_{W^{1,\infty}(\Om_{K'})}&\leq C\|f_2\|_{W^{1,\infty}(\Om_K)}
\end{align*}
\end{itemize}\end{lemma}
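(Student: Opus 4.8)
The plan is to build $\phi$ as a radial reparametrization of $\R^N$ centered at a fixed interior point of $D'$. After a translation we may assume $0\in\text{Int}(D')$, so that every $L\in\K^N_{D',D}$ has $0$ in its interior and is determined by its radial function $\rho_L(\theta):=\max\{t\geq0:\ t\theta\in L\}$, $\theta\in\mathbb{S}^{N-1}$. From $B_{r_0}\subset D'\subset L\subset D\subset B_R$ with $r_0:=\min_{\mathbb{S}^{N-1}}\rho_{D'}>0$ and $R:=\max_{\mathbb{S}^{N-1}}\rho_D$ one gets $r_0\leq\rho_L\leq R$, and moreover $\rho_L$ is Lipschitz on $\mathbb{S}^{N-1}$ with a constant depending only on $r_0,R$ (a classical estimate for convex bodies trapped between two concentric balls). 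I will use this uniform Lipschitz control on $\rho_K$ and $\rho_{K'}$ throughout.

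Given $K'\subset K$ in $\K^N_{D',D}$, I would set $\delta(\theta):=\min\{\rho_K(\theta)-\rho_{K'}(\theta),\ r_0/2\}\geq0$, which is Lipschitz on $\mathbb{S}^{N-1}$, and define for each $\theta$ the increasing piecewise-affine bijection $\psi_\theta:[0,\infty)\to[0,\infty)$ by
\[
\psi_\theta(r)=\begin{cases}
r,& 0\leq r\leq \rho_{K'}(\theta)-\delta(\theta),\\
\text{affine onto }[\rho_{K'}(\theta)-\delta(\theta),\,\rho_K(\theta)],& \rho_{K'}(\theta)-\delta(\theta)\leq r\leq\rho_{K'}(\theta),\\
r+\big(\rho_K(\theta)-\rho_{K'}(\theta)\big),& r\geq\rho_{K'}(\theta),
\end{cases}
\]
(with $\psi_\theta=\text{id}$ when $\delta(\theta)=0$), and put $\phi(r\theta):=\psi_\theta(r)\theta$, $\phi(0):=0$. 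Then $\phi$ is a homeomorphism of $\R^N$ sending $K'$ onto $K$ (hence $\partial K'$ onto $\partial K$ and $\Om_{K'}$ onto $\Om_K$), and $\phi=\text{id}$ on $K'':=\{r\theta:\ 0\leq r\leq\rho_{K'}(\theta)-\delta(\theta)\}\subset K'$. The middle affine piece of $\psi_\theta$ has slope $1+(\rho_K(\theta)-\rho_{K'}(\theta))/\delta(\theta)\leq 1+2R/r_0$ whenever $\delta(\theta)>0$, the slope of the last piece is $1$, and the only region where the factor $1/r$ appearing in $D\phi$ in polar coordinates is large — namely near $0$ — lies inside $K''$, where $\phi=\text{id}$; combining this with the uniform Lipschitz bounds on $\rho_K,\rho_{K'},\delta$ and the lower bound $\rho_{K'}-\delta\geq r_0/2$, a direct computation of $D\phi$ and $D\phi^{-1}$ shows that $\phi$ is bi-Lipschitz with constants depending only on $r_0$ and $R$, i.e. only on $D'$ and $D$.

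Once $\phi$ is built, the asserted properties of $T=T_{K,K'}$ follow by routine change of variables. Since $\phi(\Om_{K'})=\Om_K$ and $\phi$ preserves Lebesgue-null sets, $Tf=f\circ\phi$ is well defined on $L^1(\Om_{K'})$ with $\|Tf\|_{L^1(\Om_{K'})}\leq\|\det D\phi^{-1}\|_{L^\infty}\|f\|_{L^1(\Om_K)}$, and $Tf=f$ a.e. on $K''$ because $\phi=\text{id}$ there and $|K''\setminus\Om_{K'}|=0$. For $f\in W^{1,\infty}(\Om_K)$, $f\circ\phi$ is Lipschitz with $\|f\circ\phi\|_{L^\infty(\Om_{K'})}=\|f\|_{L^\infty(\Om_K)}$ and Lipschitz constant at most $\text{Lip}(\phi)\,\|f\|_{W^{1,\infty}(\Om_K)}$. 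For $f\in H^1_0(\Om_K)$, the chain rule $\nabla(f\circ\phi)=(D\phi)^{\top}(\nabla f)\circ\phi$ together with the change of variables gives $\|f\circ\phi\|_{H^1}\leq C\big(\|D\phi\|_{L^\infty}+1\big)\|\det D\phi^{-1}\|_{L^\infty}^{1/2}\|f\|_{H^1}$, and $f\circ\phi\in H^1_0(\Om_{K'})$ because $g\circ\phi$ has compact support in $\Om_{K'}$ for every $g\in C_c^\infty(\Om_K)$ and $T$ is continuous for the $H^1$ norm. Finally,
\[
|K'\setminus K''|=\tfrac1N\int_{\mathbb{S}^{N-1}}\big(\rho_{K'}^N-\rho_{K''}^N\big)\,d\theta\leq R^{N-1}\int_{\mathbb{S}^{N-1}}\delta\,d\theta\leq R^{N-1}\int_{\mathbb{S}^{N-1}}(\rho_K-\rho_{K'})\,d\theta\leq\Big(\tfrac{R}{r_0}\Big)^{N-1}|K\setminus K'|,
\]
using $\delta\leq\rho_K-\rho_{K'}$ and the reverse bound $|K\setminus K'|\geq r_0^{N-1}\int_{\mathbb{S}^{N-1}}(\rho_K-\rho_{K'})\,d\theta$.

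The heart of the matter is the second paragraph: exhibiting $\phi$ with a bi-Lipschitz constant genuinely uniform over the whole family $\K^N_{D',D}$, while simultaneously keeping $\phi$ equal to the identity on a set $K''$ large enough that $|K'\setminus K''|\lesssim|K\setminus K'|$ and correctly handling the directions where $K$ and $K'$ touch (there $\delta=0$ and the transition layer collapses). The truncation $\delta\leq r_0/2$ is exactly what reconciles these requirements with the slope bound; the remaining work is bookkeeping with radial functions.
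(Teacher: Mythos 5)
Your proposal is correct and follows essentially the same route as the paper: a radial reparametrization centered at an interior point of $D'$, equal to the identity on an inner core $K''$ and stretching a thin layer of $K'$ onto $K\setminus K''$, with uniform bi-Lipschitz bounds coming from the sandwich $B_{r_0}\subset D'\subset K'\subset K\subset D\subset B_R$ and the volume comparison done via the polar formula $|L|=\frac1N\int_{\partial B}\rho_L^N$. The only (immaterial) difference is the choice of transition layer: the paper takes its width proportional to $\rho_K-\rho_{K'}$ via $\rho_{K'}-\alpha=c(\rho_K-\rho_{K'})$ with a single constant $c(D',D)$, yielding one affine formula outside $K''$, whereas you truncate the width at $r_0/2$ and add a pure radial translation beyond $K'$; both yield the same uniform constants.
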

Note that this result is similar to \cite[Theorem 4.23]{BL2} but for a different class of sets, namely $\K^N_{D',D}$: it is unclear whether \cite[Theorem 4.23]{BL2} implies Lemma \ref{th:lemmaeig}, so we prefered to make our own proof of this result.

Let us recall that any $K\in \K^N$ has its boundary $\partial K$ naturally parametrized as a graph over the sphere. More precisely, we can assume up to translating that $0$ is contained in $\Om_{K}$, and then set $\rho(x):=\sup\{\lambda\geq0, \lambda x\in K\}$ for any $x\in \partial B$, called the radial function of $K$. Then the set $K$ is globally parametrized by $\rho$: \begin{equation}\label{eq:gauge} K=\left\{\lambda x\rho(x), x\in \partial B,\ \lambda\in[0,1]\right\}\end{equation}It is classical that $\rho\in W^{1,\infty}(\partial B)$ and moreover one can estimate $\nabla_{\tau}\rho$ in terms of $\rho$ \begin{equation}\label{eq:Nabdist} \|\nabla_{\tau} \rho\|_{L^{\infty}(\partial B)}\leq \frac{\left(\sup\rho\right)^2}{\inf\rho}\end{equation} (see for instance the computations leading to (3.13) in \cite{F}).

\noindent{\bf Proof of Lemma \ref{th:lemmaeig}:} 
We will assume up to translating that $0\in \Int(D)$. 
Let $K,K'\in \K^N_{D',D}$ with $K'\subset K$. The proof consists in building a bi-Lipschitz change of variables $\phi:K'\rightarrow K$ which is the identity on a large part of $K'$, and such that 
\begin{align}\|\phi\|_{W^{1,\infty}(\Om_{K'})}, \|\phi^{-1}\|_{W^{1,\infty}(\Om_K)}\leq C \label{eq:bilipW1} \end{align} 
for some constant $C=C(D',D)>0$ independent of $K$ and $K'$.

Let $\rho$ and $\rho'$ denote respectively the radial functions of $K$ and $K'$. 
Let $\alpha$ be defined over $\partial B$ by the relation \begin{equation} \label{eq:defalpha} \rho'-\alpha=c(\rho-\rho')\end{equation} for some $c>0$ that will be chosen later. 
Then $\alpha\leq \rho'$ and we get the estimate
$$\alpha = ((c+1)\rho'-c\rho)\geq(c+1)\inr(D')-c\;\diam(D)$$
where $\inr(D')$ is the inradius of $D'$. For $c=c(D',D)=\frac{\inr(D')}{2[\diam(D)-\inr(D')]}$ we get  \begin{equation}\label{eq:lowalpha}\alpha\geq \frac{\inr(D')}{2}\end{equation} which is a lower bound independent of $K,K'\in \K^N_{D',D}$.

 If $u\in \R^N\setminus\{0\}$ we denote by $x_u=u/|u|$. Let $\phi$ be defined over  $\R^N$ by the formulae
 \begin{equation*}\forall u\in \R^N\setminus\{0\}, \
\phi(u):=\begin{cases} \phi_1(u):=x_u\left(\frac{c+1}{c}|u|-\frac{\alpha(x_u)}{c}\right), \text{ if } |u|\geq \alpha(x_u)  \\ \phi_2(u):=u, \text{ if }|u|<\alpha(x_u)\end{cases}\end{equation*}
and $\phi(0):=0$. Observe that the function $\phi$ is continuous and increasing along any normal direction $x\in \partial B$, and it verifies by construction that \begin{equation}\nonumber \phi(0)=0,\ \phi(u)=\rho(x_u)x_u \text{ if } |u|=\rho'(x_u)\end{equation} This ensures that $\phi$ is a bijection from $K'$ to $K$. 

Define $K''$ as the (non necessarily convex) set on which $\phi$ is the identity, {\it i.e.} \[K'':=\{ \lambda x\alpha(x), \ x\in \partial B, \ 0\leq \lambda\leq 1\}\]
The mapping $u\in \R^N\setminus K''\mapsto x_u$ having Lipschitz constant $ 2(\min_{\R^N\setminus K''}|u|)^{-1}=2(\min\alpha)^{-1}$, then \[u\in \R^N\setminus K''\mapsto x_u\alpha(x_u)\] has Lipschitz constant only depending on $\min\alpha$ and $\|\nabla_\tau\alpha\|_{L^{\infty}(\partial B)}$. From the definition \eqref{eq:defalpha} of $\alpha$ and recalling \eqref{eq:Nabdist}, we deduce that $\phi_1$ has Lipschitz constant $L$ only depending on $c$, $\inr(D')$ and $\diam(D)$, hence only on $\inr(D')$ and $\diam(D)$. Now, $\phi_2$ is Lipschitz over $\R^N$ (with Lipschitz constant $1$), so that we deduce that $\phi$ is globally Lipschitz over $\R^N$. Indeed: let $u_0\in K''$, $u_1\in \R^N\setminus K''$ and pick $t\in[0,1]$ such that the point $u_t:=(1-t)u_0+tu_1$ verifies $u_t\in \partial K''$ with $[u_t,u_1]\subset \R^N\setminus K''$. Since $\phi(u_t)=\phi_1(u_t)=\phi_2(u_t)$ we can write \begin{align}\nonumber|\phi(u_0)-\phi(u_1)|&=|\phi_2(u_0)-\phi_2(u_t)+\phi_1(u_t)-\phi_1(u_1)|\\\nonumber &\leq 1\times|u_0-u_t|+L|u_t-u_1|\\&\leq \max(1,L)|u_0-u_1|\nonumber\end{align} Hence $\phi$ is globally Lipschitz and its Lipschitz constant only depends on $D'$ and $D$. The same arguments can be applied to $\phi^{-1}$, thus getting \eqref{eq:bilipW1}.  Let us now show that the operator defined by \begin{equation} \forall f\in L^1_{\text{loc}}(\Om_K),  \ Tf:=f\circ\phi \label{eq:defT} \end{equation}
satisfies the expected requirements. For $p\in\{2,\infty\}$, any $f\in W^{1,p}(\Om_K)$ satifies $f\circ\phi\in W^{1,p}(\Om_{K'})$ and the weak derivatives of $f\circ\phi$ can be expressed with the classical formula for the derivative of a composition (see for instance \cite[Theorem 1.1.7]{Ma}); furthermore, if $f\in H^1_0(\Om_K)$ then $f\circ \phi$ verifies $f\circ \phi=0$ a.e. outside $\Om_{K'}$, giving that $Tf\in H^1_0(\Om_{K'})$ since $\Om_{K'}$ is Lipschitz  (see \cite[3.2.16]{HP}). Together with \eqref{eq:bilipW1} we deduce that $T$ satisfies the second requirement.

By construction $\phi(u)=u$ if $u\in K''$, so that it only remains to show \begin{equation} \label{eq:volcontrol} |K'\setminus K''|\leq C |K\setminus K'|\end{equation}with $C$ uniform in the class $\K^N_{D',D}$.

It is classical that $|K|=\frac{1}{N}\int_{\partial B}\rho^N d\H^{N-1}$ (see for instance \cite[(1.53)]{Sc}) and similarly for $K'$ and $K''$ with $\rho'$ and $\alpha$ in place of $\rho$, respectively. Therefore
\begin{equation} \label{eq:twoVol} |K'\setminus K''|=\frac{1}{N}\int_{\partial B}\left(\rho'^N-\alpha^N\right)d\H^{N-1}, \  |K\setminus K'|= \frac{1}{N}\int_{\partial B}\left(\rho^N-\rho'^N\right)d\H^{N-1}\end{equation}Using the identity \begin{equation}x^N-y^N=(x-y)\sum_{k=0}^{N-1}x^ky^{N-1-k}\nonumber\end{equation}we obtain: \begin{equation*} |K'\setminus K''|\leq N(\diam(D))^{N-1}\int_{\partial B}\left(\rho'-\alpha\right) d\H^{N-1}=cN(\diam(D))^{N-1}\int_{\partial B}\left(\rho-\rho'\right) d\H^{N-1}\nonumber\end{equation*} recalling \eqref{eq:defalpha}. Likewise we get \begin{equation}\nonumber |K\setminus K'|\geq \left(N\frac{\inr(D')^{N-1}}{2^{N-1}}\right)\int_{\partial B}\left(\rho-\rho'\right) d\H^{N-1} \end{equation} recalling \eqref{eq:lowalpha}. 
This proves \eqref{eq:volcontrol} for some $C=C(N,D,D')$ and completes the proof. \qed

Next lemma is a control of the torsion function $\| u_{\Om_K}\|_{W^{1,\infty}(\Om_K)}$ uniformly in the class $\K^N_{D',D}$:

\begin{lemma} \label{th:lemmaEstEgf} Let $D \in \K^N$. There exists $C=C(D)>0$ such that for all $K\in \K^N$ with $K\subset D$, then\begin{equation}\nonumber \| u_{K}\|_{W^{1,\infty}(\Om_K)}\leq C\end{equation}
where $u_{K}=u_{\Om_{K}}$ is the torsion function defined in \eqref{eq:torsion}.\end{lemma}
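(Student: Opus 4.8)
The plan is to separately bound $\|u_K\|_{L^\infty(\Om_K)}$ and $\|\nabla u_K\|_{L^\infty(\Om_K)}$ by constants depending only on $D$. For the $L^\infty$ bound, I would use the maximum principle: by monotonicity of the torsion function with respect to domain inclusion (which follows from the variational characterization, or from comparison of solutions of $-\Delta u=1$), $K\subset D$ gives $0\le u_K\le u_D$ in $\Om_K$, and $u_D$ is a fixed bounded function since $D$ is a fixed convex body. Hence $\|u_K\|_{L^\infty(\Om_K)}\le \|u_D\|_{L^\infty(\Om_D)}=:C_0(D)$.

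For the gradient bound, the key point is that $u_K$ is concave on $\Om_K$ when $K$ is convex: this is a classical fact (the torsion function of a convex domain has a concave — indeed $\sqrt{u_K}$ is concave, but concavity of $u_K$ itself suffices here — profile, see e.g. Kennington or Korevaar). For a nonnegative concave function on a convex open set, the gradient is controlled pointwise by the $L^\infty$ norm divided by the distance to the boundary; more usefully, since $u_K=0$ on $\partial K$, concavity gives that along any segment from a boundary point into $\Om_K$ the function grows at most linearly with a slope controlled by $\|u_K\|_{L^\infty}$ over the inradius. Quantitatively, for $x\in\Om_K$ and a unit vector $e$, concavity of $t\mapsto u_K(x+te)$ together with $u_K\ge 0$ and $u_K=0$ at the boundary point where the ray exits $K$ yields $|\partial_e u_K(x)|\le \|u_K\|_{L^\infty(\Om_K)}/\dist(x,\partial K)$ near the boundary and a global bound once one notes that the gradient of a concave function is largest near $\partial K$. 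The cleanest route: a nonnegative concave function $v$ on a convex set vanishing on the boundary satisfies $\mathrm{Lip}(v)\le 2\sup v/\rho$ where $\rho$ is the inradius — apply this with $v=u_K$. Since $K\subset D$ we can compare against a fixed reference: actually the inradius of $K$ can be arbitrarily small, so instead I would argue directly that for any $x$, picking the chord of $K$ through $x$ in direction $\pm e$ of lengths $a,b>0$ with $a+b\le\diam(D)$, concavity plus $u_K\ge 0$ at both chord endpoints forces $|\partial_e u_K(x)|\le u_K(x)(1/a+1/b)$; combined with a slightly more careful barrier argument comparing $u_K$ with the torsion function of a ball of fixed radius tangent from inside, or simply with the bound $u_K(x)\le C_0(D)$ and the observation that the worst case for $|\nabla u_K|$ occurs at $\partial K$ where a boundary-gradient estimate for $-\Delta u=1$ on convex domains contained in $D$ gives a uniform bound (e.g. via the barrier $w(x)=\frac{1}{2N}(\mathrm{diam}(D)^2-|x-x_0|^2)$ centered appropriately, or Lemma-type estimates as in standard elliptic regularity on convex domains).

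I expect the main obstacle to be the uniformity of the gradient bound up to the boundary when the inradius of $K$ degenerates: concavity gives interior control but the Lipschitz constant a priori blows up like $1/\inr(K)$. The resolution is that concavity of $u_K$ is in fact stronger than needed — one should use that $u_K$ is concave \emph{and} its gradient is monotone, so the maximal slope is attained at the boundary, where the exterior barrier argument (valid since $K\subset D$ and $K$ convex, so $K$ satisfies a uniform exterior cone/ball condition relative only to $\diam(D)$) gives $|\nabla u_K|\le C(D)$ on $\partial K$, hence everywhere by concavity. Thus the final bound is $\|u_K\|_{W^{1,\infty}(\Om_K)}\le C(D)$ with $C(D)=C_0(D)+C_1(D)$, completing the proof.
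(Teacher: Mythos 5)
Your $L^\infty$ bound is fine (comparison with $u_D$ via the maximum principle is a legitimate alternative to the paper's explicit quadratic barrier $w(x)=\frac{1}{2N}(\diam(\Om_K)^2-|x-x_0|^2)$). The gradient bound, however, rests on a false premise: the torsion function of a convex body is \emph{not} concave in general. The Korevaar--Kennington theorem gives concavity of $\sqrt{u_K}$ only, and this does not imply concavity of $u_K$ (squaring a nonnegative concave function destroys concavity near its zero set). A concrete counterexample is the equilateral triangle, whose torsion function is a constant multiple of $\ell_1\ell_2\ell_3$ with $\ell_i$ the affine functions vanishing on the sides: near a vertex the Hessian of $\ell_1\ell_2$ has a positive eigenvalue, so $u_K$ is not concave there. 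Everything you hang on concavity --- the pointwise bound $|\partial_e u_K(x)|\le u_K(x)(1/a+1/b)$ propagating to a global Lipschitz bound, the claim that the gradient is monotone so the maximal slope is attained at the boundary, and the final step ``hence everywhere by concavity'' --- therefore has no foundation as written.

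What the paper actually uses (following \cite{CF}) is precisely the weaker, correct property: by $1/2$-concavity the level set $A:=\{u_K>u_K(x_0)\}$ is convex, so it admits a supporting hyperplane at $x_0$ and is trapped in a slab $\{0\le x_N\le d\}$ with $d=\diam(\Om_K)$; the barrier $w(x)=\tfrac12 x_N(d-x_N)+u_K(x_0)$ dominates $u_K$ on $A$ by the maximum principle, and since $|\nabla w|\le d/2$ one gets $\sup_x (u_K(x)-u_K(x_0))/|x-x_0|\le d/2$ for every $x_0$, hence $\|\nabla u_K\|_{L^\infty}\le \diam(D)/2$ with no dependence on $\inr(K)$. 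Your boundary-barrier idea can be rescued without concavity, but you must supply the correct mechanism for propagating the boundary estimate inward: either observe that $|\nabla u_K|^2$ is subharmonic (since $\Delta|\nabla u_K|^2=2|D^2u_K|^2\ge0$ for $-\Delta u_K=1$) so its supremum is controlled by boundary values, or combine the slab bound $u_K(x)\le \frac{d}{2}\,\dist(x,\partial K)$ with interior gradient estimates on balls of radius $\dist(x,\partial K)/2$. As it stands, the key step of your gradient estimate is a gap, not a gloss.
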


\begin{proof}
\begin{itemize} \item {\bf $L^{\infty}$ estimate of $u_K$:} 
We apply a standard maximum principle argument.  We note first that $u_K\in C^0(\overline{\Om})$ (see \cite[Theorem 6.13]{GT}). We choose $x_0\in\Om_K$ and let \begin{equation}\nonumber w(x):=\frac{1}{2N}\big(\text{diam}(\Om_K)^2-|x-x_0|^2\big)\end{equation} The construction of $w$ ensures\begin{equation}\nonumber\left\{\begin{array}{ccccc} -\Delta w&=&-\Delta u_K &\text{ in }& \Om_K \\[2mm] w&\geq &u_K &\text{ over } &\partial \Om_K\end{array}\right.\end{equation} 
The maximum principle then writes \begin{equation}\nonumber 0\leq u_K\leq w\leq \frac{\text{diam}(\Om_K)^2}{2N}\text{ in } \Om\end{equation} 
so that \begin{equation}\label{eq:estDLinfty} \|u_K\|_{L^{\infty}(\Om_K)}\leq \frac{\text{diam}(\Om_K)^2}{2N}\end{equation} \item {\bf $L^{\infty}$ estimate of $\nabla u_K$:}  This is obtained in \cite[Lemma 1]{CF} ; we reproduce the proof for sake of completeness. 

Since $\Om_K$ is convex the corresponding torsion function $u_K$ is $1/2$-concave (see \cite[Theorem 4.1]{Ke}), yielding also that the level sets $\{u_K> c\}$ are convex for any $c\in \R$. Let $x_0\in \Om_K$ and take a supporting hyperplane $H$ to the convex set $A:=\{u_K> u_K(x_0)\}$, which we can assume to be $H= \{x_N=0\}$ without loss of generality. The convexity of $A$ ensures that A is located on one side of $H$, say that $A\subset \{x_N\geq0\}$. As $x_0\in H$ we must also have $A\subset\{x_N\leq d\}$ where $d:=\text{diam}(\Om_K)$. Hence \begin{equation}A\subset  \{0\leq x_N\leq d\}\label{eq:barreerF}\end{equation} This construction provides a natural barrier to $u_K$ at $x_0$. In fact, denoting by $F:=\{0<x_N<d\}$, we let $w:F\rightarrow \R$  be defined for $x\in F$ by, \begin{equation}w(x)=\frac{1}{2}x_N(d-x_N)+u_K(x_0)\label{eq:expr_w}\end{equation} Then see that $w$ verifies \begin{equation} \begin{cases}\begin{aligned} -\Delta &w=1& &\text{ in } F\\ &w=u_K(x_0)& &\text{ over } \partial F\end{aligned}\end{cases}\nonumber\end{equation} Furthermore it holds \begin{align}\forall x\in F,\ |\nabla w(x)|=|\partial_Nw(x)|\leq \frac{d}{2}\label{eq:gradw}\end{align}We can now estimate the gradient of $u_K$ at $x_0$. Noting that $u_K\in C^0(\overline{\Om})$ (see \cite[Theorem 6.13]{GT}), we have thanks to \eqref{eq:barreerF} and \eqref{eq:expr_w} that $w\geq c\geq u_K$ over $\partial A\subset\{u_K\geq u_K(x_0)\}$. We can therefore apply the maximum principle in the open set $A$ to get \begin{align} \nonumber\sup_{x\in \Om}\frac{u_K(x)-u_K(x_0)}{|x-x_0|}&\leq \sup_{x\in A}\frac{u_K(x)-u_K(x_0)}{|x-x_0|} \\ & \leq \sup_{x\in A}\frac{w(x)-w(x_0)}{|x-x_0|} \nonumber\\&\leq \sup_{x\in F}\frac{w(x)-w(x_0)}{|x-x_0|}\nonumber\end{align} Using \eqref{eq:gradw} we finally obtain\begin{equation}\sup_{x\in \Om}\frac{u_K(x)-u_K(x_0)}{|x-x_0|}\leq  \frac{d}{2},\nonumber\end{equation} that is \begin{equation} \|\nabla u_K\|_{L^{\infty}(\Om)}=\sup_{x,y\in \Om}\frac{|u_K(x)-u_K(y)|}{|x-y|}\leq \frac{1}{2}\text{diam}(\Om)\nonumber\end{equation}

\end{itemize}\end{proof} 

\begin{remark}We did not use the convexity of $\Om_K$ for estimating $\|u_K\|_{L^{\infty}(\Om_K)}$ in terms of diam$(\Om_K)$, so that estimate \eqref{eq:estDLinfty} holds for any bounded open set $\Om$. Moreover, one can also obtain a finer estimate relying on a symmetrization argument due to G. Talenti: if $v$ is the solution to \begin{equation}\nonumber \begin{cases} -\Delta v=1 \text{ in } \Om_K^{\sharp} \\ v\in H^1_0(\Om_K^{\sharp})\end{cases}\end{equation} where $\Om_K^{\sharp}$ is the ball centered at the origin having the same volume than $\Om_K$, then \cite[Theorem 1 (iv)]{Ta} implies \begin{equation} u_K^{\sharp}\leq v \text{ in } \Om_K^{\sharp}\nonumber \end{equation} with $u_K^{\sharp}$ denoting the symmetric decreasing rearrangement of $u_K$. This provides a Volume-type control of $\|u_K\|_{L^{\infty}(\Om_K)}$: \begin{equation}\nonumber \|u_K\|_{L^{\infty}(\Om_K)}=\|u_K^{\sharp}\|_{L^{\infty}(\Om_K^{\sharp})}\leq \|v\|_{L^{\infty}(\Om_K^{\sharp})}\leq C(N)|\Om_K|^{2/N}\end{equation} with $C(N)$ a dimensional constant\footnote{This was pointed out to us by D. Bucur.}.  This estimate implies \eqref{eq:estDLinfty} up to a dimensional multiplicative constant, as $\Om_K$ can always be included in some cube of size $\text{diam}(\Om_K)$. It can reveal to be very convenient for controlling $\|u_K\|_{L^{\infty}(\Om_K)}$  when we can bound $|\Om_K|$ while having $\text{diam}(\Om_K)\longrightarrow+\infty$. \end{remark}

We are now in a position to prove Proposition \ref{th:LipEf}:

\noindent{\bf Proof of Proposition \ref{th:LipEf}:} Let $K'\subset K$ be convex bodies. Recall that $\Om_K$ and $\Om_{K'}$ denote the interiors of $K$ and $K'$ respectively. Let $T:\Om_K\rightarrow \Om_{K'}$ the change of variables given by Lemma \ref{th:lemmaeig}. Let $u_K:=u_{\Om_K}$ the torsion function, solution of \eqref{eq:torsion}. Using $Tu_K\in H^1_0(\Om_{K'})$ as a test function in the variational formulation of $\tau(K')$ we can write \begin{align}\nonumber 0\leq \tau(K)-\tau(K')
\leq \int_{K}\Big(|\nabla (Tu_K)|^2-|\nabla u_K|^2\Big)-2\int_{K}(Tu_K-u_K)\nonumber\end{align} Using the properties of $T$ given by Lemma \ref{th:lemmaeig} we get\begin{align}\nonumber 0\leq \tau(K)-\tau(K')&\leq  \int_{K\setminus K''}(|\nabla (Tu_K)|^2-|\nabla u_K|^2)-2\int_{K\setminus K''}(Tu_K-u_K)\nonumber\\ &\leq |K\setminus K''|\Big(\|\nabla(Tu_K)\|^2_{L^{\infty}(\Om_{K'})}+2\|Tu_K\|_{L^{\infty}(\Om_{K'})}+ 2\|u_K\|_{L^{\infty}(\Om_K)}\Big)\nonumber\\&\leq C \max\left(\|u_K\|_{L^{\infty}(\Om_K)},\|\nabla u_K\|_{L^{\infty}(\Om_K)}^{2}\right)|K\setminus K'|\nonumber\end{align} for some $C=C(D',D)$. Lemma \ref{th:lemmaEstEgf} then yields the result.
\qed

We will now show Theorem \ref{th:examples} for $\lambda_{n}$. To that end we will use the following result that was proved in \cite[Theorem 3.4]{B03Reg}:
\begin{theorem}\label{th:Eig_Ef}Let $\Om'\subset\Om$ be bounded open sets. For any $n\in \N^*$ it holds\begin{equation}\nonumber\left|\lambda_n(\Om)-\lambda_n(\Om')\right|\leq  C(n)\lambda_n(\Om)^{N/2+1}\lambda_n(\Om')|\tau(\Om)-\tau(\Om')|\end{equation}where $ C(n)= 2n^2e^{1/4\pi}$.
\end{theorem}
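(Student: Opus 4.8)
This inequality is quoted from \cite[Theorem 3.4]{B03Reg}, but the approach I would take is the following. Since $\Om'\subset\Om$, domain monotonicity of Dirichlet eigenvalues gives $\lambda_n(\Om)\leq\lambda_n(\Om')$, so it is enough to bound $\lambda_n(\Om')-\lambda_n(\Om)\geq0$. I would pass to the inverse Dirichlet Laplacians: let $H_\Om,H_{\Om'}:L^2(\Om)\to L^2(\Om)$ be the solution operators (a function on $\Om'$ being extended by zero, so that $H_{\Om'}$ really acts on $L^2(\Om)$), which are positive, compact and self-adjoint, with non-zero eigenvalues $1/\lambda_k(\Om)$ and $1/\lambda_k(\Om')$ respectively. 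Their integral kernels are the Green functions, and here the inclusion $\Om'\subset\Om$ is used crucially: the maximum principle applied to $G_\Om(\cdot,y)-G_{\Om'}(\cdot,y)$ (harmonic in $\Om'$ with nonnegative boundary trace, and equal to $G_\Om(\cdot,y)\geq0$ off $\Om'$) gives $G_\Om\geq G_{\Om'}\geq0$, hence $R:=H_\Om-H_{\Om'}$ has a nonnegative symmetric kernel whose row sums recover the torsion functions, so that $\int_\Om\int_\Om(G_\Om-G_{\Om'})\,dx\,dy=\int_\Om(u_{\Om}-u_{\Om'})\,dx=\tau(\Om)-\tau(\Om')$.

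Next I would use min-max. Let $u_1,\ldots,u_n$ be $L^2(\Om)$-orthonormal eigenfunctions of $-\Delta$ on $\Om$ associated with $\lambda_1(\Om)\leq\cdots\leq\lambda_n(\Om)$, and set $W:=\mathrm{span}(u_1,\ldots,u_n)$, an $n$-dimensional subspace of $L^2(\Om)$. Testing the variational characterization of the $n$-th largest eigenvalue $1/\lambda_n(\Om')$ of $H_{\Om'}$ against $W$: for $u=\sum_{i=1}^nc_iu_i$ with $\sum_ic_i^2=1$,
\[\langle H_{\Om'}u,u\rangle=\langle H_\Om u,u\rangle-\langle Ru,u\rangle=\sum_{i=1}^n\frac{c_i^2}{\lambda_i(\Om)}-\langle Ru,u\rangle\geq\frac{1}{\lambda_n(\Om)}-\langle Ru,u\rangle,\]
whence $\tfrac1{\lambda_n(\Om')}\geq\tfrac1{\lambda_n(\Om)}-\max_{u\in W,\,\|u\|_{L^2}=1}\langle Ru,u\rangle$, that is
\[\lambda_n(\Om')-\lambda_n(\Om)\leq\lambda_n(\Om)\,\lambda_n(\Om')\max_{u\in W,\,\|u\|_{L^2}=1}\langle Ru,u\rangle.\]

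It then remains to estimate $\langle Ru,u\rangle$ for normalized $u\in W$. Using the sign of the kernel, $\langle Ru,u\rangle=\int_\Om\int_\Om(G_\Om-G_{\Om'})(x,y)u(x)u(y)\,dx\,dy\leq\|u\|_{L^\infty(\Om)}^2\big(\tau(\Om)-\tau(\Om')\big)$; and by Cauchy--Schwarz $\|u\|_{L^\infty(\Om)}^2=\big\|\sum_ic_iu_i\big\|_{L^\infty}^2\leq\sup_{x}\sum_{i=1}^nu_i(x)^2$. The spectral function is controlled by the on-diagonal estimate for the Dirichlet heat kernel $\sum_je^{-t\lambda_j(\Om)}u_j(x)^2\leq(4\pi t)^{-N/2}$ (comparison with the Euclidean heat kernel); choosing $t$ of order $1/\lambda_n(\Om)$ yields $\sup_x\sum_{i=1}^nu_i(x)^2\leq C(n,N)\lambda_n(\Om)^{N/2}$, and plugging this back gives exactly the announced inequality (the explicit constant $2n^2e^{1/4\pi}$ being obtained by tracking constants in \cite{B03Reg}; the cruder route bounding each $\|u_i\|_{L^\infty}$ separately produces the $n^2$ factor).

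The main obstacle is the quantitative $L^\infty$-bound on eigenfunctions with an explicit constant: everything else is soft, but making the constant explicit requires the sharp form of ultracontractivity (equivalently a sharp Nash-type inequality) for the Dirichlet Laplacian. A secondary, purely technical point is to justify that $H_{\Om'}$, $u_{\Om'}$ and $G_{\Om'}$ are well-defined for a merely bounded open set $\Om'$ (Sobolev/capacity theory) and that the comparison $G_\Om\geq G_{\Om'}$ survives this low regularity; both are standard once $H^1_0(\Om')$ is interpreted via capacity.
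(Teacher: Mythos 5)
The paper does not actually prove this statement: it is quoted from \cite[Theorem 3.4]{B03Reg}, and your reconstruction follows precisely the strategy of that reference --- ordering of the Green kernels $G_{\Om}\geq G_{\Om'}\geq 0$ via the maximum principle, the identity $\int_{\Om}\int_{\Om}(G_{\Om}-G_{\Om'})\,dx\,dy=\tau(\Om)-\tau(\Om')$, the min--max principle for the resolvents tested on the span of the first $n$ Dirichlet eigenfunctions of $\Om$, and an ultracontractivity (heat-kernel) bound $\sup_x\sum_{i\leq n}u_i(x)^2\lesssim\lambda_n(\Om)^{N/2}$ to control the $L^\infty$ norm on that span. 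The argument is correct as outlined, including the reduction $|\lambda_n(\Om)-\lambda_n(\Om')|=\lambda_n(\Om')-\lambda_n(\Om)$ by domain monotonicity; the only loose end is the explicit constant $2n^2e^{1/4\pi}$, which you rightly defer to constant-tracking in the cited proof.
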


\noindent{\bf Proof of Theorem \ref{th:examples} for $\lambda_{n}$:} Theorem \ref{th:Eig_Ef} gives that  for any $K\in \K^N_{D',D}$\begin{equation*}\left|\lambda_n(K)-\lambda_n(K')\right|\leq  C(n)\lambda_n(K)^{N/2+1}\lambda_n(K')| \tau(K)-\tau(K')|\nonumber
\end{equation*}
From monotonicity of the Dirichlet eigenvalues we have $\lambda_{n}(K)^{N/2+1}\lambda_{n}(K')\leq \lambda_{n}(D')^{2+N/2}$, and therefore the estimate from Proposition \ref{th:LipEf} gives the result. \qed

\subsubsection{Neumann eigenvalues}\label{sect:Neumann}
The purpose of this section is to prove Theorem \ref{th:examples} in the case of Neumann eigenvalues. We actually get a slightly better result (with no assumption of inclusion between $K$ and $K'$, see also Remark \ref{rk:improvment}), more precisely:

\begin{prop} \label{th:LipeigN} Let $N\geq2$. Let $D'\subset D \subset \R^N$ be convex bodies. For any $n\in \N^*$ there exists $C_n=C_n(D',D)>0$ such that for each $K, K'\in\K^N_{D',D}$\begin{equation}|\mu_n(K)- \mu_n(K')|\leq C_n|K\Delta K'|\label{eq:LipeigN_final}\end{equation}\end{prop}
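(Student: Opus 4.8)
The plan is to run the variational argument for $\mu_n$, transplanting the first $n$ Neumann eigenfunctions of one body onto the other by a change of variables, exactly in the spirit of the proof of Proposition~\ref{th:LipEf}; the two ingredients to add, compared with the torsion case, are a reduction to the case of nested bodies and a uniform $W^{1,\infty}$-estimate for Neumann eigenfunctions over $\K^N_{D',D}$ (playing the role of Lemma~\ref{th:lemmaEstEgf}). Throughout I use the min--max characterisation $\mu_n(\Om_K)=\min\big\{\max_{0\neq u\in V}\tfrac{\int_{\Om_K}|\nabla u|^2}{\int_{\Om_K}u^2}:\ V\subset H^1(\Om_K),\ \dim V=n\big\}$ and the a priori bound $\mu_n(K)\leq\lambda_n(K)\leq\lambda_n(D')$, valid on the whole class.

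First I would reduce to the case $K'\subset K$: since $K\cap K'$ is a convex body lying in $\K^N_{D',D}$ and contained in both $K$ and $K'$, once the estimate is known for nested bodies we get $|\mu_n(K)-\mu_n(K')|\leq|\mu_n(K)-\mu_n(K\cap K')|+|\mu_n(K\cap K')-\mu_n(K')|\leq C(|K\setminus K'|+|K'\setminus K|)=C|K\Delta K'|$. We may moreover assume $|K\Delta K'|$ is as small as needed, since otherwise $|\mu_n(K)-\mu_n(K')|\leq\lambda_n(D')$ is already controlled by a constant times $|K\Delta K'|$.

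So let $K'\subset K$ in $\K^N_{D',D}$, let $u_1,\dots,u_n$ be $L^2(\Om_K)$-orthonormal Neumann eigenfunctions of $\Om_K$ with eigenvalues $\mu_1(K)\leq\dots\leq\mu_n(K)$, and let $\phi=\phi_{K',K}$, $T=T_{K,K'}$ and $K''\subset K'$ be as in Lemma~\ref{th:lemmaeig}; thus $|K'\setminus K''|\leq C|K\setminus K'|$ (hence $|K\setminus K''|\leq C|K\setminus K'|$ as well), $T$ is the identity on $K''$, $\phi(\Om_{K'}\setminus K'')=\Om_K\setminus K''$ up to a null set, and $T$ maps $H^1(\Om_K)$ into $H^1(\Om_{K'})$ with norm $\leq C(D',D)$ (the proof of Lemma~\ref{th:lemmaeig} establishes the $W^{1,2}$-bound, not only the $H^1_0$ one). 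Take $V:=\mathrm{span}(Tu_1,\dots,Tu_n)$ as competitor for $\mu_n(K')$. For $u=\sum c_iu_i$ and $v=Tu=u\circ\phi$, since $v=u$ on $K''$ a change of variables gives $\int_{\Om_{K'}}|\nabla v|^2\leq\int_{\Om_K}|\nabla u|^2+C\int_{\Om_K\setminus K''}|\nabla u|^2$ and $\int_{\Om_{K'}}v^2\geq\int_{\Om_K}u^2-\int_{\Om_K\setminus K''}u^2$. Now $\int_{\Om_K}|\nabla u|^2=\sum c_i^2\mu_i(K)\leq\mu_n(K)\|u\|_{L^2(\Om_K)}^2$, while both $\|\nabla u\|_{L^\infty(\Om_K)}^2$ and $\|u\|_{L^\infty(\Om_K)}^2$ are $\leq C\sum c_i^2=C\|u\|_{L^2(\Om_K)}^2$ by the uniform $W^{1,\infty}$-bound; combining with $|K\setminus K''|\leq C|K\setminus K'|$ yields, for $|K\setminus K'|$ small, $\int_{\Om_{K'}}|\nabla v|^2/\int_{\Om_{K'}}v^2\leq(\mu_n(K)+C|K\setminus K'|)/(1-C|K\setminus K'|)\leq\mu_n(K)+C'|K\setminus K'|$. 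The same estimates show the Gram matrix of $(Tu_i)$ equals $\mathrm{Id}+O(|K\setminus K'|)$, so $\dim V=n$, and min--max gives $\mu_n(K')\leq\mu_n(K)+C'|K\setminus K'|$. The reverse inequality follows by the symmetric argument, using $g\mapsto g\circ\phi^{-1}$ (again the identity on $K''$ and bounded $H^1(\Om_{K'})\to H^1(\Om_K)$, since $\phi^{-1}$ is bi-Lipschitz with the same constants) and the uniform eigenfunction bound for $\Om_{K'}$; this proves \eqref{eq:LipeigN_final}.

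I expect the main obstacle to be the uniform estimate $\max_{1\leq k\leq n}\|u_k\|_{W^{1,\infty}(\Om_K)}\leq C(D',D,n)$ for $L^2$-normalised Neumann eigenfunctions. The $L^\infty$-bound is standard De Giorgi--Nash--Moser for the Neumann problem, uniform because the bodies of $\K^N_{D',D}$ satisfy a uniform cone condition (Remark~\ref{th:rk_eps}) and $\mu_k(K)\leq\lambda_k(D')$. The gradient bound is the delicate point, since Neumann eigenfunctions have no concavity analogous to the $\tfrac12$-concavity of the torsion function exploited in Lemma~\ref{th:lemmaEstEgf}; instead one uses the Bochner formula, which gives that $w:=|\nabla u_k|^2$ satisfies $-\Delta w\leq 2\mu_k(K)w$ in $\Om_K$, together with $\partial_\nu w=-2\,\mathrm{II}(\nabla u_k,\nabla u_k)\leq 0$ on $\partial\Om_K$ (here $\nabla u_k$ is tangential because $\partial_\nu u_k=0$, and the second fundamental form of a convex body is nonnegative). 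A Moser iteration for this favourable Neumann subsolution then bounds $\sup_{\Om_K}w$ by $C(D',D,n)\,\frac{1}{|\Om_K|}\int_{\Om_K}w=C\mu_k(K)/|\Om_K|\leq C(D',D,n)$, uniformly over the class; alternatively this may be quoted from the literature on gradient bounds for Neumann eigenfunctions on convex sets. Everything else is a transplantation argument parallel to the proof of Proposition~\ref{th:LipEf}.
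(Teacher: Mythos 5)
Your proposal is correct and its skeleton coincides with the paper's: min--max for $\mu_n$, transplantation of the first $n$ eigenfunctions by an operator that is the identity on a large subset and bounded on $H^1$, and a uniform $W^{1,\infty}$ bound for Neumann eigenfunctions over $\K^N_{D',D}$. Two implementation choices differ. First, the paper does not reduce to nested bodies: it builds an extension operator $\Pi_K:W^{1,\infty}(\Om_K)\to W^{1,\infty}(\R^N)$ with norm uniform over the class (Lemma \ref{th:extN}) and takes $T=R_{K'}\circ\Pi_K$, which treats arbitrary pairs $K,K'$ symmetrically in one stroke; you instead pass through $K\cap K'$ and reuse the composition operator of Lemma \ref{th:lemmaeig}. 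Your route is legitimate --- $K\cap K'\in\K^N_{D',D}$ and the proof (though not the statement) of Lemma \ref{th:lemmaeig} does give the $W^{1,2}\to W^{1,2}$ bound you need --- but the extension operator is what lets the paper state the Lipschitz estimate directly for $|K\Delta K'|$ without the triangle-inequality detour. Second, for the gradient bound on eigenfunctions (Lemma \ref{th:lemmaEstEgN}) the paper quotes an estimate of the form $\|\nabla v_n\|_{L^\infty}\le C|K|^{1/N}\mu_n(K)C_{\Om_K}^{-1}\|v_n\|_{L^\infty}$ together with a uniform lower bound on the relative isoperimetric constant $C_{\Om_K}$ coming from the uniform $\eps$-cone condition; your Bochner--Moser alternative ($-\Delta|\nabla u|^2\le 2\mu_n|\nabla u|^2$ with $\partial_\nu|\nabla u|^2\le 0$ by convexity) is a classical and valid substitute, but note that for a general convex body the boundary is only Lipschitz, so the sign of $\mathrm{II}(\nabla u,\nabla u)$ must be obtained by approximating with smooth convex bodies and passing to the limit, and the Moser constant must be checked to depend only on $D',D$ (again via the uniform cone condition). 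With those two caveats made precise, your argument is a complete alternative proof.
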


\begin{remark}
This result is stronger than \cite[Theorem 4.2]{Ro}, proven in the same class but for a different distance between sets (namely, the $L^\infty$ distance between the radial functions). More precisely, it is proven in \cite{Ro} that there exists $C_n=C_n(D',D)$ such that for all $K', K\in\K^N_{D',D}$ with respective radial functions $\rho_K$ and $\rho_{K'}$ \[|\mu_n(K)- \mu_n(K')|\leq C_n\|\rho_K-\rho_{K'}\|_{\infty}.\] This latter result is not enough to provide \eqref{eq:LipeigN_final}, as one sees for example by taking $K=[0,1]^3$ and $K_i=K\cap\{x\in\R^3, x_3>1/i-x_{1}\}$ ($K_i$ is built by cutting the neighborhood of an edge). It is not hard to see that $|K\setminus K_i|\leq C\|\rho_{K}-\rho_{K_{i}}\|_{\infty}^2$ (we fixed an origin inside $K$, for example $(\frac12,\frac12,\frac12)$), thus contradicting the possibility of controlling $\|\rho_K-\rho_{K'}\|_{\infty}$ by $|K\Delta K'|$. On the other hand this result is implied by \eqref{eq:LipeigN_final}, recalling the expression of volume in terms of radial functions (see for instance \cite[(1.53)]{Sc}) \[|K\Delta K'|=\frac{1}{N}\int_{\partial B}|\rho_K^N-\rho_{K'}^N|\leq C\|\rho_K-\rho_{K'}\|_{\infty}\] with $C$ a constant only depending on $D'$ and $D$.
\end{remark}

As in the Dirichlet case we follow the general strategy of \cite{BL1,BL2}, as we were not able to apply their result, namely \cite[Theorem 6.11]{BL1}. The proof of Proposition \ref{th:LipeigN} relies on the two independent steps: \begin{enumerate}\item we construct an extension operator $\Pi_K:W^{1,\infty}(\Om_K)\rightarrow W^{1,\infty}(\R^N)$ whose norm uniformly bounded in $\K^N_{D',D}$.\item we provide $W^{1,\infty}$-estimates of Neumann eigenfunctions.\end{enumerate}
Note that  unlike in the Dirichlet case, we cannot rely on a statement like Theorem \ref{th:Eig_Ef} and we have to directly work with the variational formulation of eigenvalues (we could actually apply the same strategy for proving Theorem \ref{th:examples} for $\lambda_{n}$, though we thought it was more elegant to use Theorem \ref{th:Eig_Ef}).

The following lemma deals with the first item of this strategy:

\begin{lemma}[Extension operator]\label{th:extN}Let $N\geq2$ and $D'\subset D\subset \R^N$ be convex bodies. There exists $C=C(D',D)>0$ such that for any $K \in \K^N_{D',D}$ there exists a bounded operator \begin{equation}\nonumber \Pi_K:L^1(\Om_K)\rightarrow L^1(\R^N)\end{equation} satisfying the requirements:\begin{itemize} \item for any $f\in L^1(\Om_K)$, $\Pi_Kf(x)=f(x)$ for $a.e.\;x$ in $\Om_{K}$,  \item if $f\in W^{1,\infty}(\Om_K)$ then $\Pi_Kf\in W^{1,\infty}(\R^N)$ with \begin{equation} \nonumber \|\Pi_Kf\|_{W^{1,\infty}(\R^N)}\leq C\|f\|_{W^{1,\infty}(\Om_K)}\end{equation}\end{itemize}\end{lemma}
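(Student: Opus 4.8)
\textbf{Proof strategy for Lemma~\ref{th:extN}.}
The plan is to build $\Pi_K$ as a \emph{radial reflection across $\partial K$ followed by a radial cut-off}, in the same spirit as the change of variables of Lemma~\ref{th:lemmaeig}, but going outward from $K$ instead of inward. After a translation we may assume that the incenter of $D'$ is the origin, so that $B(0,\inr(D'))\subset D'\subset K\subset D$. Then the radial function $\rho:=\rho_K$ of $K$ (see \eqref{eq:gauge}) satisfies $\inr(D')\le\rho\le\diam(D)$ on $\partial B$, and by \eqref{eq:Nabdist} it holds $\|\nabla_\tau\rho\|_{L^\infty(\partial B)}\le\diam(D)^2/\inr(D')$; crucially, all these bounds are \emph{uniform over $\K^N_{D',D}$}. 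Writing $x_u:=u/|u|$ for $u\neq0$ and fixing $\delta:=\inr(D')/2$, I would introduce, on the closed ``collar'' $\mathcal{C}_K:=\{u:|u|\le\rho(x_u)+\delta\}$, the map
\[
\psi_K(u):=\begin{cases} u,& |u|\le\rho(x_u),\\ \bigl(2\rho(x_u)-|u|\bigr)x_u,& \rho(x_u)\le|u|\le\rho(x_u)+\delta,\end{cases}\qquad \psi_K(0):=0 .
\]
The two formulas agree on $\partial K$, so $\psi_K$ is continuous; it is the identity on $K$, it sends $\mathcal{C}_K$ onto $K$, and it restricts to a genuine bi-Lipschitz homeomorphism of the shell $\{\rho(x_u)\le|u|\le\rho(x_u)+\delta\}$ onto the inner shell $\{\rho(x_u)-\delta\le|u|\le\rho(x_u)\}\subset K$ (indeed $r\mapsto 2\rho(x)-r$ is an isometry in the radial variable, and $\psi_K$ is its own inverse there). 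With a radial Lipschitz cut-off $\zeta$ satisfying $\zeta\equiv1$ on $\{|u|\le\rho(x_u)+\delta/2\}$ and $\zeta\equiv0$ on $\{|u|\ge\rho(x_u)+\delta\}$, I set $\Pi_Kf:=\zeta\,(f\circ\psi_K)$ on $\mathcal{C}_K$ and $\Pi_Kf:=0$ on $\R^N\setminus\mathcal{C}_K$. This is linear in $f$, and since $\zeta,\psi_K$ depend only on $K$ and $\zeta\equiv1$, $\psi_K=\mathrm{id}$ on $\Om_K$, one has $\Pi_Kf=f$ a.e.\ on $\Om_K$.

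Next I would establish the two quantitative bounds. For $L^1\to L^1$: $\Pi_Kf$ is supported in $\mathcal{C}_K$; it equals $f$ on $K$, while on the shell the bi-Lipschitz change of variables $v=\psi_K(u)$ (with constants depending only on $\inr(D'),\diam(D)$ and $\|\nabla_\tau\rho\|_{L^\infty}$, hence only on $D',D$) gives $\int_{\text{shell}}|f\circ\psi_K|\le C(D',D)\int_{\Om_K}|f|$, whence $\|\Pi_Kf\|_{L^1(\R^N)}\le C(D',D)\|f\|_{L^1(\Om_K)}$. For $W^{1,\infty}\to W^{1,\infty}$: since $\Om_K$ is convex, $f\in W^{1,\infty}(\Om_K)$ is $\|\nabla f\|_{L^\infty(\Om_K)}$-Lipschitz and extends as such to $K$; the maps $u\mapsto x_u$ and $u\mapsto\rho(x_u)$ are Lipschitz on $\{|u|\ge\inr(D')\}$ with constants controlled by $D',D$ only, so $\psi_K|_{\mathcal{C}_K}$ and $\zeta$ are Lipschitz with constants $C(D',D)$. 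Hence $\zeta\,(f\circ\psi_K)$ is Lipschitz on $\mathcal{C}_K$ with constant $\le C(D',D)\|f\|_{W^{1,\infty}(\Om_K)}$, it vanishes on $\{|u|\ge\rho(x_u)+\delta\}$, and it is continuous across the Lipschitz hypersurface $\{|u|=\rho(x_u)+\delta\}$; gluing across this interface — which I would check by estimating $\Pi_Kf$ along the rays through $0$ and along the spheres centered at $0$ separately, and using $\inr(D')\le\rho\le\diam(D)$ to compare spherical and Euclidean distances — yields $\Pi_Kf\in W^{1,\infty}(\R^N)$ with $\|\Pi_Kf\|_{W^{1,\infty}(\R^N)}\le C(D',D)\|f\|_{W^{1,\infty}(\Om_K)}$.

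The main obstacle is precisely the \emph{uniformity of all constants over $\K^N_{D',D}$}: every estimate above rests on the two facts that, for $K\in\K^N_{D',D}$, the radial function $\rho_K$ stays in a fixed compact subinterval of $(0,\infty)$ and has a uniformly bounded tangential gradient — that is, \eqref{eq:Nabdist} together with $\inr(D')\le\rho_K\le\diam(D)$ — which is what forces $\psi_K$, $\psi_K^{-1}$ and $\zeta$ to be Lipschitz with $K$-independent constants; once these uniform radial bounds are in place, the remainder is bookkeeping (the ray/sphere gluing being the most tedious part). A secondary but genuine technical point, needed for $\Pi_K$ to be defined on all of $L^1(\Om_K)$ rather than only on Lipschitz functions, is that $\psi_K$ must be bi-Lipschitz on the collar, which is why the outer part of the collar is cut off to $0$ by $\zeta$ instead of being pushed into $K$ by a collapsing retraction such as the nearest-point projection onto $K$.
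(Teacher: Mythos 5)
Your construction is essentially the paper's: both work in radial coordinates around a point of $\Int(D')$, extend $f$ to a collar $\{|u|\le\rho(x_u)+\delta\}$ by composing with a Lipschitz map of the collar into $K$, multiply by a radial cut-off vanishing on the outer boundary, and derive all Lipschitz constants uniformly over $\K^N_{D',D}$ from $\inr(D')\le\rho\le\diam(D)$ together with the bound \eqref{eq:Nabdist} on $\nabla_\tau\rho$ — exactly the two uniform facts you identify as the crux. The one genuine difference is the map into $K$: the paper uses the collapsing radial retraction $\phi_1(u)=\rho(x_u)x_u$ onto $\partial K$ outside $K$, whereas you use the bi-Lipschitz radial reflection $u\mapsto(2\rho(x_u)-|u|)x_u$ of the outer shell onto an inner shell. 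Your variant is slightly more careful on a point the paper glosses over: with the collapsing retraction, $f\circ\phi_1$ evaluates $f$ on the null set $\partial K$, so the operator is only unambiguously defined on functions with a continuous representative (harmless here, since only the $W^{1,\infty}$ case is used downstream), while your reflection makes $\Pi_K$ genuinely well defined and bounded on all of $L^1(\Om_K)$ via the change of variables. Both versions yield the stated $W^{1,\infty}$ bound by the same gluing-along-segments argument.
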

\begin{remark}This result could be seen as a consequence of \cite[Theorem II.1]{Ch2} which asserts the same result in the wider class of sets satisfying the $\eps-$cone condition (see Definition \ref{th:defeps}). Nevertheless, using that the domains we consider are convex, we are able to give a shorter proof of this result.\end{remark}

\begin{proof} The ideas are similar from the ones in the proof of Lemma \ref{th:lemmaeig}. We again assume up to translating that $0\in \Int(D')$, and let $\rho$ be the radial function associated to $K$. 

If $u\in \R^N\setminus\{0\}$ we set $x_u:=u/|u|$. We let \begin{equation} \widetilde{K}:=\left\{\lambda x(\rho(x)+1),\ x\in B,\ \lambda\in[0,1]\right\} \nonumber\end{equation}\begin{equation}\forall u\in \R^N,\ \phi_1(u):= \begin{cases} u &\text{ if } u\in K\\ \rho(x_u)x_u &\text{  if }u\notin K \end{cases}\nonumber\end{equation} and \begin{equation}\forall u\in \R^N,\ \phi_2(u):= \begin{cases} 1 &\text{ if } u\in K\\ \rho(x_u)-|u|+1 &\text{ if } |u|\in[\rho(x_u),\rho(x_u)+1]\\ 0 & \text{ if }|u|>\rho(x_{u})+1\end{cases}\nonumber\end{equation}  Functions $\phi_1$ and $\phi_2$ are built by gluing continuously Lipschitz functions; as in the proof of Lemma \ref{th:lemmaeig} we thus deduce that $\phi_1$ and $\phi_2$ are Lipschitz with Lipschitz constant only depending on $\|\nabla_{\tau}\rho\|_{L^{\infty}(B)}$ and $\min\rho$, hence only on $D'$ and $D$. Therefore there exists $C(D',D)$ such that \begin{equation}\label{eq:Neum_Lip}\begin{cases} \|\phi_2\|_{L^{\infty}(\R^N)}\leq1,\  \|\nabla\phi_2\|_{L^{\infty}(\R^N)}\leq  C(D',D)\\ \|D \phi_1\|_{L^{\infty}(\R^N)}\leq  C(D',D)\end{cases}\end{equation} We finally let, for any $f\in L^1(\Om_K)$ and $u\in\R^N$\begin{equation} \nonumber \Pi_Kf(u):=\begin{cases} f(\phi_1(u))\phi_2(u) &\text{ if } u\in \widetilde{K}\\ 0 & \text{ if }u\notin \widetilde{K}\end{cases}\end{equation} By construction $\Pi_Kf(u)=f(u)$ for $u\in \Om_K$, and $\|\Pi_Kf\|_{L^{\infty}(\R^N)}\leq \|f\|_{L^{\infty}(\Om_K)}$.  Since $\phi_1$ and $\phi_2$ are Lipschitz we have that $\Pi_Kf\in W^{1,\infty}(\R^N)$ if $f\in W^{1,\infty}(\Om_K)$ and further $\nabla \Pi_Kf=(f\circ\phi_1)\nabla \phi_2+\nabla (f\circ\phi_1)\phi_2$ a.e..  Using \eqref{eq:Neum_Lip} we deduce \begin{align}\nonumber \|\nabla \Pi_Kf\|_{L^{\infty}(\R^N)}= \|\nabla \Pi_Kf\|_{L^{\infty}(\widetilde{K})}&\leq \|\phi_2\nabla (f\circ\phi_1)\|_{L^{\infty}(\widetilde{K})}+\|(f\circ\phi_1)\nabla \phi_2\|_{L^{\infty}(\widetilde{K})}\\ &\leq \|\phi_2\|_{W^{1,\infty}(\R^N)}\big(\|\nabla(f\circ\phi_1)\|_{L^{\infty}(\widetilde{K})}+\|f\circ\phi_1\|_{L^{\infty}(\widetilde{K})}\big)\nonumber \\ &\leq  (1+C(D',D))^2\|f\|_{W^{1,\infty}(\Om_K)}\nonumber\end{align} This completes the proof of the lemma.\end{proof}

We now state a $W^{1,\infty}$-estimate  for Neumann eigenfunctions:

\begin{lemma} \label{th:lemmaEstEgN} Let $N\geq2$, $D'\subset D \subset \R^N$ be convex bodies, and $n\in\N^*$. There exists $C_n=C_n(N,D',D)>0$ such that for all $K\in \K_{D',D}^N$, \begin{equation}\| v_{K,n}\|_{W^{1,\infty}(\Om_K)}\leq C_n\nonumber\end{equation}
where $v_{K,n}$ is any Neumann eigenfunction associated to $\mu_n(K)$ and such that $\|v_{K,n}\|_{L^2(\Om_K)}=1$. 
\end{lemma}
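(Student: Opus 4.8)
The plan is to reduce the $W^{1,\infty}$-bound to an $L^\infty$-bound on $v_{K,n}$ and a Lipschitz bound on $v_{K,n}$, each obtained uniformly over $\K^N_{D',D}$. First I would bound $\|v_{K,n}\|_{L^\infty(\Om_K)}$: by the variational characterization $\mu_n(K)\le\mu_n(D')$ (recall $D'\subset K$, so a large class of test functions survives; alternatively use $\mu_n(K)\le\lambda_n(K)\le\lambda_n(D')$), the Rayleigh quotient of $v_{K,n}$ is uniformly bounded, so $\|\nabla v_{K,n}\|_{L^2(\Om_K)}^2=\mu_n(K)\le C_n$. Since all $K\in\K^N_{D',D}$ satisfy a uniform $\eps$-cone condition (as $D'\subset K\subset D$), they admit a uniform Sobolev/Moser–De Giorgi estimate: the eigenfunction equation $-\Delta v_{K,n}=\mu_n(K)v_{K,n}$ with Neumann condition on a Lipschitz domain with uniform constants gives $\|v_{K,n}\|_{L^\infty(\Om_K)}\le C_n$ by the standard iteration, using $\|v_{K,n}\|_{L^2}=1$ and $\mu_n(K)\le C_n$. (One may also invoke that the extension operator $\Pi_K$ of Lemma \ref{th:extN} produces a $W^{1,2}(\R^N)$ function supported in a fixed bounded set, reducing to an interior estimate on a ball.)

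Next I would bound the Lipschitz constant of $v_{K,n}$. The key geometric input is convexity: for a Neumann eigenfunction on a convex domain one has interior gradient estimates that degenerate only near the boundary, but on a convex domain the Neumann problem has no boundary gradient blow-up — more precisely, by the reflection/barrier argument or by known results on convex domains (e.g. the Payne–Weinberger type bounds or Li–Yau/Bakry gradient estimates specialized to the Neumann Laplacian on convex bodies), the eigenfunction $v_{K,n}$ satisfies $\|\nabla v_{K,n}\|_{L^\infty(\Om_K)}\le C(\mu_n(K),\diam(\Om_K))\|v_{K,n}\|_{L^\infty(\Om_K)}$. Since $\diam(\Om_K)\le\diam(D)$, $\mu_n(K)\le C_n$, and $\|v_{K,n}\|_{L^\infty(\Om_K)}\le C_n$ by the previous step, this yields the desired uniform gradient bound. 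Concretely I would: write $v:=v_{K,n}$, use local elliptic regularity (Schauder or $W^{2,p}$ interior estimates for $-\Delta v=\mu_n(K)v$) to get $C^{1,\alpha}_{\text{loc}}$ control away from $\partial\Om_K$ with constants depending only on $\mu_n(K)$ and $\|v\|_{L^\infty}$, then handle a boundary neighborhood using that $\partial K$ is convex: near a boundary point one can use the convex barrier exactly as in the proof of Lemma \ref{th:lemmaEstEgf} for the torsion function, or invoke that on convex domains the Neumann eigenfunctions are Lipschitz up to the boundary with a modulus controlled by $\diam$ and $\mu_n$ (this is classical, see the literature on convex domains). Combining, $\|v_{K,n}\|_{W^{1,\infty}(\Om_K)}\le C_n(N,D',D)$.

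The main obstacle I anticipate is the boundary gradient estimate: unlike the Dirichlet torsion function (where the level-set convexity and the explicit one-dimensional barrier $w$ in Lemma \ref{th:lemmaEstEgf} do the job cleanly), a Neumann eigenfunction is not sign-definite and its gradient is not controlled by a simple maximum principle on superlevel sets. The resolution is to use that on a convex domain the normal derivative vanishes on $\partial K$ and to combine interior $C^{1,\alpha}$ estimates with a reflection argument across the (locally nearly flat, since convex) boundary, or to quote a known uniform-in-the-class Lipschitz bound for Neumann eigenfunctions on convex bodies of bounded diameter and bounded inradius from below; the uniformity over $\K^N_{D',D}$ then comes precisely from $\inr(D')\le\inr(K)$ and $\diam(K)\le\diam(D)$ together with the spectral bound $\mu_n(K)\le\mu_n(D')$.
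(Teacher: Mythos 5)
Your two-step decomposition (uniform $L^\infty$ bound for $v_{K,n}$, then uniform gradient bound, with the uniformity extracted from $\mu_n(K)\leq\lambda_n(K)\leq\lambda_n(D')$, $\diam(K)\leq\diam(D)$ and the uniform $\eps$-cone condition enjoyed by all $K\in\K^N_{D',D}$) is exactly the paper's. One small correction on the spectral bound: Neumann eigenvalues are \emph{not} monotone under inclusion, so the claim $\mu_n(K)\leq\mu_n(D')$ "because $D'\subset K$" is false as stated; your fallback $\mu_n(K)\leq\lambda_n(K)\leq\lambda_n(D')$ is the correct (and the paper's) argument. For the $L^\infty$ step, your Moser--De Giorgi iteration with Sobolev constants uniform over the class is in substance what the paper gets by quoting \cite[Proposition 3.1]{Ro}, which yields $\|v_n\|_{L^\infty(\Om_K)}\leq C_1\big((1+\sqrt{\mu_n(K)})C_2\big)^r$ with $C_2=C_2(D',D)$; this part is fine.

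The gradient step is where your proposal is not yet a proof. The paper does not use any convexity-specific gradient estimate here: it quotes \cite{Ma2}, which bounds $\|\nabla v_n\|_{L^\infty(\Om_K)}\leq C(N)|K|^{1/N}\mu_n(K)C_{\Om_K}^{-1}\|v_n\|_{L^\infty(\Om_K)}$ in terms of the relative isoperimetric constant $C_{\Om_K}$, and then bounds $C_{\Om_K}$ from below uniformly over $\K^N_{D',D}$ via \cite[Corollary 2]{Th} and the uniform $\eps$-cone condition. Of the mechanisms you offer, two do not work: the torsion-style barrier of Lemma \ref{th:lemmaEstEgf} does not transfer (as you yourself observe, $v_{K,n}$ is not sign-definite and its level sets are not convex), and the reflection argument rests on a false premise --- the boundary of a convex body is Lipschitz but by no means "locally nearly flat" (consider a vertex of a polytope), and reflecting across a genuinely Lipschitz boundary does not preserve the equation in a way that yields Lipschitz bounds; for general Lipschitz domains Neumann eigenfunctions need not be Lipschitz up to the boundary at all. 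Your remaining route --- a Bakry-type semigroup gradient estimate $\|\nabla e^{t\Delta}f\|_{L^\infty}\leq Ct^{-1/2}\|f\|_{L^\infty}$ for the Neumann semigroup on convex domains, applied with $t\sim\mu_n(K)^{-1}$ --- is a legitimate alternative that would give a bound depending only on $N$, $\mu_n(K)$ and $\|v_{K,n}\|_{L^\infty}$, but you would need an actual statement valid on non-smooth convex bodies (typically obtained by approximating by smooth convex domains) rather than a generic appeal to "the literature on convex domains". As written, the boundary gradient estimate is the gap; the paper closes it with the isoperimetric-constant route rather than with convexity.
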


\begin{proof} We fix $n\in \N$ and denote more simply $v_n:=v_{K,n}$. \begin{itemize}\item {\bf $L^{\infty}$ estimate of $v_n$:}  By \cite[Proposition 3.1]{Ro} (and the remark following), it holds \begin{equation}\nonumber \|v_n\|_{L^{\infty}(\Om_K)}\leq C_1\big((1+\sqrt{\mu_k(K)})C_2\big)^r\end{equation} where \begin{equation}\begin{cases} C_1=C_1(N)\\C_2=C_2(D',D)\\ r=r(N)\end{cases}\nonumber\end{equation} Since $\mu_n(K)\leq \lambda_n(K)\leq \lambda_n(D')$, we get the estimate \begin{equation}\|v_n\|_{L^{\infty}(\Om_K)}\leq C_n(N,D',D)\label{eq:LinftyN}\end{equation}\item {\bf $L^{\infty}$ estimate of $\nabla v_n$:} It is proved in \cite{Ma2} in any dimension $N\geq2$ that \begin{equation}\nonumber\|\nabla v_n\|_{L^{\infty}(\Om_K)}\leq C(N)|K|^{1/N}\mu_n(K)C_{\Om_K}^{-1}\|v_n\|_{L^{\infty}(\Om_K)}\end{equation} where $C_{\Om_K}$ is the isoperimetric constant relative to $\Om_K$, {\it i.e.} if $\Om\subset\R^N$ is a bounded open set\begin{equation} C_{\Om}:=\inf_{\substack{E\subset \Om \\ 0<|E|\leq |\Om|/2}}\frac{P(E,\Om)}{|E|^{1-1/N}}=\inf_{\substack{E\subset \Om\\ 0<|E|< |\Om|}}\frac{P(E,\Om)}{\min\{|E|,|\Om\setminus E|\}^{\frac{N-1}N}}\nonumber\end{equation}with $P(\cdot,\Om)$ denoting the relative perimeter in $\Om$. Together with \eqref{eq:LinftyN} this provides \begin{equation}\label{eq:LinftynabN}\|\nabla v_n\|_{L^{\infty}(\Om_K)}\leq C_n(N,D',D)C_{\Om_K}^{-1}\end{equation} Now it is shown in \cite[Corollary 2]{Th} that $C_{\Om}\geq\delta(\eps,b,N)>0$ for any Lipschitz domain $\Om$ with diameter $\leq b$ and satisfying the $\eps$-cone condition \eqref{eq:epscone}. As there exists $\eps=\eps(D',D)$ such that \eqref{eq:epscone} is satisfied for any $K\in \K^N_{D',D}$ (see Remark \ref{th:rk_eps}) we can conclude from \eqref{eq:LinftynabN} that \begin{equation}\|\nabla v_n\|_{L^{\infty}(\Om_K)}\leq C_n(N,D',D)\nonumber\end{equation}
\end{itemize} \end{proof}

We are now in a position to prove Proposition \ref{th:LipeigN}. The following is a combination of the proofs of Theorems 3.2 and 4.20 of \cite{BL1} adapted to the particular case of the Neumann Laplace operator, which we reproduce for the convenience of the reader.\\ 

{\noindent\bf Proof of Proposition \ref{th:LipeigN}:} 
We denote by $v_1,...,v_n$ $n$ first eigenfunctions of the Neumann Laplace operator on $\Om_K$ normalized for the $L^2$ norm ({\it i.e.} $\|v_k\|_{L^2(\Om_K)}=1$).  Recall that functions $v_k$ are orthogonal for the $L^2$ scalar product. Let $f:=\sum_{k=1}^n\alpha_kv_k$ with $\|f\|_{L^2(\Om_K)}=1$, which means $\sum_{k=1}^n\alpha_k^2=1$. 
Set 
\begin{equation}T:=R_{K'}\circ \Pi_K:L^1(\Om_K)\rightarrow L^1(\Om_{K'})\nonumber\end{equation}
 where $\Pi_K$ is the extension operator given by Lemma \ref{th:extN} and $R_{K'}$ is the restriction onto $\Om_{K'}$. Note first that it suffices to prove estimate \eqref{eq:LipeigN_final} under the additionnal condition $|K\Delta K'|\leq \eps_n$ for some small $\eps_n(D',D)>0$ depending on $n$, $D'$, $D$; indeed, using $\mu_n(K)\leq \lambda_n(K)\leq \lambda_n(D')$ for any $K\in \K^N_{D',D}$, we have \begin{equation}\nonumber |\mu_n(K)-\mu_n(K')|\leq2\lambda_n(D')\leq \frac{2\lambda_n(D')}{\eps_n}|K\Delta K'|\end{equation} if $|K\Delta K'|\geq\eps_n$.\\\\ {\bf Estimate from below of $\|Tf\|_{L^2(\Om_{K'})}$:} We will first prove \begin{equation}\|Tf\|_{L^2(\Om_{K'})}^2\geq1-C_n|\Om_K\setminus\Om_{K'}|\label{eq:LipeigN_L2f}\end{equation} for some $C_n=C_n(N,D',D)>0$, which immediately provides \begin{equation}\|Tf\|_{L^2(\Om_{K'})}^{-2}\leq 1+2C_n|\Om_K\setminus\Om_{K'}|\label{eq:LipeigN_below}\end{equation} whenever $|\Om_K\setminus\Om_{K'}|\leq1/2C_n$, using the inequality $(1-x)^{-1}\leq 1+2x$ if $0\leq x\leq1/2$.
 
  We have, as $R_{K'}\circ \Pi_{K}(f)=f$ on $\Om_{K}\cap \Om_{K'}$,
   \begin{align}\|Tf\|^2_{L^2(\Om_{K'})}\geq\|Tf\|^2_{L^2(\Om_K\cap\Om_{K'})}&=\|f\|^2_{L^2(\Om_K\cap\Om_{K'})}= \|f\|^2_{L^2(\Om_K)}-\|f\|^2_{L^2(\Om_K\setminus\Om_{K'})} \label{eq:LipeigN_L2f1}\end{align} But thanks to Lemma \ref{th:lemmaEstEgN} and recalling that $\sum_{k=1}^n\alpha_k^2=1$\begin{align}\nonumber\|f\|^2_{L^2(\Om_K\setminus\Om_{K'})}&\leq\Big(\sum_{k=1}^n\alpha_k\|v_k\|_{L^2(\Om_K\setminus\Om_{K'})}\Big)^2\leq \sum_{k=1}^n\|v_k\|^2_{L^2(\Om_K\setminus\Om_{K'})}\\&\leq \Big(\sum_{k=1}^n\|v_k\|^2_{L^{\infty}(\Om_K)}\Big) |\Om_K\setminus\Om_{K'}| \leq C_n(N,D',D) |\Om_K\setminus\Om_{K'}|\nonumber\end{align}
   Pluging this into \eqref{eq:LipeigN_L2f1} yields \eqref{eq:LipeigN_L2f}.\\
  
  \noindent {\bf Estimate from above of $\|\nabla Tf\|_{L^2(\Om_{K'})}$:} We now prove \begin{equation}\label{eq:LipeigN_above}  \|\nabla Tf\|^2_{L^2(\Om_{K'})}\leq \mu_n(K)+C_n|\Om_{K'}\setminus\Om_K|\end{equation}for some $C_n=C_n(N,D',D)$.
  
  Note that\begin{align}\nonumber \|\nabla Tf\|^2_{L^2(\Om_{K'})}&= \|\nabla Tf\|^2_{L^2(\Om_{K'}\cap\Om_K)}+ \|\nabla Tf\|^2_{L^2(\Om_{K'}\setminus\Om_K)}\\&= \|\nabla f\|^2_{L^2(\Om_{K'}\cap\Om_K)}+\|\nabla Tf\|^2_{L^2(\Om_{K'}\setminus\Om_K)}\nonumber\\&\leq  \|\nabla f\|^2_{L^2(\Om_K)}+ \|\nabla Tf\|^2_{L^2(\Om_{K'}\setminus\Om_K)}\label{eq:LipeigN_HTf}\end{align}  The Neumann eigenfunctions being orthogonal for the $L^2$ scalar product, we also have $\int_{\Om_K}\nabla v_k\cdot\nabla v_{k'}=\mu_k(K)\int_{\Om_K} v_kv_{k'}=0$ for $k\ne k'$. Furthemore $\|\nabla v_k\|_{L^2(\Om_K)}^2=\mu_k(K)\leq\mu_n(K)$ and we thus get \begin{equation} \label{eq:LipeigN_Hf} \|\nabla f\|^2_{L^2(\Om_K)}=\sum_{k=1}^n\alpha_k^2\|\nabla v_k\|^2_{L^2(\Om_K)}\leq \mu_n(K)\end{equation} On the other hand, denoting by $ C_{1}=C_{1}(D',D)$ and $C_{2}=C_{2}(N,D',D)$ the constants respectively given by Lemmas \ref{th:extN} and \ref{th:lemmaEstEgN}, we obtain \begin{align}\nonumber \|\nabla Tf\|^2_{L^2(\Om_{K'}\setminus\Om_K)}&\leq \|\nabla Tf\|^2_{L^{\infty}(\Om_{K'})}|\Om_{K'}\setminus\Om_K|\nonumber\\&\leq C_{1}\|f\|^2_{W^{1,\infty}(\Om_K)}|\Om_{K'}\setminus\Om_K|\nonumber\\&\leq C_{1}\Big(\sum_{k=1}^n\|v_k\|^2_{W^{1,\infty}(\Om_K)}\Big)|\Om_{K'}\setminus\Om_K|\nonumber\\&\leq C_{1}nC_{2}^2|\Om_{K'}\setminus\Om_K|\label{eq:LipeigN_Hf2}\end{align} With estimates \eqref{eq:LipeigN_Hf} and \eqref{eq:LipeigN_Hf2}, \eqref{eq:LipeigN_HTf} gives \eqref{eq:LipeigN_above} for $C_{n}(N,D',D)=C_{1}nC_{2}^2$.\\
  
  \noindent {\bf Min-max principle and conclusion:} Let us remind the following min-max principle: \begin{equation} \mu_n(K')=\min_{\text{dim}(V)=n}\max_{\substack{g\in V\\g\ne0}}\frac{\|\nabla g\|_{L^2(\Om_{K'})}^2}{\|g\|_{L^2(\Om_{K'})}^2}\label{eq:LipeigN_infsup}\end{equation} where the minimum is taken over all $n$-dimensional subspaces $V\subset H^1(\Om_{K'})$. 

In the view of \eqref{eq:LipeigN_L2f}, if $|\Om_K\setminus\Om_{K'}|\leq\eps_n$ for some $\eps_n=\eps_n(N,D',D)$, we deduce $\|Tf\|_{L^2(\Om_{K'})}>0$ for any  $f=\sum_{k=1}^n\alpha_k v_k$ with $\|f\|_{L^2(\Om_K)}=1$, thus getting that $Tv_1,\ldots,Tv_n$ are linearly independent as $(v_{1},\ldots,v_{n})$ also are. Setting $L_n:=\text{Vect}(v_1,...,v_n)$, formula \eqref{eq:LipeigN_infsup} therefore implies \begin{equation} \mu_n(K')\leq \max_{\substack{ g\in TL_n\\ g\ne0}}\frac{\|\nabla g\|_{L^2(\Om_{K'})}^2}{\|g\|_{L^2(\Om_{K'})}^2}= \max_{\substack{ f\in L_n\\ \|f\|_{L^2(\Om_K)=1}}}\frac{\|\nabla Tf\|_{L^2(\Om_{K'})}^2}{\|Tf\|_{L^2(\Om_{K'})}^2}\nonumber\end{equation} Putting estimates \eqref{eq:LipeigN_below} and \eqref{eq:LipeigN_above} provides \begin{align}\nonumber \mu_n(K')&\leq \big(\mu_n(K)+C_n|\Om_{K'}\setminus\Om_K|\big)\big(1+2C_n|\Om_K\setminus\Om_{K'}|\big)\\ &\leq\mu_n(K)+\widetilde{C}_n|\Om_K\Delta\Om_{K'}|\nonumber \end{align}for some constant $\widetilde{C}_n(N,D',D)$, if $|\Om_K\setminus\Om_{K'}|\leq\eps_n$, where we used $\mu_n(K)\leq\lambda_n(K)\leq\lambda_n(D')$. Switching the roles played by $K$ and $K'$ we get \eqref{eq:LipeigN_final}. \qed

\subsection{Optimality of the $C^{1,1}$ regularity}\label{ssect:sharp}

We show in this short section that the H\"{o}lder regularity obtained in Theorem \ref{th:shape1} is optimal. Precisely we prove: 
\begin{prop}\label{th:C11optim} In $\R^2$, there exists a quasi-minimizer of the perimeter under convexity constraint which is $C^{1,1}$ but not $C^2$.\end{prop}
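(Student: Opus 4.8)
The strategy is to produce an explicit example by solving a shape optimization problem of the form \eqref{eq:pbshape} whose unique solution (up to rigid motions) is a ``stadium'', i.e. the convex hull of two disjoint balls of equal radius, also known as a discocylinder or sausage body. Such a set is $C^{1,1}$ (its boundary consists of two line segments and two half-circles, glued with matching tangents) but it is not $C^2$, since the curvature jumps from $0$ on the flat parts to $1/r$ on the circular caps. So it suffices to exhibit a functional $R$ satisfying \eqref{eq:hypR3} for which the associated problem \eqref{eq:pbshape} admits such a stadium as a minimizer; then Theorem \ref{th:shape1} (or rather the fact that its conclusion cannot be improved) gives the claim. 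Concretely, I would fix $D=\R^2$, a target volume $V_0>0$ and an ``anisotropy'' that favors elongation, for instance take $R(K):=-\tfrac{1}{2}\diam(K)$ together with a volume penalization, or more cleanly work with the constrained problem $\min\{P(K): K\in\K^2,\ |K|=V_0,\ \diam(K)=L\}$ for a well-chosen $L$ large enough compared to $\sqrt{V_0}$, which by the penalization techniques of Section \ref{sect:shape3} (Lemma \ref{lem:shapepenal}, used here to penalize the diameter rather than the volume, together with the elementary fact that $\diam$ is Lipschitz for the Hausdorff distance, hence satisfies \eqref{eq:hypR4}-type bounds) reduces to an unconstrained problem of the form \eqref{eq:pbshape} to which Corollary \ref{th:shapecor} applies.

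The key geometric input is the following classical isoperimetric-type statement: \emph{among all planar convex bodies of given area $V_0$ and given diameter $L$ (with $L$ large enough that the two constraints are compatible and non-degenerate), the perimeter is minimized precisely by the stadium} whose straight part has length $L-2r$ and whose caps are half-disks of radius $r$, with $r$ determined by $V_0=\pi r^2+2r(L-2r)$. I would prove this by a symmetrization/rearrangement argument: given a convex body $K$ with $\diam(K)=L$ realized by a segment in direction $e_1$, write $K$ as the region between two concave graphs over $[0,L]$; Steiner symmetrization in the $e_2$-direction preserves area, does not increase perimeter, preserves convexity, and keeps the diameter equal to $L$; this reduces to symmetric $K$, i.e. $K=\{(x,y):\ |y|\le h(x)\}$ with $h$ concave, $h\ge 0$, $h(0)=h(L)=0$. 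One is then minimizing $\int_0^L 2\sqrt{1+h'^2}\,dx$ subject to $\int_0^L 2h\,dx=V_0$ and the diameter constraint; the Euler–Lagrange analysis (with care about the boundary behaviour at $x=0,L$, where the diameter constraint is active) forces $h$ to be either an affine function or a circular arc, and the interplay of the two constraints forces exactly the stadium profile. A slicker route avoiding calculus of variations: among convex sets contained in a strip-and-disk configuration with fixed diameter, the stadium is the unique maximizer of area for given perimeter — this is essentially a known extremal result (e.g. it follows from the Blaschke–Lebesgue-type reasoning, or from the fact that the stadium is the ``outer parallel body at distance $r$ of a segment''), and it can be cited or reproved via the two successive Steiner symmetrizations plus an explicit comparison on the symmetric profiles.

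Once the geometric extremal property is established, the remaining steps are routine. First, one must arrange the parameters $(V_0,L)$ so that the stadium is a genuine (and, ideally, unique up to isometry) minimizer of the auxiliary constrained problem, and check that the constraint $\diam(K)=L$ is \emph{active} at the optimum — this is automatic for $L$ strictly between the diameter of the disk of area $V_0$ and the degenerate regime, so that the minimizer is not the disk. Second, one invokes the penalization machinery: just as Lemma \ref{lem:shapepenal} penalizes a volume constraint using Minkowski combinations, one penalizes the diameter constraint, noting $\diam((1-t)K+tD)=(1-t)\diam(K)+t\,\diam(D)$ is affine in $t$ and that $\diam$ is globally Lipschitz for $|\cdot\,\Delta\,\cdot|$ on $\K^N_{D',D}$, so the penalized functional $P(K)+R(K)$ with $R(K)=\Lambda\big|\diam(K)-L\big|+(\text{volume penalty})$ satisfies \eqref{eq:hypR3}; Corollary \ref{th:shapecor} then shows the stadium, being a minimizer of this problem, is a quasi-minimizer of the perimeter under convexity constraint in the sense of Definition \ref{def:qmpc}. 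Third, one observes directly that the boundary of a stadium is $C^{1,1}$ but not $C^2$ (explicit parametrization), which completes the proof. The main obstacle is the geometric extremality claim — making the two Steiner symmetrizations rigorously preserve the diameter and then nailing down that the optimal symmetric profile is exactly the half-disk/segment/half-disk shape (and not, say, an ellipse-like arc) requires the careful Euler–Lagrange discussion at the endpoints, or a clean reference; everything downstream of that is a mechanical application of tools already developed in Sections \ref{sect:shape1} and \ref{sect:shape3}.
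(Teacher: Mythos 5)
Your construction breaks down at its central geometric claim: the stadium does \emph{not} minimize the perimeter among planar convex bodies of prescribed area and prescribed diameter. Your own Euler--Lagrange computation already shows this: after Steiner symmetrization, the profile $h$ must have constant curvature wherever it is free, and the constant-curvature concave graph joining $(0,0)$ to $(L,0)$ is a single circular arc, so the natural extremal is a symmetric \emph{lens} (intersection of two congruent disks with tips at distance $L$), not a half-disk/segment/half-disk profile. A direct comparison confirms it: for $L=2$ and $V_0=1+\pi/4$, the stadium (cap radius $1/2$) has perimeter $2+\pi\approx 5.14$, while the lens of the same area and the same diameter $2$ has perimeter $\approx 4.97$. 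Since the lens has corners at its tips, this route cannot produce a $C^{1,1}$ extremal set at all. A second, independent failure occurs in the penalization step: the diameter is \emph{not} Lipschitz for the volume of the symmetric difference on $\K^N_{D',D}$, so it satisfies neither \eqref{eq:hypR3} nor \eqref{eq:hypR4}. Indeed, cutting a corner near a diameter-realizing vertex at depth $\delta$ (e.g.\ $K=[0,1]^2$, $\widetilde K=K\cap\{x+y\le 2-\delta\}$) changes $\diam$ by order $\delta$ but the volume only by order $\delta^2$; $\diam$ is Lipschitz only for the Hausdorff distance, which is exactly the distinction the paper insists on in the remark following Proposition \ref{th:LipeigN}. (The identity $\diam((1-t)K+tD)=(1-t)\diam(K)+t\diam(D)$ is also false in general; only ``$\le$'' holds.) These two failures are consistent: if the diameter-penalized functional did satisfy \eqref{eq:hypR3}, Corollary \ref{th:shapecor} would force the minimizer to be $C^{1,1}$, contradicting the fact that the actual minimizer is a lens with corners. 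So no adaptation of Lemma \ref{lem:shapepenal} can rescue the argument as stated.

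The paper takes an entirely different route that sidesteps both problems: it invokes the sharp quantitative isoperimetric inequality in $\K^2$ (Theorem \ref{th:optimalconstant}, from \cite{ANF}), whose extremal set is a stadium $K^*$. Thus $K^*$ minimizes $P-c\,P(B_{V_0})\,\alpha^2$ among planar convex bodies of volume $V_0$, and the squared Fraenkel asymmetry $\alpha^2$ \emph{is} genuinely Lipschitz for $|\cdot\,\Delta\,\cdot|$ (a two-line verification), so it satisfies \eqref{eq:hypR4} and the machinery of Sections \ref{sect:shape1}--\ref{sect:shape3} applies. If you want to keep your strategy, you must replace the diameter by a perturbation that is volume-Lipschitz and whose extremal set is provably a stadium --- which is precisely what the asymmetry term accomplishes.
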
 

Let us first introduce some notations. If $\Om$ is a measurable set we define its (scaling invariant) Fraenkel asymmetry $\alpha(\Om)$, which is scaling invariant: \begin{equation}\nonumber \alpha(\Om):=\inf\left\{\frac{|\Om\Delta B|}{|\Om|}, \ B\subset \R^2 \text{ a ball }, \ |B|=|\Om|\right\}.\end{equation} We also denote $D(\Om):=(P(\Om)-P(B))/P(B)$ the normalized isoperimetric deficit, where $B$ is any ball with same volume than $\Om$. We call {\it stadium} a set $\Om\subset \R^2$ which is obtained as the convex envelope of two disjoint disks of same radius.

The following result is proved in \cite{ANF} (see also \cite[Theorem 1.2]{CCH}, and the introduction of \cite{CL}).
\begin{theorem}\label{th:optimalconstant} It holds \begin{equation}\nonumber \inf_{K\in\K^2}\frac{D(K)}{\alpha(K)^2}\simeq0.405585>0\end{equation} and equality is achieved at a particular stadium $K^*$.\end{theorem}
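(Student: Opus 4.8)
The plan is to prove the statement in three stages: (i) a compactness argument producing a minimizer of the scale-, translation- and rotation-invariant functional $\mathcal F(K):=D(K)/\alpha(K)^2$ over $\K^2$; (ii) a symmetrization plus Euler--Lagrange argument identifying the minimizer as a stadium; (iii) an explicit one-parameter computation evaluating the infimum. Stage (ii) is the crux; stages (i) and (iii) are standard compactness and a one-variable optimisation.

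\smallskip\noindent\emph{Stage (i): existence.} Normalize by scaling so that $|K|=\pi$ and take a minimizing sequence $(K_j)$. If $\diam(K_j)\to\infty$, then since $\alpha(K_j)<2$ for every convex body while $D(K_j)\to\infty$ as the diameter of a convex body of fixed area diverges, we would get $\mathcal F(K_j)\to\infty$; hence $\diam(K_j)$ is bounded and, after a translation, the $K_j$ lie in a fixed ball. By the Blaschke selection theorem $K_j\to K^\ast$ in Hausdorff distance along a subsequence, with $K^\ast\in\K^2$ and $|K^\ast|=\pi$. Both $P$ and $\alpha$ are continuous under Hausdorff convergence of convex bodies of fixed area, so $\mathcal F(K^\ast)\le\liminf_j\mathcal F(K_j)$ as soon as $\alpha(K^\ast)>0$. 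That $K^\ast$ is not a disk is the only delicate point of this stage: a second-order (Fuglede-type) expansion of the isoperimetric deficit about the disk shows that $\mathcal F$ is bounded below by a positive constant $L_0$ on a Hausdorff neighbourhood of the disk (for a radial perturbation with Fourier energies $c_k$ one gets $D\simeq\tfrac14\sum_{k\ge2}(k^2-1)c_k$ and $\alpha\simeq\tfrac1\pi\|u\|_{L^1}$, whence $L_0>0$), while the explicit computation of Stage (iii) exhibits a stadium with $\mathcal F$ strictly below $L_0$; hence the infimum over $\K^2$ lies strictly below $L_0$, no minimizing sequence converges to a disk, and $K^\ast$ is a genuine minimizer.

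\smallskip\noindent\emph{Stage (ii): the minimizer is a stadium.} Let $B^\ast$ be a ball realising $\alpha(K^\ast)$. A two-point / Steiner-type symmetrisation argument — using that convexity, area, perimeter and the functional $K\mapsto|K\Delta B|$ all behave monotonically under these rearrangements — reduces to a minimizer symmetric across two orthogonal lines through the centre of $B^\ast$, with $B^\ast$ centred there (so in particular $B^\ast$ is unique). One then computes the first variation of $\mathcal F$ under area- and convexity-preserving normal perturbations of $\partial K^\ast$. On the part of $\partial K^\ast$ lying strictly outside $\overline{B^\ast}$ (resp.\ strictly inside), the function $\mathbbm{1}_{K^\ast}-\mathbbm{1}_{B^\ast}$ entering the variation of $\alpha$ is constant, so the Euler--Lagrange condition forces the curvature of $\partial K^\ast$ to be constant there; that part of $\partial K^\ast$ therefore consists of circular arcs, while on $\partial K^\ast\cap\partial B^\ast$ the boundary agrees with $\partial B^\ast$. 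Combined with the two axes of symmetry and with convexity, this forces $\partial K^\ast$ to be two congruent circular arcs joined by two parallel segments, i.e.\ $K^\ast$ is a stadium. (Equivalently, one may establish the stand-alone fact that among convex bodies of given area and given Fraenkel asymmetry the perimeter is minimized by a stadium, which reduces the whole infimum to stadiums directly.) This is the hard part: because $\alpha$ is an infimum over balls it is only one-sidedly differentiable, so one must first secure uniqueness and the correct symmetry of $B^\ast$ at $K^\ast$; and the Euler--Lagrange condition on the free part of $\partial K^\ast$ must be extracted while respecting the convexity constraint, which turns the constant-curvature equation into a one-sided inequality on the flat portions and requires care to conclude the arc-and-segment structure.

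\smallskip\noindent\emph{Stage (iii): one-parameter reduction.} Up to similarity a stadium is $S_d:=\Conv\big(B_1((-\tfrac d2,0))\cup B_1((\tfrac d2,0))\big)$ for $d\in[0,\infty)$, with $P(S_d)=2\pi+2d$ and $|S_d|=\pi+2d$; hence the ball of equal area has radius $R_d=\sqrt{1+2d/\pi}$ and $D(S_d)=\dfrac{\pi+d}{\pi R_d}-1$. By symmetry the ball realising $\alpha(S_d)$ is $B_{R_d}$ centred at the origin, so $\alpha(S_d)=2-\dfrac{2\,|S_d\cap B_{R_d}|}{\pi+2d}$. One checks $1\le R_d\le 1+\tfrac d2$ for all $d\ge0$, so $B_{R_d}$ protrudes only through the two flat sides of $S_d$ while $S_d$ protrudes only through the two caps, and $|S_d\cap B_{R_d}|$ is then a completely elementary planar computation (two circular segments plus a rectangle). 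This presents $\mathcal F(S_d)$ as an explicit continuous function of $d\in(0,\infty)$ with $\mathcal F(S_d)\to L_0$ as $d\to0^+$ and $\mathcal F(S_d)\to+\infty$ as $d\to\infty$; minimising it in the single variable $d$ yields $\min_{d>0}\mathcal F(S_d)\simeq0.405585$, attained at a unique $d^\ast$, and $K^\ast=S_{d^\ast}$ is the claimed extremal stadium.
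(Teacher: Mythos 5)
First, note that the paper does not prove this statement at all: it is quoted verbatim from the literature (the reference [ANF], see also [CCH]), so there is no in-paper proof to compare with, and the theorem you are sketching is itself a substantial research result whose entire difficulty is concentrated in your Stage (ii). Stages (i) and (iii) of your outline are essentially fine (existence by Blaschke selection after ruling out degeneration to a disk via a Fuglede-type expansion, and the elementary one-parameter computation over stadiums), although even there several asserted facts would need proofs: continuity of $\alpha$ along the Hausdorff-convergent subsequence, the claim that the optimal ball for $S_d$ is the one centred at the centre of symmetry (true, e.g.\ by log-concavity of $c\mapsto|S_d\cap B_{R_d}(c)|$, but not "by symmetry" alone), and the comparison $\min_d \mathcal F(S_d)<L_0$ which requires actual numerics.

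The genuine gap is Stage (ii). The assertion that a two-point/Steiner-type symmetrisation makes "convexity, area, perimeter and $K\mapsto|K\Delta B|$ all behave monotonically" is precisely the step that fails: the Fraenkel asymmetry is notoriously \emph{not} monotone under such symmetrisations (symmetrisation can destroy asymmetry faster than it reduces the deficit, so the ratio $D/\alpha^2$ may increase), and this is exactly why the quantitative isoperimetric literature — including the cited proof — cannot argue this way and needs much more delicate geometric constructions or a reduction with loss of constants. Without this monotonicity you have neither the two axes of symmetry nor the uniqueness and centring of the optimal ball $B^*$, and the subsequent Euler--Lagrange step collapses. Moreover, even granting the symmetries, the variational analysis is only gestured at: $\alpha$ is an infimum over balls and only one-sidedly differentiable, the admissible perturbations must preserve convexity (so on flat portions the stationarity condition is an inequality, as you note), and the conclusion that $\partial K^*$ consists of exactly two congruent arcs joined by two parallel segments — excluding, e.g., arcs of two distinct curvatures on the regions inside and outside $B^*$, or pieces of $\partial K^*$ lying on $\partial B^*$ — is asserted rather than derived. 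As written, the proposal is a plausible programme but not a proof of the theorem; the correct course here is to cite [ANF] (as the paper does) or to reproduce its argument in full.
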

\noindent{\bf Proof of Proposition \ref{th:C11optim}:} Let us note that any stadium is $C^{1,1}$ but not $C^2$, as the curvature jumps from value $0$ on a flat part to a positive value on a semi-circle. Call $c^*$ the value of the infimum above,  and set $V_0:=|K^*|$ and $B_{V_0}$ a ball of volume $V_0$. Then Theorem \ref{th:optimalconstant} implies \begin{equation} P(K)-P(B_{V_0})-c^*P(B_{V_0})\alpha(K)^2\geq0\nonumber \end{equation} for any planar convex body $K$ of volume $V_0$, with equality at $K^*$. In other words, $K^*$ minimizes the functional $P-P(B_{V_0})-c\alpha^2$ among planar convex sets of volume $V_0$, with $c:=P(B)c^*$. If one proves that $\alpha^2$ satisfies hypothesis  \eqref{eq:hypR4}, then we deduce that $K^*$ is a quasi-minimizer of the perimeter under convexity constraint, which concludes the proof. Let $(K,K')\in\K^2$, and with no loss of generality (as $\alpha$ is invariant with scaling) let us assume that $|K|=|K'|=1$. We denote $B$ an optimal ball in the definition of $\alpha(K)$: we have \begin{equation}\nonumber  \alpha^2(K')-\alpha^2(K)\leq 2(\alpha(K')-\alpha(K))\leq 2\left(|K'\Delta B|-|K\Delta B|\right)\leq 2|K\Delta K'|\end{equation}
where the last inequality is obtained by easily checking that $K'\Delta B\subset (K\Delta K')\cup(K\Delta B)$. Inverting the roles played by $K$ and $K'$ we deduce that $\alpha$ verifies \eqref{eq:hypR4}, thus completing the proof. \qed

\section{Appendix}

\subsection{Parametrization of convex bodies in cartesian graphs} 

Let us start by covering a few preliminaries about Lipschitz sets. The following definition, first given by D. Chenais in \cite{Ch}, is a very convenient way of considering "uniformly" Lipschitz sets. 

\begin{definition}\label{th:defeps} Let $\eps>0$. We say that an open set $\Om\subset\R^N$ satisfies the $\eps$-cone condition if for any $x\in \partial\Om$ there exists a unit vector $\xi_x$ such that\begin{equation}\forall y\in B_{\eps}(x)\cap\overline{\Om},\ C(y,\xi_x,\eps)\subset\Om\label{eq:epscone}\end{equation} where we set \begin{equation}\nonumber C(y,\xi_x,\eps):=\{z\in \R^N, \langle z-y,\xi_x\rangle> \cos(\eps)|z-y|, 0< |z-y|< \eps\}.\end{equation}\end{definition}

\begin{remark}\label{th:rk_eps}For any fixed $M\geq m>0$, it can be shown that any open convex set $\Om$ such that there exists $x\in\R^N$ with \begin{equation} \nonumber \ B_m(x)\subset \Om\subset B_M(x)\end{equation} verifies the $\eps$-cone condition for some $\eps=\eps(m,M)$ (see for instance \cite[Proposition 2.4.4]{HP}).\end{remark}

The following proposition shows that one can see a convex set as the graph of a Lipschitz function with specific additional properties that will be used in the proof of Theorem \ref{th:shape1} (see Figure \ref{fig:convex_graph} for an illustration).

\begin{prop}\label{th:appgraph}Let $K\in\K^N$. For any $\widehat{x_0}\in\partial K$, there exists \begin{itemize}\item A hyperplane $H\subset \R^N$ containing $\widehat{x_{0}}$, \item A unit vector $\xi\in\R^N$ normal to $H$, \end{itemize} such that, denoting by $(x,t)$ a point in $H\times\R\xi$ coordinates (and hence denoting $\widehat{x_0}:=(x_0,0)$), it holds \begin{enumerate} \item  The set $\Om:=\left\{x\in H, \ (x+\R\xi)\cap \text{Int}(K) \ne \emptyset\right \}$ is open, bounded and convex, and the function \begin{align}\nonumber u: \Om  &\to\R \\ x&\mapsto \min\{t\in\R,\ (x,t)\in K\}\nonumber\end{align} is well-defined and convex. Furthermore, if $\eps$ is such that $\text{Int}(K)$ satifies the $2\eps$-cone condition (see Definition \ref{th:defeps}), then $B^\eps:=B_{\eps\tan(\eps)}(x_0)\subset\Om$ and $u_{|B^\eps}$ is $\tan(\eps)^{-1}$-Lipschitz. \item It holds \begin{align}\nonumber\{(x,u(x)), \ x\in \Om\}&\subset \partial K \\ K\cap (\Om\times\R\xi)&\subset\{(x,t)\in\Om\times\R\xi,\ u(x)\leq t\}\nonumber\end{align}\item For any open set $\omega\Subset B^\eps$, there exists $c>0$ such that \begin{equation}\nonumber \{(x,t)\in \omega\times\R\xi,\ u(x)\leq t\leq u(x)+c\}\subset K\end{equation}Furthermore we can choose $c$ only depending on $d(\omega,\partial B^\eps)$ and $\eps$.\end{enumerate}\end{prop}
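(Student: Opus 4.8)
The strategy is to choose the hyperplane $H$ as a supporting hyperplane to $K$ at $\widehat{x_0}$, with $\xi$ the inner normal. First I would verify item (1): translate so that $\widehat{x_0}$ is the origin of $H\times\R\xi$ coordinates. The set $\Om=\pi_H(\text{Int}(K))$ is the orthogonal projection onto $H$ of the open convex set $\text{Int}(K)$, hence is open, bounded and convex. For $x\in\Om$, the vertical line $x+\R\xi$ meets $K$ in a nonempty compact segment (compactness of $K$, convexity), so $u(x):=\min\{t:(x,t)\in K\}$ is well-defined; convexity of $u$ follows from convexity of $K$ by a standard argument (if $(x_i,u(x_i))\in K$ then the segment between them lies in $K$, giving $u$ on the midpoint $\leq$ the average). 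The quantitative statement --- that $B^\eps=B_{\eps\tan(\eps)}(x_0)\subset\Om$ and $u_{|B^\eps}$ is $\tan(\eps)^{-1}$-Lipschitz --- is where the $2\eps$-cone condition enters: at $\widehat{x_0}\in\partial K$ there is a cone $C(\widehat{x_0},\xi_{\widehat{x_0}},2\eps)\subset\text{Int}(K)$, and since $H$ supports $K$ at $\widehat{x_0}$ the cone axis $\xi_{\widehat{x_0}}$ must point into the half-space $\{t>0\}$; in fact one checks $\xi_{\widehat{x_0}}=\xi$ necessarily (any other direction would force points of $\text{Int}(K)$ on the wrong side of $H$). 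Then for $x\in H$ with $|x|<\eps\tan(\eps)$ the point $(x,|x|/\tan(\eps))$ lies in the cone hence in $\text{Int}(K)$, which shows $x\in\Om$ and $u(x)\leq |x|/\tan(\eps)$; a translation/comparison argument applying the cone condition at every point of $B^\eps\cap\overline{\Om}$ upgrades this pointwise bound to the Lipschitz bound on $B^\eps$.

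Item (2) is essentially the definition of $u$ together with convexity: $(x,u(x))\in\partial K$ because $(x,u(x))\in K$ by definition and $(x,u(x)-\delta)\notin K$ for $\delta>0$ by minimality, so $(x,u(x))$ is not interior; and if $(x,t)\in K\cap(\Om\times\R\xi)$ then $t\geq\min\{s:(x,s)\in K\}=u(x)$. I would write this out in a couple of lines.

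Item (3) is the most delicate: given $\omega\Subset B^\eps$, I need a uniform vertical ``collar'' of height $c$ above the graph that still lies in $K$. The idea is to use the cone condition together with the Lipschitz bound from (1). Take $x\in\omega$; the point $(x,u(x))\in\partial K$, so there is a cone $C((x,u(x)),\xi_x,2\eps)\subset\text{Int}(K)$. Because $u$ is $\tan(\eps)^{-1}$-Lipschitz near $x$, the graph of $u$ over a small ball around $x$ is trapped below a downward cone of opening determined by $\eps$, which forces $\xi_x$ to have a definite positive $\xi$-component (bounded below in terms of $\eps$ only); hence the cone $C((x,u(x)),\xi_x,2\eps)$ contains a genuine vertical segment $\{x\}\times(u(x),u(x)+c]$ with $c$ depending only on $\eps$ --- but one must check this segment stays inside the cone for all $x\in\omega$, and near $\partial B^\eps$ the cone might ``tilt out'' of the region where we control $u$, which is exactly why $c$ is allowed to depend on $d(\omega,\partial B^\eps)$. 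The cleanest route is probably: for $x\in\omega$ and a small horizontal ball $B_\delta(x)\subset B^\eps$ with $\delta\sim d(\omega,\partial B^\eps)$, the set $\{(y,s):y\in B_\delta(x),\ u(y)\leq s\leq u(y)+\eps'\}$ is contained in the union of cones $C((y,u(y)),\xi_y,2\eps)$ for $y$ ranging over $B_\delta(x)$, hence in $K$, for a suitable $\eps'=\eps'(\delta,\eps)$. I expect this last step --- making the dependence of $c$ on $d(\omega,\partial B^\eps)$ and $\eps$ precise, and handling the uniformity of the cone directions $\xi_x$ over $x\in\omega$ via the Lipschitz control --- to be the main obstacle, the rest being routine convex-geometry bookkeeping.
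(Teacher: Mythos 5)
The central step of your plan --- choosing $H$ to be a supporting hyperplane at $\widehat{x_0}$ and then asserting that the cone direction $\xi_{\widehat{x_0}}$ from the $2\eps$-cone condition ``must equal'' its inner normal $\xi$ --- is false, and this breaks the quantitative part of item (1). A cone of half-opening $2\eps$ whose axis makes any angle less than $\pi/2-2\eps$ with $\xi$ still lies entirely in the open half-space $\{t>0\}$, so nothing forces $\xi_{\widehat{x_0}}=\xi$. Concretely, take $K=[0,1]^2$ and $\widehat{x_0}=(0,0)$: the cone direction at the vertex is necessarily close to the diagonal, while a perfectly admissible supporting hyperplane is the line of the bottom edge, whose inner normal is $e_2\neq\xi_{\widehat{x_0}}$. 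With that choice of $H$ the projection $\Om$ is the open edge and $x_0$ sits on its boundary, so $B^\eps\subset\Om$ fails outright, and the Lipschitz estimate with it. Since the proposition is an existence statement, the choice matters: the paper proceeds in the opposite order, taking $\xi:=\xi_{\widehat{x_0}}$ (the cone direction itself) and $H$ the hyperplane through $\widehat{x_0}$ orthogonal to it. The price is that $H$ need not support $K$, so one must separately establish a lower barrier, namely $(x,-\eps)\notin\text{Int}(K)$ for $x\in B^\eps$; this follows from showing $C(\widehat{x_0},-\xi,2\eps)\cap\text{Int}(K)=\emptyset$ (if $\widehat{x}$ were an interior point of the backward cone, the forward cone at $\widehat{x}$ would contain $\widehat{x_0}$, contradicting $\widehat{x_0}\in\partial K$). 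Together with the upper barrier $(x,\eps)\in\text{Int}(K)$ (from the cone at $\widehat{x_0}$), this pins the graph points $(x,u(x))$, $x\in B^\eps$, inside $B_{2\eps}(\widehat{x_0})$, which is exactly what allows the cone condition to be applied at $(x,u(x))$ to get the $\tan(\eps)^{-1}$-Lipschitz bound. Your plan is missing both the correct choice of $(H,\xi)$ and this two-sided barrier argument.

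A secondary point: item (3) does not require cones at the graph points nor any control of the tilting directions $\xi_x$ --- the step you flag as the main obstacle is not needed. Once one knows $(x,\eps)\in\text{Int}(K)$ for all $x\in B^\eps$, the whole vertical segment from $(x,u(x))$ up to $(x,\eps)$ lies in $K$ by convexity, so it suffices to check $u(x)+c\leq\eps$ for $x\in\omega$; the Lipschitz bound and $u(x_0)=0$ give $u(x)<\eps-\delta\tan(\eps)^{-1}$ with $\delta=d(\omega,\partial B^\eps)$, so $c:=\delta\tan(\eps)^{-1}$ works.
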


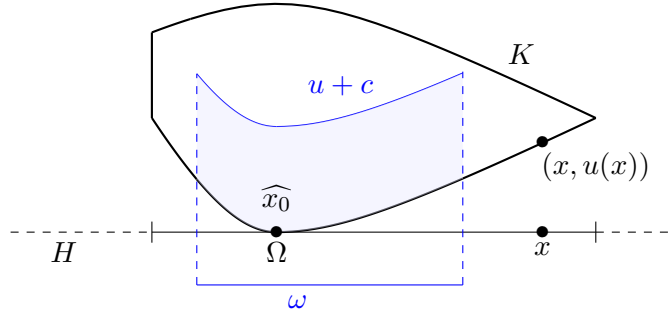
\begin{figure}
\centering
 \begin{tikzpicture}[scale=1.4]
\draw(-1.172,0) -- (3, 0);
\draw (-1.172,0.1) -- (-1.172,-0.1);
\draw(3,0.1) -- (3,-0.1);
\draw(0, 0)node[below]{$\Omega$};
\draw[thick,domain=-1.172:-0.75, variable=\x] plot ({\x}, {2*(sqrt(1+\x*\x)-1)});
\draw[thick,domain=-0.75:0, variable=\x, name path = h] plot ({\x}, {2*(sqrt(1+\x*\x)-1)});
\draw[thick,domain=0:1.75, variable=\x, name path= hh] plot ({\x}, {0.5*(sqrt(1+\x*\x)-1)});
\draw[thick,domain=1.75:3, variable=\x] plot ({\x}, {0.5*(sqrt(1+\x*\x)-1)});
\draw[thick,domain=-0.81:3, name path = g, variable=\x] plot ({\x}, {2*1.081-0.5*(sqrt(1+\x*\x)-1)});
\draw[thick,domain=-1.172:-0.81, name path = i, variable=\x] plot ({\x}, {2*1.081-0.5*(sqrt(1+\x*\x)-1)});
\draw[thick]  (-1.172,1.081) -- (-1.172,1.8917);

\draw[blue,domain=-0.75:0, name path=ii, variable=\x] plot ({\x}, {2*sqrt(1+\x*\x)-1});
\draw[blue,domain=0:1.75, name path=jj, variable=\x] plot ({\x}, {0.5*sqrt(1+\x*\x)+0.5});
\draw[blue] (0.6,1.2) node[above]{$u+c$};
\tikzfillbetween[of=ii and h]{blue!10, opacity=0.4}; 
\tikzfillbetween[of=jj and hh]{blue!10, opacity=0.4}; 

\draw (0,0) node{$\bullet$} ;
\draw (0,0.1) node[above]{$\widehat{x_0}$};
\draw (2.3,1.5) node[above]{$K$} ;
\draw (2.5,0) node[below]{$x$};
\draw (2.5,0) node{$\bullet$} ;
\draw (-2,0) node[below]{$H$};
\draw[dashed] (2.5,0) -- (2.5,0);
\draw[dashed] (-2.5,0) -- (-1.172,0);
\draw[dashed] (3,0) -- (3.7,0);
\draw[blue] (-0.75,-0.5) -- (1.75,-0.5);
\draw(2.5,0.85) node{$\bullet$};
\draw(3,0.85) node[below]{$(x,u(x))$};
\draw[blue] (0.2,-0.5) node[below] {$\omega$};
\draw[blue,dashed] (-0.75,-0.5) -- (-0.75,1.5);
\draw[blue,dashed] (1.75,-0.5) -- (1.75,1.52);
\end{tikzpicture}
\caption{\label{fig:convex_graph} Convex body in cartesian graph}
\end{figure}

\begin{proof}
The proof is inspired from \cite[Theorem 2.4.7]{HP}, with a few adaptations due to convexity. Since $K\in\K^N$, then $\text{Int}(K)$ satisfies the cone condition (see Definition \ref {th:defeps} and Remark \ref{th:rk_eps}). We can assume without loss of generality that it satisfies the $2\eps$-cone condition for some $\eps>0$ with $\tan(\eps)\leq1$. Let then $\xi:=\xi_{\widehat{x_0}}$ be a unit vector associated to $\widehat{x_0}$ and the $2\eps$-cone condition, that is \[\forall \widehat{x}\in K\cap B_{2\eps}(\widehat{x_0}),\ C(\widehat{x},\xi,2\eps)\subset  \text{Int}(K)\] We set $H:=\{\widehat{x}\in\R^N, \langle \widehat{x}-\widehat{x_0},\xi\rangle=0\}$. 

The function $u$ is well-defined by construction of $\Om$. The convexity of $K$ gives immediately that $u$ is convex: if $\lambda\in[0,1]$ and $x,y\in\Om$, the point $(1-\lambda)(x,u(x))+\lambda(y,u(y))\in K$ since $K$ is convex, giving that $u((1-\lambda)x+\lambda y)\leq (1-\lambda)u(x)+\lambda u(y)$ by definition of $u$.

For the rest of the proof we will write $(y,y_\xi)$ for the $H\times\R\xi$ coordinates of a point $\widehat{y}\in\R^N$. Any cone $C(\widehat{x},\xi,2\eps)$ can be written \begin{equation}\label{eq:cone_rewrite}C(\widehat{x},\xi,2\eps)=\left\{\widehat{y}\in B_{2\eps}(\widehat{x}),\ y_{\xi}-x_{\xi}> \frac{1}{\tan(\eps)}|y-x|\right\}\end{equation} Indeed, if $\widehat{y}\in\R^N$ and $\widehat{x}\in\R^N$ are such that $y_\xi-x_\xi\geq0$, then \begin{align}y_\xi-x_\xi>\cos(\eps)|\widehat{y}-\widehat{x}| &\Longleftrightarrow  (y_\xi-x_\xi)^2>\cos^2(\eps)\big(|y-x|^2+(y_\xi-x_\xi)^2\big)\nonumber\\&\Longleftrightarrow y_\xi-x_\xi>\frac{1}{\tan(\eps)}|y-x|\nonumber\end{align}Recalling that $B^\eps=\{x\in H,\ |x-x_0|< \eps\tan(\eps)\}$ we claim that \begin{equation}\label{eq:Appxeps}\forall x\in B^\eps, \ \begin{cases}(x,\eps)\in  \text{Int}(K)\\ (x,-\eps)\notin  \text{Int}(K)\end{cases}\end{equation}Indeed, let $x\in B^\eps$. Let us first show that $(x,\eps)\in  \text{Int}(K)$. Since $C(\widehat{x_0},\xi,2\eps)\subset \text{Int}(K)$ by the $2\eps$-cone condition, then it suffices to prove that $(x,\eps)\in C(\widehat{x_0},\xi,2\eps)$. But as $|x-x_0|< \eps$ it holds $(x,\eps)\in B_{2\eps}(\widehat{x_0})$, and furthermore $\tan(\eps)^{-1}|x-x_0|<\eps$, so that we deduce $(x,\eps)\in C(\widehat{x_0},\xi,2\eps)$ thanks to \eqref{eq:cone_rewrite} and hence $(x,\eps)\in \text{Int}(K)$. For the second assertion it is sufficient to prove that $C(\widehat{x_0},-\xi,2\eps)\subset \R^N\setminus \text{Int}(K)$, since in any case $(x,-\eps)\in C(\widehat{x_0},-\xi,2\eps)$ using again \eqref{eq:cone_rewrite}, as $(x,-\eps)\in B_{2\eps}(\widehat{x_0})$ with $\tan(\eps)^{-1}|x-x_0|<\eps=(-\eps)\times(-1)$. Suppose then by contradiction that there exists $\widehat{x}\in C(\widehat{x_0},-\xi,2\eps)\cap  \text{Int}(K)$; since $\widehat{x}\in B_{2\eps}(\widehat{x_0})$, it holds that $C(\widehat{x},\xi,2\eps)\subset  \text{Int}(K)$ by the $2\eps$-cone property. But then $\widehat{x_0}\in C(\widehat{x},\xi,2\eps)$, yielding $\widehat{x_0}\in \text{Int}(K)$, which is a contradiction. This finishes the proof of \eqref{eq:Appxeps}.

Thanks to \eqref{eq:Appxeps}, it holds $B^\eps\Subset \Om$. Let us show that $u_{|B^\eps}$ is Lipschitz continuous with Lipschitz constant $\tan(\eps)^{-1}$. If $x\in B^\eps$ then $(x,\eps)\in  \text{Int}(K)$ thanks to \eqref{eq:Appxeps}  and $(x,u(x))\in \partial K$ so that $[(x,u(x)),(x,\eps)]\subset K$. This ensures $-\eps<u(x)<\eps$ using again \eqref{eq:Appxeps}, so that in particular $(x,u(x))\in B_{2\eps}(\widehat{x_0})$. Let now $x,y\in B^\eps$. As $(x,u(x))\in B_{2\eps}(\widehat{x_0})$ we must have  $C((x,u(x)),\xi,2\eps)\subset  \text{Int}(K)$ by the $2\eps$-cone property; but then, as $(y,u(y))\in\partial K$ we get $(y,u(y))\notin C((x,u(x)),\xi,2\eps)$, giving \[u(y)-u(x)\leq \frac{1}{\tan(\eps)}|y-x|\] Reversing the roles played by $x$ and $y$, we deduce in fact that $u_{|B^\eps}$ is $\tan(\eps)^{-1}$-Lipschitz. This finishes the proof of the first requirement. 

The construction of $u$ ensures that the second requirement is verified. As for the third let $\delta$ be such that $d(\omega,\partial B^\eps)\geq\delta>0$. Set $c:=\delta\tan(\eps)^{-1}$ and let $x\in B^\eps$ and $u(x)\leq t\leq u(x)+c$. As $x\in B^\eps$, we have $(x,\eps)\in  \text{Int}(K)$ and $(x,u(x))\in \partial K$ so that $[(x,u(x)),(x,\eps)]\subset K$, hence it suffices to show that $t< \eps$ to get the claim. As it holds that $|x-x_0|< \eps\tan(\eps)-\delta$, then recalling that $u_{|B^\eps}$ is $\tan(\eps)^{-1}$-Lipschitz we get \begin{equation*}\nonumber t\leq u(x)+c< \tan(\eps)^{-1}\times (\eps\tan(\eps)-\delta)+c=\eps\nonumber\end{equation*} This finishes the proof of the third point and hence the proof of the Proposition.
\end{proof}

\subsection{Proof of Proposition \ref{prop:hausdorff}}

\begin{proof}Let $(K_n)$ be a sequence of convex bodies verifying $K_n\subset D$ where $D\in\K^N$.
\begin{itemize}\item Let us first focus on
\begin{equation*} d_H(K_n,K)\rightarrow0 \Longleftrightarrow |K_n\Delta K|\rightarrow0\end{equation*}
The direct sense is proved in \cite[Proposition 2.4.3, (ii)]{BB}. As for the converse, suppose $|K_{n}\Delta K|\to 0$, and assume by contradiction that $(K_n)$ does not converge to $K$ for $d_H$. Up to extracting we can therefore suppose that there exists $\eps>0$ such that \begin{equation}\label{eq:HausVol} \forall n \in\N,\  d_H(K_n,K)\geq\eps\end{equation} Thanks to the Blaschke selection theorem  which states that $\{L \text{ compact convex of }\R^N, \ L\subset D\}$ is compact for the Hausdorff distance, up to further extraction there exists $K_{\infty}$ compact convex such that $K_n\rightarrow K_{\infty}$ for $d_H$.  Using again \cite[Proposition 2.4.3, (ii)]{BB} and since $|K_n\Delta K|\rightarrow0$ we must have $K_{\infty}=K$, contradicting \eqref{eq:HausVol}.

\item  We now assume that $d_H(K_n,K)\rightarrow0$, and let $C\in\K^N$ be such that $C\subset \text{Int}(K)$. We want to prove
\begin{equation*} C\subset K_n\;\; \text{ for large }\; n.\end{equation*}
 There exists $\eps>0$ such that $d(C,\partial K)\geq\eps$. Since $K_n\rightarrow K$ for $d_H$, we also have $\partial K_n\rightarrow \partial K$ for $d_H$ (see \cite[Lemma 1.8.1]{Sc}). This gives $d(C,\partial K_n)\geq\eps/2$ for large $n$. Let us assume to get a contradiction that we do not have $C\subset K_n$ for $n$ large enough. Up to extraction we can therefore suppose that $C\cap (\R^N\setminus K_n)\ne \emptyset$ for each $n$. But then $C \subset \R^N\setminus K_n$ thanks to the convexity of $C$, since otherwise there would exist $x\in C\cap \partial K_n$ which is in contradiction with $d(C,\partial K_n)\geq\eps/2$. This rewrites $K_n\subset \R^N\setminus C$ for each $n$,  yielding $K\subset \overline{\big(\R^N\setminus C\big)}$ at the limit. This contradicts the hypothesis Int$(C)\subset K$. \end{itemize}\end{proof}

\noindent{\bf Acknowledgements: }
The authors would like to thank Dorin Bucur and Guillaume Carlier for very helpful discussions about this work. The authors also thank Guillaume Carlier for providing a version of \cite{CCL}. This work was partially supported by the project ANR-18-CE40-0013 SHAPO financed by the French Agence Nationale de la Recherche (ANR).

\bibliographystyle{plain}
\bibliography{./bib_shape}

\end{document}